\documentclass[a4paper,11pt]{article}
\usepackage[utf8]{inputenc}
\usepackage[T1]{fontenc}

\usepackage{indentfirst}
\usepackage{enumitem}
\usepackage{amsmath}
\usepackage{amssymb,amsmath,amsthm}
\usepackage{ebproof}
\usepackage{aliascnt}
\usepackage{hyperref}
\usepackage[capitalise]{cleveref}
\usepackage{ragged2e}
\usepackage{adjustbox}
\usepackage [autostyle, english = american]{csquotes}
\MakeOuterQuote{"}

\usepackage[backend=biber]{biblatex}
\usepackage[affil-it]{authblk}

\usepackage{tikz-cd}
\usetikzlibrary{decorations.markings,intersections}
\tikzset{shorten <>/.style={shorten >=#1,shorten <=#1}}
\tikzset{every picture/.prefix code=\DisableQuotes}

\tikzset{%
scalearrow/.style n args={3}{
  decoration={
    markings,
    mark=at position (1-#1)/2*\pgfdecoratedpathlength
      with {\coordinate (#2);},
    mark=at position (1+#1)/2*\pgfdecoratedpathlength
      with {\coordinate (#3);},
    },
  postaction=decorate,
  }
}

\usepackage{quiver}
\usepackage[mathcal]{euscript}
\usepackage{ stmaryrd }
\usepackage{ dsfont }
\usepackage{upgreek}
\usepackage{ mathrsfs }
\usepackage{comment}

 \let\oldtheorem\newtheorem
\RenewDocumentCommand{\newtheorem}{s m o m O{}}{%
\IfBooleanTF{#1}%
{\oldtheorem{#2}{#4}}%
{\IfNoValueTF{#3}{\oldtheorem{#2}{#4}[#5]}%
{\newaliascnt{#2}{#3}%
\oldtheorem{#2}[#2]{#4}%
\aliascntresetthe{#2}}}}

\newtheorem{theorem}{Theorem}[section]
\newtheorem{proposition}[theorem]{Proposition}
\newtheorem{lemma}[theorem]{Lemma}
\newtheorem{corollary}[theorem]{Corollary}

\theoremstyle{definition}
\newtheorem{definition}{Definition}[section]

\theoremstyle{remark}
\newtheorem{remark}{Remark}[section]

\title{A notion of semi-cubical tribe}
\author{El Mehdi Cherradi}
\affil{IRIF - CNRS - Universit\'e Paris Cit\'e \\MINES ParisTech - Universit\'e PSL}
\date{}

\begin{document}

\maketitle

\begin{abstract}
The important notions our ideas revolve around are that of tribes, a class of categories aiming at modeling intensional type theory, and that of semi-cubical objects, for a category of cubes with symmetries and reversals. 
We introduce a general notion of $J$-tribes, tribes suitably enriched in presheaves over $J$, and $J$-frames in a tribe $\mathcal{T}$ as a category of $J$-shaped resolutions in $\mathcal{T}$, for a direct category $J$, and we construct the $J$-tribe of $J$-frames in $\mathcal{T}$. Similar notions of $R$-tribes and $R$-frames are introduced for $R$ a suitable generalized direct category, and corresponding properties are established thanks to a construction relating $R$ to a strictly direct category. In particular, our framework applies to semi-cubes, which is our main motivation, and yields an appropriate notion of a semi-cubical tribe.
\end{abstract}

\tableofcontents

\newpage 

\section*{Introduction}

\subsection*{Models of type theory, tribes and inverse diagrams}
Introduced by Joyal in \cite{joyal2017notes}, the notion of tribe provides a categorical framework for the semantics of intensional type theory: a tribe is essentially a category $\mathcal{T}$ equipped with a notion of fibrations, thought of as display maps, or context projections, which are stable under base change, and whose corresponding model of type theory supports identity types. Given such a tribe $\mathcal{T}$, one obtains a canonical comprehension category given by the codomain functor $$\mathbf{cod} :  \mathcal{T}^{\rightarrow_{\mathbf{fib}}} \to \mathcal{T},$$ where  $\mathcal{T}^{\rightarrow_{\mathbf{fib}}}$ is the full subcategory of the arrow category spanned by the fibrations.

Given a categorical model of type theory $\mathbf{T}$ in a broad sense, and a small category $I$ assumed to be inverse, the category of diagrams $\mathbf{T}^I$ is known to inherit enough features of $\mathbf{T}$ to also provide a model of a similar type theory, as studied in \cite{kapulkin2021homotopical} in the context of categories with attributes.
An interesting inverse category to consider is the opposite of the semi-simplex category $\Delta_\sharp$, whose objects are the natural numbers and whose maps are the injective monotone functions between them. Type theories that can be thought of as arising from (semi)-simplicial/cubical diagrams in a given base model, or at least having inbuilt dependencies on primitive shapes (simplices or cubes) have been considered recently in the literature as in \cite{cohen2016cubical} (cubical type theory) and \cite{riehl2017type} (simplicial type theory). One crucial feature of such "diagrammatic" models, for instance in the semi-simplicial case, is that they inherit a canonical enrichment in finite semi-simplicial sets: in particular, for every type $A$, one can form the type $A^{\Delta^1}$, where $\Delta^1$ plays the role of an interval object.

\subsection*{Path objects and cubical diagrams}

Another interesting direction is that of the category of semi-cubes, that is, the free monoidal category $\square_\sharp$ generated by two face maps $\delta_i : I^0 \to I^1$ with domain the monoidal unit. Its objects can be thought of as finite powers of an interval $I$ (cubes), and its maps are inclusions of sub-cubes. A key observation is that the mere existence of identity types already translates nearly into a cubical enrichment. Indeed, given a fibration category $\mathcal{F}$ and an object $x$ of $\mathcal{F}$, one may consider a path object for $x$, that is a factorization of the diagonal $x \to x \times x$ as a weak equivalence $x \to Px$ followed by a fibration $Px \to x \times x$. Iterating this construction, the family of path objects $P^n x$ defines a semi-cubical object in $\mathcal{F}$:
$$P^\bullet x :  \square_\sharp^{op} \to \mathcal{F}$$
Note that, in particular, we also get a semi-simplicial object in $\mathcal{F}$. 

However, this does not define a functor $\mathcal{F} \to \mathcal{F}^{\square_\sharp^{op}}$ (nor a functor $\mathcal{F} \to \mathcal{F}^{\Delta_\sharp^{op}}$) since the path objects are not functorial here.  A natural direction is hence to make these diagrams canonical, by introducing definitions of fibration categories/tribes enriched in a suitable way over semi-simplicial sets, and it is reasonable to expect the corresponding categories to be DK-equivalent to their unenriched counterparts.

\subsection*{Semi-simplicial tribes}

In Section 3 of \cite{ks2019internal}, the authors investigate such a notion of semi-simplicial tribe, based on the work of \cite{schwede2013p} in the setting of (co)fibration categories. Their aim is to provide a "replacement" of the category of tribes $\mathbf{Trb}$ that canonically enjoys the structure of a fibration category, unlike the whole category of tribes, which does not have path-objects in general (at least not canonically). The key feature enjoyed by semi-simplicial tribes is the existence of a canonical factorization of the diagonals $\mathcal{T} \to \mathcal{T} \times \mathcal{T}$ through tribes $P \mathcal{T}$ of Reedy fibrant homotopical spans in $\mathcal{T}$, defined by considering the mapping $x \mapsto x^{\Delta^1}$ (where $x^{\Delta^1}$ comes with two projections to $x$).

\subsection*{Semi-cubes with symmetries and reversals}

In the case of a tribe $\mathcal{T}$,  using the lifting property of the anodyne reflexivity maps $P^{n-1}x \to P^nx$ against the fibrations $P^n x \to P^{n-1}x  \times P^{n-1}x$, as in the diagram below,
\[\begin{tikzcd}[ampersand replacement=\&]
	{P^{n-1} x} \&\& {P^n x} \\
	\\
	{P^n x} \&\& {P^{n-1} x \times P^{n-1} x}
	\arrow["\sim"{description}, from=1-1, to=1-3]
	\arrow["\sim"{description}, from=1-1, to=3-1]
	\arrow["{<\pi_0,\pi_1>}", two heads, from=1-3, to=3-3]
	\arrow["h"{description}, dashed, from=3-1, to=1-3]
	\arrow["{<\pi_1,\pi_0>}"', two heads, from=3-1, to=3-3]
\end{tikzcd}\]
we can "lift" these semi-cubical objects, at the level of the homotopy category $\mathbf{Ho}(\mathcal{T})$, to diagrams over the opposite of a category of semi-cubes $\square_\sharp^s$ that also has symmetries (derived from an automorphism $\tau : I^1 \otimes I^1 \simeq I^1 \otimes I^1$ "permuting" the intervals in the monoidal product) and reversals (obtained from $r : I^1 \simeq I^1$ "reversing" the direction):
\[\begin{tikzcd}[ampersand replacement=\&]
	{\square_\sharp^{op}} \&\& {(\square_\sharp^s)^{op}} \\
	\\
	{\mathcal{T}} \&\& {\mathbf{Ho}(\mathcal{T})}
	\arrow[from=1-1, to=1-3]
	\arrow["{P^\bullet x}"', from=1-1, to=3-1]
	\arrow["{P^\bullet_s x}", from=1-3, to=3-3]
	\arrow[from=3-1, to=3-3]
\end{tikzcd}\]

\subsection*{Application}

The notion of semi-simplicial tribe is a key ingredient in the proof strategy of \cite{ks2019internal} to establish the DK-equivalence between the relative categories of tribes $\mathbf{Trb}$ and that of finitely complete quasicategories, $\mathbf{QCat}_{lex}$. The key point is that the relative category $\mathbf{sTrb}$ of semi-simplicial tribes enjoys a canonical fibration category structure and is directly provable to be DK-equivalent to $\mathbf{Trb}$, so one can use the former as a replacement for the latter in the setting of fibration categories (whose theory considerably simplifies the study of DK-equivalences).  However, the proof pattern does not adapt directly to $\pi$-tribes because the subcategory  $\mathbf{sTrb}_\pi$ of semi-simplicial tribes which are $\pi$-tribes does not canonically enjoy a fibration category structure anymore. An important idea is that cubes are better behaved than simplices with respect to $\Pi$-types: this has been the motivating idea for this work, and a key ingredient in the proof that the relative category $\mathbb{Trb}_\pi$ of $\pi$-tribes is DK-equivalent to that of locally cartesian closed quasicategories, $\mathbf{QCat}_{lcc}$ (the so-called internal language conjecture for locally cartesian closed  $(\infty,1)$-categories).

\section*{Outline}

The goal of this document is to adapt the work of Kapulkin and Szumiło \cite{ks2019internal} to semi-cubes. To do so, we give general definitions for $J$-tribes and $J$-frames in a tribe in the first section, for $J$ a direct category. We establish the main properties of $J$-frames in Section 2, yielding \cref{J_frames_th}. In Section 3, we recall a construction for generalized inverse diagrams in a tribe, investigated in \cite{cherradi2026generalized}. This allows us to introduce in Section 4 the notion of $R$-frames for $R$ a generalized direct category, and derive the expected properties. Section 5 then tackles the problem of the enrichment of $R$-tribes. In Section 6, we prove a DK-equivalence between a tribe $\mathcal{T}$ and the tribe of $R$-frames in $\mathcal{T}$. Section 7 establishes a suitable enrichment for $R$-frames, so that tribes of $R$-frames are $R$-tribes. Finally, Section 8 wraps up the article by applying the main results to the case where $R$ is the category $\square_\sharp^s$ of semi-cubes with symmetries and reversals.

\subsection*{Acknowledgments}

The author would like to thank the anonymous referees for detailed feedback on a draft of this document.

\section{ \texorpdfstring{$J$}{\textbf{J}}-tribes and \texorpdfstring{$J$}{\textbf{J}}-frames in a tribe}  

In this section, we adapt the definition of semi-simplicial tribe to work in a more general setting, notably when $J$ is a direct category endowed with a monoidal structure (and we write $\otimes :  J \times J \to J$ for the monoidal product). 
Hence, the category $\mathbf{Set}^{J^{op}}$ of presheaves on $J$ also comes with a monoidal structure: it is given by defining the geometric product of two presheaves $K$ and $L$ on $J$ from the coend formula below, which is just the Day convolution arising from the monoidal structure on $J$:
 
 $$K \otimes L := \int^{i,j \in J} K_i \times L_j \times J^{i\otimes j}$$

While the category of semi-cubes we consider in this article is not direct, we will see that it is possible to reduce the problem to one fitting this setting. 
We introduce a general notion of a $J$-tribe, in the spirit of \cite[Definitions 3.1 and 3.2]{ks2019internal}.
  
 \begin{definition}
 \label[definition]{semicubical_tribe}
 Consider a small category $J$. A $J$-tribe $\mathcal{T}$ is a tribe enriched over presheaves on $J$, with the monoidal structure given by the geometric product above, and admitting cotensors by finite presheaves (those presheaves that are canonically finite colimits of representable ones) such that the following properties hold:
  \begin{itemize}
   \item If $i : K \to L$ is a monomorphism between finite presheaves, and if $p : a \to b$ is a fibration in $\mathcal{T}$, then the gap map $$a^L \to a^K \times_{b^K} b^L$$
   is a fibration, and is moreover a trivial fibration whenever $p$ is a trivial fibration.
   
   \item If $f :  i \to j$ is a morphism in $J$, and $a$ is an object in $\mathcal{T}$, then $a^{Hom(-,f)}$ is a weak equivalence in $\mathcal{T}$ (where $Hom(-,f) : Hom(-,i) \to Hom(-,j)$ is the representable natural transformation represented by $f$).
   
   \item If $K$ is a finite presheaf on $J$, and if $p : a \to b$ is anodyne, then $p^K : a^K \to b^K$ is also anodyne. 
  \end{itemize}
 \end{definition}

 A convenient class of categories we wish to work with is that of Reedy categories. A good reference for the theory of such categories, that we will rely on, is the account of Riehl and Verity \cite{riehlreedy}.
 
 We recall the definition of elegant Reedy categories, introduced in \cite{bergner2013reedy}, as well as an important result:
 
 \begin{definition}
  A Reedy category $J$ is an elegant Reedy category when for every monomorphism $m : X \to Y$ in $\mathbf{Set}^{J^{op}}$ and every object $j$ of $J$, the relative latching map $L_j m$ is a monomorphism.
 \end{definition}
 
 Examples of elegant Reedy categories include the category $\Delta$ as well as all direct categories. In what follows, we fix an elegant Reedy category $J$.
 
 \begin{lemma}
 \label[lemma]{mono_reedy}
 The category of presheaves $\mathbf{Set}^{J^{op}}$ admits a cofibrantly generated weak factorization system whose left class is the class of monomorphisms. 
  
  Moreover, this class is generated by the border inclusions $$\partial J^x \to J^x$$
  where $J^x$ is the representable presheaf represented by $x$, and $\partial J^x$ is its subpresheaf given as the "latching" colimit (i.e., $\partial J^x \to J^x$ is the latching map at $x$ of the Yoneda embedding $J \to \mathbf{Set}^{J^{op}}$).
 \end{lemma}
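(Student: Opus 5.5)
We obtain the weak factorization system from the small object argument. Let $I = \{\partial J^x \to J^x \mid x \in J\}$. Since $\mathbf{Set}^{J^{op}}$ is locally presentable and $I$ is a set, the small object argument yields a cofibrantly generated weak factorization system $({}^{\pitchfork}(I^{\pitchfork}), I^{\pitchfork})$. The left class is the closure of $I$ under pushouts, transfinite compositions and retracts. It then remains to prove that this left class is exactly the class of monomorphisms.

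One inclusion is formal. Each $\partial J^x \to J^x$ is a monomorphism: $\partial J^x$ is by definition the subpresheaf of $J^x$ consisting of maps $y \to x$ that factor through a non-identity map of $J^-$ (its image in $J^x$). Monomorphisms in a presheaf topos are stable under pushout, transfinite composition and retract, since these are computed pointwise in $\mathbf{Set}$ and hold there. Hence every map in the left class is a monomorphism.

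For the converse, the plan is the standard skeletal filtration argument. Let $m : X \to Y$ be a monomorphism. We use the Reedy structure of $J$ (through its degree function) to write $m$ as a transfinite composite
$$X = Y_{-1} \to Y_0 \to Y_1 \to \cdots \to Y,$$
where $Y_n$ is the union of $X$ with the $n$-skeleton of $Y$. The key step is to show that each $Y_{n-1} \to Y_n$ is a pushout of a coproduct of maps $\partial J^x \to J^x$ with $\deg x = n$, indexed by the nondegenerate elements of $Y_x$ that do not lie in $X$. Following Riehl–Verity, this pushout square is obtained by evaluating the relative latching/skeleton description at degree $n$. The boundary $\partial J^x \times (Y_x \setminus \ldots)$ glues along the relative latching object $L_x m$, and the square is a pushout exactly when $L_x m \to Y_x$, equivalently the relative latching map, is a monomorphism.

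The main obstacle is this last point, and elegance is precisely what supplies it. By the definition of elegant Reedy category, $L_j m$ is a monomorphism for every monomorphism $m$. Hence $Y_x$ decomposes as the image of the relative latching object together with a complementary set of new cells. This gives the pushout-of-coproduct description, with injectivity ensuring that no unwanted identifications are made. The degenerate part, coming from $J^-$, is absorbed because elegance forces each element to be uniquely a degeneracy of a nondegenerate one. For direct $J$ this issue does not arise and the argument reduces to cell attachment along a degree filtration.

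Transfinite compositions of pushouts of coproducts of generators lie in the left class, so $m$ belongs to it. This establishes that the left class is exactly the class of monomorphisms and that it is generated by the border inclusions $\partial J^x \to J^x$, which proves the lemma.
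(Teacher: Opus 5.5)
Your proposal is correct and follows the same route as the paper, which proves the lemma by citing Riehl--Verity (Corollary 6.8 with Example 6.9); you have unpacked that argument. It combines the small object argument for the boundary inclusions with the relative skeletal filtration of a monomorphism $m$: elegance makes each relative latching map $\ell_x m$ injective, so the attaching map $\ell_x m \,\hat{\times}\, (\partial J^x \to J^x)$ splits as an identity plus a coproduct of copies of $\partial J^x \to J^x$.

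One slip: $\partial J^x$ consists of the maps into $x$ that factor through a non-identity map of the direct (degree-raising) part, i.e.\ through an object of lower degree, not through $J^-$, which you later rightly use for degeneracies. Also, once the cell-attachment step is phrased via this Leibniz product, the Eilenberg--Zilber uniqueness you invoke is not needed.
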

 
 \begin{proof}
  This is Corollary 6.8 (and Example 6.9) in \cite{riehlreedy}, by definition of elegant Reedy categories.
 \end{proof}

 \begin{lemma}
 \label[lemma]{skeletal_filtration}
  Any presheaf $K \in \mathbf{Set}^{J^{op}}$ can be decomposed as a (possibly transfinite) composite of inclusions
  $$\mathbf{sk}^0 K \to ... \to \mathbf{sk}^n K ...$$
  where the successive maps are obtained by pushouts:

\[\begin{tikzcd}[ampersand replacement=\&]
	{\amalg_{S_n}\partial J^x} \&\& {\amalg_{S_n} J^x} \\
	\\
	{\mathbf{sk}_{n-1} K} \&\& {\mathbf{sk}_{n} K}
	\arrow[from=1-1, to=1-3]
	\arrow[from=1-1, to=3-1]
	\arrow[from=1-3, to=3-3]
	\arrow[from=3-1, to=3-3]
	\arrow["\ulcorner"{anchor=center, pos=0.125, rotate=180}, draw=none, from=3-3, to=1-1]
\end{tikzcd}\]
  for $S_n$ the set of maps $J^x \to K$ with $\mathbf{deg} x = n$.
 \end{lemma}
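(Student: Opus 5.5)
We will use the standard skeletal filtration for presheaves on a Reedy category. Define $\mathbf{sk}_n K \subseteq K$ as the subpresheaf generated by all elements $J^x \to K$ with $\mathbf{deg}\, x \le n$. Equivalently, $\mathbf{sk}_n K$ is the left Kan extension along $J_{\le n} \hookrightarrow J$ of the restriction of $K$ to the full subcategory $J_{\le n}$ of objects of degree $\le n$. Then $K = \mathrm{colim}_n \mathbf{sk}_n K$, because every element of $K$ lies in some $\mathbf{sk}_n K$. This gives the (transfinite, indexed by the degree ordinal) composite of inclusions. It remains to identify each step $\mathbf{sk}_{n-1}K \to \mathbf{sk}_n K$ as the stated pushout.

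\textbf{Step 1: compute the pushout levelwise.} Evaluation at an object $y$ preserves colimits, so the pushout $P_n$ can be computed pointwise in $\mathbf{Set}$. For $\mathbf{deg}\, y < n$, the map $\partial J^x(y) \to J^x(y)$ is a bijection whenever $\mathbf{deg}\, x = n$: every map $y \to x$ from lower degree factors through a lower degree object, hence lies in the latching object. So $P_n(y) = \mathbf{sk}_{n-1}K(y) = \mathbf{sk}_n K(y)$ at these degrees.

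\textbf{Step 2: the comparison map.} There is a canonical map $P_n \to \mathbf{sk}_n K$. It is induced by the inclusion $\mathbf{sk}_{n-1}K \subseteq \mathbf{sk}_n K$ and by the maps $J^x \to K$ indexed by $S_n$, which land in $\mathbf{sk}_n K$. These agree on $\partial J^x$, since $\partial J^x$ is a colimit of representables of lower degree.

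\textbf{Step 3: bijectivity in all degrees.} We show the comparison map is bijective in every degree, by induction on degree or directly via the latching/matching description of left Kan extensions.

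\emph{Surjectivity.} Any element of $\mathbf{sk}_n K(y)$ has the form $\sigma\circ g$ with $\sigma\in S_{\le n}$. If $\mathbf{deg}\,\sigma$ is less than $n$, the element already lies in $\mathbf{sk}_{n-1}K$.

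\emph{Injectivity.} This is the crux and the main obstacle. Suppose $\sigma \circ g = \sigma' \circ g'$ with $\sigma,\sigma' \in S_n$ and $g, g'$ not in the latching part. Then these two elements must already be identified in $P_n$. Here we use the Eilenberg–Zilber type property that comes from elegance, in the form of \cref{mono_reedy}: the relative latching maps of monomorphisms are monomorphisms. Concretely, the sets $K(x)\setminus L_x K$ of nondegenerate elements at degree $n$ are exactly what is glued in. The equivalence relation on $\coprod_{S_n} J^x \setminus \partial J^x$ induced by $K$ is then the one already imposed in the pushout.

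\textbf{Fallback for Step 3.} Since \cref{mono_reedy} already cites Riehl--Verity, we may instead invoke their result directly. That result expresses $\mathbf{sk}_{n-1}K\to\mathbf{sk}_nK$ as the pushout of $\coprod (L_xK\to K_x)\,\hat\times\,(\partial J^x\to J^x)$ over $x$ of degree $n$. By elegance, $L_x K \to K_x$ is a monomorphism, so this pushout-product reduces to the pushout of $\partial J^x\to J^x$ along the nondegenerate elements. Taking the coproduct over all of $S_n$ instead of only the nondegenerate elements gives the same pushout: the extra summands (the degenerate $\sigma$) factor through $\partial J^x \to \mathbf{sk}_{n-1}K$, and gluing them along the pushout adds nothing new.
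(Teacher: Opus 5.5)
\textbf{The gap is at injectivity in Step 3, which you correctly flag as the crux, and the fallback does not close it: its last sentence is false.}

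Pushing out $\partial J^x\to J^x$ along \emph{any} map $\partial J^x\to\mathbf{sk}_{n-1}K$ adjoins the elements of $J^x\setminus\partial J^x$ freely. That a degenerate $\sigma$ factors through $\mathbf{sk}_{n-1}K$ changes nothing, because what is pushed out is $\partial J^x\to J^x$, not an identity.

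Concretely, take $J=\Delta$, $K=\Delta[0]$ and $n=1$. Then $\mathbf{sk}_0K=K$, and $S_1$ consists of the single degenerate map $\Delta[1]\to\Delta[0]$. The pushout is $\Delta[1]/\partial\Delta[1]$, not $\mathbf{sk}_1K=\Delta[0]$. Already at $[1]$, the new cell $\mathrm{id}_{[1]}$ and the degenerate $1$-simplex of $\mathbf{sk}_0K$ are distinct in the pushout but have the same image in $K$.

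So with $S_n$ the set of \emph{all} maps $J^x\to K$, the comparison $P_n\to\mathbf{sk}_nK$ is not injective as soon as $J$ has nontrivial degeneracies, and no refinement of your argument can prove it. Your sketch only sounds plausible because it tacitly switches to nondegenerate elements (the remark that $K(x)\setminus L_xK$ is what gets glued in), which contradicts your own choice of $S_n$. In other words, the lemma as worded is only correct for direct $J$, or with $S_n$ restricted to nondegenerate maps.

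\textbf{What does hold.} The correct version attaches only nondegenerate cells. This is what the paper's one-line citation of Riehl--Verity provides, and it is the first half of your fallback. Elegance applied to $\emptyset\to K$ makes $L_xK\to K(x)$ a monomorphism, with complement $N_x$ the set of nondegenerate elements. Hence $(L_xK\to K(x))\,\hat\times\,(\partial J^x\to J^x)$ is the coproduct of an identity with $\coprod_{N_x}(\partial J^x\to J^x)$. You should stop there rather than re-index over all of $S_n$.

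If you want your direct argument instead, injectivity for the nondegenerate version follows from the Eilenberg--Zilber unique decomposition enjoyed by elegant Reedy categories. Every element of $K(y)$ is uniquely $\sigma\circ g$ with $g\in J^-$ and $\sigma$ nondegenerate, and a map $g\in J^x(y)$ lies outside $\partial J^x$ exactly when $g\in J^-$. Hence the new elements $\sigma\circ g$ are pairwise distinct. None of them lies in $\mathbf{sk}_{n-1}K$, whose elements all have nondegenerate part of degree $<n$.

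Finally, consider $J$ direct, which is the only case used from \cref{cotensor_reedy_cubes} on. There $J^-$ contains only identities, $L_xK=\emptyset$, every map $J^x\to K$ is nondegenerate, and $J^x$ differs from $\partial J^x$ only by the element $\mathrm{id}_x$. So injectivity is immediate, and the statement holds verbatim with $S_n$ all maps.
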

 
\begin{proof}
  This is an instance of Proposition 6.3 of \cite{riehlreedy}.
\end{proof}
 
 We now suppose that $J$ is a direct category, so it makes sense to talk about fibrant $J$-diagrams in a tribe.
 
 \begin{definition}
  We define a $J$-frame in a tribe $\mathcal{T}$ to be a Reedy fibrant homotopical diagram
  $$J^{op} \to \mathcal{T}$$
  where the direct category $J$ is given the homotopical structure with all maps being weak equivalences. We write $J\text{-Fr}(\mathcal{T})$ for the category of frames in $\mathcal{T}$.
 \end{definition}
 
   Consider a frame $F : J^{op} \to \mathcal{T}$ and an object $z$ of $\mathcal{T}$. We write  $\mathcal{T}(z,F)$ for the presheaf defined as the composite:

\[\begin{tikzcd}[ampersand replacement=\&]
	{J^{op}} \&\& {\mathcal{T}} \&\& {\mathbf{Set}}
	\arrow["F", from=1-1, to=1-3]
	\arrow["{Hom(z,-)}", from=1-3, to=1-5]
\end{tikzcd}\]

\section{Properties of \texorpdfstring{$J$}{\textbf{J}}-frames}

The theory of $J$-frames is very similar to that of semi-simplicial frames, developed in  \cite{schwede2013p}, and which guides us here.

For $K$ a finite presheaf on $J$, we write $F^K$ for a representing object of the functor $\mathcal{T}^{op} \to \mathbf{Set}$ mapping $z$ to $Hom_{\mathbf{Set}^{J^{op}}}(K,\mathcal{T}(z,F))$, provided such an object exists (this is an unenriched weighted limit).

The following lemma is an analogue of Proposition 3.3 in \cite{schwede2013p}, which essentially relies on the existence of skeletal filtrations (\cref{skeletal_filtration}) to inductively build up the objects $F^K$.

\begin{lemma}
\label[lemma]{cotensor_reedy_cubes}
 Consider a tribe $\mathcal{T}$ and a $J$-frame $F$ in $\mathcal{T}$.
 For any finite presheaf $K \in  \mathbf{Set}^{J^{op}}$, $F^K$ exists in $\mathcal{T}$. Moreover, the functor $F \mapsto F^K$ takes any (Reedy) fibration (resp. trivial fibration) between frames to a fibration (resp. a trivial fibration), and it takes anodyne maps to anodyne maps.
\end{lemma}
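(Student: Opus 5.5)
We prove existence and the fibration property together by induction along the skeletal filtration of \cref{skeletal_filtration}, and then treat anodyne maps separately. Because $K$ is finite, its filtration consists of finitely many steps, each attaching finitely many cells. The weighted limit $F^{(-)}$ sends colimits of weights to limits whenever those limits exist. The pushout square of \cref{skeletal_filtration} therefore becomes a pullback
$$F^{\mathbf{sk}_n K} \cong F^{\mathbf{sk}_{n-1}K} \times_{\prod_{S_n} F^{\partial J^x}} \prod_{S_n} F^{J^x}.$$
By Yoneda, $F^{J^x} = F_x$, and $F^{\partial J^x}$ is by definition the matching object $M_x F$. Reedy fibrancy of $F$ says that $F_x \to M_x F$ is a fibration. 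We argue by induction on degree, and within a degree by induction on the number of cells. Since $\partial J^x$ is finite and has lower-degree cells only, this shows that $M_x F$ exists. Finite products and pullbacks along fibrations exist in a tribe, so the pullback above exists and $F^{\mathbf{sk}_n K} \to F^{\mathbf{sk}_{n-1}K}$ is a fibration. More generally, the same argument shows that $F^L \to F^K$ is a fibration for any monomorphism $K \to L$ of finite presheaves, since such a map is a finite cell complex of boundary inclusions by \cref{mono_reedy}. Hence every $F^K$ exists and is fibrant.

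For a Reedy fibration $p : F \to G$, we prove the stronger relative statement that for every monomorphism $K \to L$ of finite presheaves the gap map $F^L \to F^K \times_{G^K} G^L$ is a (trivial) fibration. Taking $K = \emptyset$ then gives the claim. It suffices to check this on each cell attachment, because the gap maps for a pushout of weights are pullbacks of the gap maps for $\partial J^x \to J^x$, and composites of (trivial) fibrations are (trivial) fibrations. For $\partial J^x \to J^x$ the gap map is exactly the relative matching map $F_x \to M_xF \times_{M_xG} G_x$. This map is a fibration by Reedy fibrancy of $p$, and a trivial fibration when $p$ is a Reedy trivial fibration, with the usual convention that trivial fibrations are those whose relative matching maps are trivial fibrations. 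Pullbacks and composites of trivial fibrations in a tribe remain trivial fibrations, which closes the argument.

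For anodyne maps, the tribe-theoretic definition means lifting against all fibrations, and I expect this to be the main obstacle. I would use the adjunction-like correspondence: a lifting problem of $f^K : F^K \to G^K$ against a fibration $q : a \to b$ in $\mathcal{T}$ corresponds to a lifting problem in the frame category. The plan is to show that $f : F \to G$ anodyne in $J\text{-Fr}(\mathcal{T})$ implies that each $f^K$ is anodyne, by induction on the skeleton. The cleaner route is to use that anodyne maps in a tribe are exactly the homotopy equivalences, or weak equivalences, between fibrant objects that have a section-type lifting, i.e.\ that anodyne means weak equivalence with respect to the fibrant structure.

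Concretely, I would first show that $F \mapsto F^K$ preserves weak equivalences. By the pullback description, with every leg a fibration between fibrant objects, this follows from the gluing lemma for pullbacks along fibrations in a tribe, applied inductively together with $F_x \mapsto$ levelwise equivalences. Then I would show that $f^K$ is anodyne by factoring it as an anodyne map followed by a fibration $q$. That fibration is trivial by two-out-of-three, and it admits a section compatible with $f^K$ using the anodyne map $f$ itself: $f$ lifts against the Reedy fibration obtained by pulling $q$ back through the cotensor, which requires $F^K$ to be right-adjoint-like in frames. Consequently $f^K$ is a retract of an anodyne map, hence anodyne. Making that last adjunction precise (the right adjoint $a \mapsto$ frame $a^{(-)}$ may not exist in a bare tribe) is where the real work lies. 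If it fails, I would instead induct over cells, using that the anodyne map $f$ induces a map on matching objects whose pullback-corner is again anodyne in a tribe, since anodyne maps are stable under pullback along fibrations.
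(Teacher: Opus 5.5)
\textbf{Verdict.} Your treatment of existence and of (trivial) fibrations is correct and is the paper's argument. The paper also builds $F^K$ from the skeletal pullbacks along $\prod_{S_n}F_x\to\prod_{S_n}M_xF$ and handles fibrations by factoring through those pullbacks. Your relative gap-map formulation is what the paper proves separately as \cref{cotensor_reedy_cubes_gap}. The genuine gap is the anodyne clause.

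\textbf{Why your main route fails.} Anodyne maps are not the weak equivalences of the tribe. An anodyne fibration lifts against itself and is therefore an isomorphism, so a non-invertible trivial fibration such as $Px\to x$ is a weak equivalence that is not anodyne. Finding a section $s$ of $q$ with $s\circ f^K=i$ says precisely that $f^K$ is a retract of $i$, i.e.\ that $f^K$ is anodyne, so two-out-of-three gives you nothing. Nor can the lifting problem of $f^K$ against $q$ be transposed into one for $f$. The functor $F\mapsto F^K$ is a weighted limit, so it behaves as a right adjoint; its would-be adjoint is the copower $a\mapsto K\odot a$ on the left. Transposition along such an adjunction transports right lifting properties: it explains why $(-)^K$ preserves fibrations, not why it preserves anodyne maps. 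The right adjoint you would need does not exist in general.

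\textbf{What the fallback is missing.} Your fallback, cell-by-cell induction, is the route the paper takes; the paper dispatches it with the same one-line remark. However, stability of anodyne maps under pullback along fibrations covers only one of the two factors. At a cell attachment, $f^{\mathbf{sk}_nK}$ is the composite of two maps:
\begin{itemize}
\item[(a)] the base change of $\prod_{S_n}h_x$ along $F^{\mathbf{sk}_{n-1}K}\to\prod_{S_n}M_xF$, which is not a fibration, where $h_x:F_x\to G_x\times_{M_xG}M_xF$ is the relative matching map;
\item[(b)] the base change of $f^{\mathbf{sk}_{n-1}K}$ along a fibration.
\end{itemize}
Only (b) is handled by your remark.

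For (a) you need two further facts.
\begin{itemize}
\item[(i)] The map $h_x$ is anodyne. The map $g:G_x\times_{M_xG}M_xF\to G_x$ is anodyne, as the base change of $M_xf=f^{\partial J^x}$ (anodyne by induction) along the fibration $G_x\to M_xG$. Since every object is fibrant, $g$ has a retraction $r$. Then $h_x$ is a retract of the anodyne map $gh_x=f_x$, using that anodyne maps of frames are levelwise anodyne.
\item[(ii)] Base change along an arbitrary map $u:A\to B$ preserves anodyne maps between fibrations over $B$ (Joyal). To see this, factor $u$ as an anodyne $j$ followed by a fibration. Pulling back along the fibration is the tribe axiom. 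For $j^*$ and an anodyne $k:Z\to W$ over the codomain of $j$, the maps $j^*Z\to Z$ and $j^*W\to W$ are anodyne. Hence, by the retraction trick of (i), $j^*k$ is a retract of the anodyne composite $j^*Z\to Z\xrightarrow{k}W$.
\end{itemize}
With (i), (ii), and closure of anodyne maps under composition and finite products, the induction closes.
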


\begin{proof}
 The existence of $F^K$ is tautological when $K$ is the representable $J^x$ for $x \in J$ (the representing object is $F(x)$). This is also true for the boundaries $\partial J^x$, as, by definition of Reedy fibrancy, the matching object $M_x F$ exists. In this case, the stated property of the functor $F \mapsto F^K$ follows from a general result: in a fibration category $\mathcal{F}$, a Reedy fibration $X \to Y$ in $\mathcal{F}^{J^{op}}$ induces a pointwise fibration $X_c \to Y_c$ and a fibration between the matching objects $M_cX \to M_cY$ at any object $c \in J$, as proved in \cite[Corollary 9.3.6 (2)]{radulescu2006cofibrations}.

The general case follows by induction on the dimension of $K$, taking advantage of the existence of a skeletal filtration for $K$ (\cref{skeletal_filtration}).
Explicitly, when $K$ is of dimension $0$ (i.e., when it has only cells of degree $0$), the result is trivial: the representing object is the terminal one. Assuming that the result holds for all dimensions up to $n-1$, we can form the following pullback square,
\[\begin{tikzcd}[ampersand replacement=\&]
	P \&\& {\Pi_{S_n}F^{J^x}} \\
	\\
	{F^{\mathbf{sk}_{n-1} K}} \&\& {\Pi_{S_n}F^{\partial J^x}}
	\arrow[from=1-1, to=1-3]
	\arrow[from=1-1, to=3-1]
	\arrow["\ulcorner"{anchor=center, pos=0.125}, draw=none, from=1-1, to=3-3]
	\arrow[from=1-3, to=3-3]
	\arrow[from=3-1, to=3-3]
\end{tikzcd}\]
where $S_n$ is the set of cells $J^x \to K$ of degree $n$, since the vertical map on the right is a fibration by the Reedy fibrancy assumption on $F$. The object $P$ is then the representing object $F^K$ we were looking for.

If $p : F \to F'$ is a fibration of $J$-frames, the induced map between the pullbacks in the following diagram
\[\begin{tikzcd}[ampersand replacement=\&]
	{F^K} \&\& {\Pi_{S_n}F^{J^x}} \\
	\& {F'^K} \&\& {\Pi_{S_n}F'^{ J^x}} \\
	{F^{\mathbf{sk}_{n-1} K}} \&\& {\Pi_{S_n}F^{\partial J^x}} \\
	\& {F'^{\mathbf{sk}_{n-1} K}} \&\& {\Pi_{S_n}F'^{\partial J^x}}
	\arrow[from=1-1, to=1-3]
	\arrow[dashed, from=1-1, to=2-2]
	\arrow[from=1-1, to=3-1]
	\arrow["\ulcorner"{anchor=center, pos=0.125}, draw=none, from=1-1, to=3-3]
	\arrow[from=1-3, to=2-4]
	\arrow[from=1-3, to=3-3]
	\arrow[from=2-2, to=2-4]
	\arrow[from=2-2, to=4-2]
	\arrow["\ulcorner"{anchor=center, pos=0.125}, draw=none, from=2-2, to=4-4]
	\arrow[from=2-4, to=4-4]
	\arrow[from=3-1, to=3-3]
	\arrow[from=3-1, to=4-2]
	\arrow[from=3-3, to=4-4]
	\arrow[from=4-2, to=4-4]
\end{tikzcd}\]
factors as $$F^K \simeq \Pi_{S_n}F^{J^x} \times_{\Pi_{S_n}F^{\partial J^x}} F^{\mathbf{sk}_{n-1} K} \to \Pi_{S_n}F'^{J^x} \times_{\Pi_{S_n}F'^{\partial J^x}} F^{\mathbf{sk}_{n-1} K}$$

followed by $$\Pi_{S_n}F'^{J^x} \times_{\Pi_{S_n}F'^{\partial J^x}} F^{\mathbf{sk}_{n-1} K} \to \Pi_{S_n}F'^{J^x} \times_{\Pi_{S_n}F'^{\partial J^x}} F'^{\mathbf{sk}_{n-1} K} \simeq F'^K $$

where the first map is a fibration as a base change of a finite product of the fibration $F^{J^x} \to F'^{J^x} \times_{F'^{\partial J^x}} F^{\partial J^x}$ (by the assumption that $p$ is a Reedy fibration), and the second map is a fibration, being a base change of the fibration $p^{\mathbf{sk}_{n-1} K}$ (by our inductive hypothesis). Observe that these two maps are also weak equivalences whenever $p$ is one (by induction or by the characterization of Reedy trivial fibration). The argument applies to anodyne maps $w :  F \to F'$ since pullbacks of anodyne maps along fibrations are anodyne in a tribe.
\end{proof}

In this context, we can derive a pullback-cotensor property:

 \begin{lemma}
 \label[lemma]{cotensor_reedy_cubes_gap}
  For any monomorphism $i : K \to L$ between finite presheaves on $J$, and any fibration $p : F \to F'$ between $J$-frames, the gap map in the diagram below

\[\begin{tikzcd}[ampersand replacement=\&]
	{F^L} \\
	\& {F'^L \times_{F'^K} F^K} \&\& {F^K} \\
	\\
	\& {F'^L} \&\& {F'^K}
	\arrow[dashed, from=1-1, to=2-2]
	\arrow[curve={height=-12pt}, from=1-1, to=2-4]
	\arrow[curve={height=12pt}, from=1-1, to=4-2]
	\arrow[from=2-2, to=2-4]
	\arrow[from=2-2, to=4-2]
	\arrow["\ulcorner"{anchor=center, pos=0.125}, draw=none, from=2-2, to=4-4]
	\arrow[from=2-4, to=4-4]
	\arrow[from=4-2, to=4-4]
\end{tikzcd}\]
  is a fibration in $\mathcal{T}$, that is moreover a trivial fibration whenever $p$ is so.
 \end{lemma}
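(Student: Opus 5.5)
We reduce to cell attachments, exactly as in the proof of \cref{cotensor_reedy_cubes}. Since $K$ and $L$ are finite and $i$ is a monomorphism, $L$ is obtained from $K$ by finitely many pushouts of boundary inclusions $\partial J^x \to J^x$. This follows from \cref{mono_reedy} and from the relative version of the skeletal filtration of \cref{skeletal_filtration}: filter $L$ by $K \cup \mathbf{sk}_n L$ and attach the non-degenerate cells of $L$ not in $K$, degree by degree. Since $J$ is direct, every cell is non-degenerate and $\partial J^x\to J^x$ is a mono, so this relative filtration is a finite composite $K = L_0 \to L_1 \to \dots \to L_m = L$. Each $L_{k-1}\to L_k$ is a pushout of $\amalg_{S}\partial J^x \to \amalg_S J^x$ along a map to $L_{k-1}$.

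Next we check that the class of monomorphisms $i$ for which the gap map $\hat p^{i}: F^L \to F'^L\times_{F'^K}F^K$ is a (trivial) fibration has the needed closure properties.

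\emph{Composition.} For $K\to L\to M$, the gap map for $K\to M$ factors as the gap map for $L \to M$, followed by a base change of the gap map for $K\to L$ along $F'^M\times_{F'^K}F^K \to F'^L\times_{F'^K}F^K$. This is a standard pasting of pullbacks.

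\emph{Pushouts.} Since $(-)^{(-)}$ turns colimits of presheaves into limits (by the representing property, $F^{L'\cup_{K'}K} \cong F^{L'}\times_{F^{K'}}F^{K}$), the gap map of a pushout $K\to L$ of $K'\to L'$ is a base change of the gap map of $K'\to L'$.

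\emph{Finite coproducts.} These give finite products of gap maps.

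Fibrations and trivial fibrations in a tribe are stable under composition, base change and finite products. So it suffices to treat the generators $\partial J^x \to J^x$.

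For a generator, the gap map is $F_x \to F'_x \times_{M_xF'} M_xF$, using $F^{J^x}=F_x$ and $F^{\partial J^x}=M_xF$. This is precisely the relative matching map of $p$ at $x$. It is a fibration because $p$ is a Reedy fibration. When $p$ is a trivial Reedy fibration it is a trivial fibration, by the characterization of Reedy trivial fibrations already invoked in the proof of \cref{cotensor_reedy_cubes}.

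The main obstacle is the bookkeeping in the first step. We must make sure that the relative cell decomposition really consists of finitely many pushouts of boundary inclusions, and that each intermediate presheaf is finite, so that \cref{cotensor_reedy_cubes} guarantees all the cotensors involved exist. We also need the natural isomorphisms identifying cotensors of pushouts with pullbacks, which hold by the universal property defining $F^K$. Directness of $J$ handles the finiteness, since each cell has finitely many faces, and the rest is formal.
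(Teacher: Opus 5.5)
Your proof is correct and follows essentially the same route as the paper. The paper inducts on the number of cells of $L$ outside $K$, attaching one cell $J^x$ at a time and factoring the gap map through $F'^L\times_{F'^{L'}}F^{L'}$ exactly as in your composition step, with the first factor a base change of the relative matching map $F_x \to F'_x\times_{M_xF'}M_xF$; your version instead attaches all cells of a given degree at once via the relative skeletal filtration and phrases the induction as closure under composition, pushout and finite coproduct, which is a tidier packaging of the same argument.
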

 
 \begin{proof}
  This is proved following the pattern of the proof for Proposition 3.5 of \cite{schwede2013p}. First note that $F^K \to F'^K$ is a fibration by \cref{cotensor_reedy_cubes}, so the pullback indeed exists in the tribe $\mathcal{T}$. We proceed by induction on the number of cells in $L$ that are not in the image of $i: K \to L$. If this number is $0$, it means that the monomorphism $i$ is surjective in every dimension, so that $i$ is in fact an isomorphism. Then, the pullback is isomorphic to $F^L$, and the gap map is an isomorphism (hence a fibration). If the number is strictly positive, we can choose a factorization $j \circ i' : K \to L' \to L$ of $i$ such that $K \to L'$ is a monomorphism and $L'$ has one cell fewer than $L$ (say, $J^x \to L$). We observe that the gap map $$F^L \to F'^L \times_{F'^K} F^K$$ factors as shown in the diagram below,

  \adjustbox{scale=0.8}{
  \begin{tikzcd}[ampersand replacement=\&]
	{F^{J^x}} \&\& {F^L} \\
	{F'^{J^x} \times_{F'^{\partial J^x}} F^{\partial J^x}} \&\& {F'^L \times_{F'^{L'}} F^{L'}} \&\& {F^{L'}} \\
	\&\& {F'^L \times_{F'^K} F^K} \&\& {F'^{L'} \times_{F'^K} F^K} \&\& {F^K} \\
	\\
	\&\& {F'^L} \&\& {F'^{L'}} \&\& {F'^K}
	\arrow[two heads, from=1-1, to=2-1]
	\arrow[from=1-3, to=1-1]
	\arrow["\ulcorner"{anchor=center, pos=0.125, rotate=-135}, draw=none, from=1-3, to=2-1]
	\arrow[two heads, from=1-3, to=2-3]
	\arrow[from=1-3, to=2-5]
	\arrow[from=2-3, to=2-1]
	\arrow[from=2-3, to=2-5]
	\arrow[two heads, from=2-3, to=3-3]
	\arrow["\ulcorner"{anchor=center, pos=0.125, rotate=45}, draw=none, from=2-3, to=3-5]
	\arrow[two heads, from=2-5, to=3-5]
	\arrow[from=2-5, to=3-7]
	\arrow[from=3-3, to=3-5]
	\arrow[two heads, from=3-3, to=5-3]
	\arrow["\ulcorner"{anchor=center, pos=0.125}, draw=none, from=3-3, to=5-5]
	\arrow[from=3-5, to=3-7]
	\arrow[two heads, from=3-5, to=5-5]
	\arrow["\ulcorner"{anchor=center, pos=0.125}, draw=none, from=3-5, to=5-7]
	\arrow[two heads, from=3-7, to=5-7]
	\arrow[from=5-3, to=5-5]
	\arrow[from=5-5, to=5-7]
\end{tikzcd}
}

where $$F^{J^x} \to F'^{J^x} \times_{F'^{\partial J^x}} F^{\partial J^x}$$ is a fibration by Reedy fibrancy of $F \to F'$, and $$F^{L'} \to F'^{L'} \times_{F'^K} F^K$$ is a fibration by our inductive hypothesis.

If moreover $p$ is a trivial fibration, both of the previous maps are trivial fibrations (by characterization of Reedy trivial fibrations and by the inductive hypothesis respectively), so we can conclude.
\end{proof}
 
 As in the semi-simplicial case (\cite{ks2019internal}), we define a "cotensor" $K \triangleright F$ of a frame $F$ by a presheaf $K \in \mathbf{Set}^{J^{op}}$ by the formula $$(K \triangleright F)_x := F^{J^x \otimes K}$$
 and define an enrichment by $$\underline{Hom}_{\text{Fr}(\mathcal{T})}(F,F')_x : = Hom(F,J^x \triangleright F')$$
 
 We can now state the expected key theorem about categories of $J$-frames:
 
 \begin{theorem}
 \label[theorem]{J_frames_th}
  The category $J\text{-Fr}(\mathcal{T})$ of frames in $\mathcal{T}$ is enriched over $\mathbf{Set}^{J^{op}}$ and admits cotensors by finite presheaves on $J$. Moreover, the cotensors satisfy the required properties for $J\text{-Fr}(\mathcal{T})$ to be a $J$-tribe.
 \end{theorem}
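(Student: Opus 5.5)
We will follow the semi-simplicial pattern of \cite{ks2019internal} and \cite{schwede2013p}, doing everything levelwise in $x \in J$ and reducing each claim to \cref{cotensor_reedy_cubes} and \cref{cotensor_reedy_cubes_gap}.

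\emph{Well-definedness of $K \triangleright F$.} For $K$ finite, each $J^x \otimes K$ is finite, since Day convolution preserves colimits in each variable and $J^x \otimes J^y \cong J^{x\otimes y}$. Hence $(K \triangleright F)_x = F^{J^x\otimes K}$ exists by \cref{cotensor_reedy_cubes}, and it is functorial in $x$.

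\emph{Frame structure of $K \triangleright F$.} The key identification is that the matching object of $K\triangleright F$ at $x$ is $F^{\partial J^x \otimes K}$. This holds because $(-)\otimes K$ preserves the latching colimit and $F^{(-)}$ turns colimits into limits. We then need $\partial J^x\otimes K \to J^x\otimes K$ to be a monomorphism. This should follow from elegance/directness: monos are generated by boundary inclusions (\cref{mono_reedy}), and the pushout-product of two boundary inclusions $\partial J^x \to J^x$, $\partial J^y\to J^y$ should again be a relative cell complex of boundary inclusions. We expect to check this using the monoidal structure on $J$, essentially requiring that $\otimes$ respects degrees and latching objects. This is the main obstacle, and the proof may need extra hypotheses on the monoidal structure here.

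Granting this, \cref{cotensor_reedy_cubes_gap} (applied with $F' = 1$) gives Reedy fibrancy. Homotopicality follows because each map $F^{J^y\otimes K} \to F^{J^x\otimes K}$ is, by two-out-of-three, compared with $F^K$ via the maps induced by $J^x \to J^{\mathbf 1}$, where $\mathbf 1$ is the unit. Those maps are weak equivalences, since $F$ is homotopical and a skeletal induction over $K$ (\cref{skeletal_filtration}) shows that a levelwise weak equivalence of fibrant data induces weak equivalences on iterated pullbacks along fibrations.

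\emph{Enrichment.} Define $\underline{Hom}(F,F')_x := Hom(F, J^x\triangleright F')$. Composition comes from the canonical maps $J^x\triangleright(J^y\triangleright F'') \cong J^{x\otimes y}\triangleright F''$, which are instances of $F^{A\otimes B}$ associativity. The Yoneda-style isomorphism $Hom(L,\underline{Hom}(F,F'))\cong Hom(F, L\triangleright F')$ for finite $L$ shows that $K\triangleright F'$ is the enriched cotensor.

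\emph{The $J$-tribe axioms.} Each axiom is checked levelwise, together with matching objects.
\begin{itemize}
\item For the first axiom, the Reedy condition for $L\triangleright F\to (K\triangleright F)\times_{K\triangleright F'}(L\triangleright F')$ at $x$ is the gap map for the mono $(J^x\otimes K)\cup(\partial J^x\otimes L)\to J^x\otimes L$. This mono is handled by the same pushout-product lemma, and \cref{cotensor_reedy_cubes_gap} then gives a (trivial) fibration.
\item For the second axiom, the map induced by $Hom(-,f)$ is levelwise a weak equivalence by the homotopicality argument above.
\item For the third axiom, anodyne maps are preserved by \cref{cotensor_reedy_cubes}, provided we know that anodyne maps of frames are levelwise controlled. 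We would argue via the characterization that anodyne means having the left lifting property against fibrations, transported through the cotensor adjunction using the first axiom.
\end{itemize}
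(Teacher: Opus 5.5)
Your strategy is the paper's: its proof simply appeals to \cref{cotensor_reedy_cubes}, \cref{cotensor_reedy_cubes_gap} and homotopicality of frames, by analogy with the semi-simplicial case. The step you defer is a genuine gap, which the paper's proof also passes over silently, and it cannot be closed from directness or elegance of $J$ alone. Every application of \cref{cotensor_reedy_cubes_gap} in your levelwise reduction needs a pushout-product under Day convolution to be a monomorphism; for instance the first axiom needs $(J^x \otimes K) \cup_{\partial J^x \otimes K} (\partial J^x \otimes L) \to J^x \otimes L$. The same holds for Reedy fibrancy of $K \triangleright F$ and for the pullbacks in your homotopicality induction.

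Here is a counterexample. Take $J = \{0 < 1\}$ with $\otimes = \max$ and unit $0$, which is a monoidal direct category. Then $J^x \otimes J^y \cong J^{\max(x,y)}$ and $\partial J^1 = J^0$, so the pushout-product of $\partial J^1 \to J^1$ with itself is the fold map $J^1 \sqcup_{J^0} J^1 \to J^1$, which is two-to-one in degree $1$. Concretely, for a frame $a$, $J^1 \triangleright a$ is constant at $a(1)$ and $\partial J^1 \triangleright a = a$. The Reedy condition at $1$ for $J^1 \triangleright a \to a$ is then the diagonal $a(1) \to a(1) \times_{a(0)} a(1)$. This is not a fibration in general: in Kan complexes, take $a(0) = *$ and $a(1)$ the nerve of the contractible groupoid on two objects. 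So the first $J$-tribe axiom fails. You must add, as a hypothesis on $(J,\otimes)$, that Day convolution sends pairs of monomorphisms to monomorphic pushout-products. By the saturation argument you sketch, together with \cref{mono_reedy}, it suffices to check this on pairs of boundary inclusions $\partial J^x \to J^x$, $\partial J^y \to J^y$. With that hypothesis and the two repairs below, your outline is complete.

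First, for homotopicality, a map $x \to \mathbf{1}$ typically does not exist in a direct category, since the unit usually has minimal degree (as $I^0$ does for cubes). A map $\mathbf{1} \to x$ need not exist either. Drop the detour through $F^K$ and run the skeletal induction directly on $F^{J^y \otimes K} \to F^{J^x \otimes K}$; its base case $K = J^z$ is the weak equivalence $F(f \otimes z)$.

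Second, for the third axiom, the cotensor adjunction only describes maps into $K \triangleright (-)$. It therefore cannot transpose a lifting problem whose left leg is $K \triangleright a \to K \triangleright b$. Instead, apply \cref{cotensor_reedy_cubes} to the finite presheaves $J^x \otimes K$ to see that this map is levelwise anodyne. A levelwise anodyne map between Reedy fibrant diagrams lifts against any Reedy fibration $X \to Y$ by induction on degree: at $x$, lift against the relative matching map $X_x \to Y_x \times_{M_x Y} M_x X$. This is the fact invoked in the proof of \cref{R_tribe}.
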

 
 \begin{proof} 
 With the definition above, the proof follows by analogy with the semi-simplicial case (Theorem 3.7 in \cite{ks2019internal}). Explicitly, the properties expected from the cotensor hold by \cref{cotensor_reedy_cubes}, \cref{cotensor_reedy_cubes_gap}, and by the fact that a frame is a homotopical diagram.
 \end{proof}
 
 There is a canonical frame construction:
 
 \begin{corollary}
 \label[corollary]{jcan_frame}
 If $\mathcal{T}$ is a $J$-tribe, then there is a "canonical frame" tribe morphism $$c : \mathcal{T} \to J\text{-Fr}(\mathcal{T})$$ mapping $x \in \mathcal{T}$ to $i \mapsto x^{J^i}$. 
\end{corollary}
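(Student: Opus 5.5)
We need to show three things: that $c(x)$ is a $J$-frame, that $c$ is a functor, and that $c$ is a tribe morphism.

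\emph{Construction and functoriality.} For $x\in\mathcal{T}$, the assignment $i\mapsto x^{J^i}$ is contravariant in $i$ via the maps $x^{Hom(-,f)}$, because cotensoring is contravariant in the weight. This gives a diagram $c(x):J^{op}\to\mathcal{T}$. A map $x\to y$ induces maps $x^{J^i}\to y^{J^i}$ natural in $i$, so $c$ is a functor.

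\emph{$c(x)$ is a frame.} Homotopicality is the second axiom of a $J$-tribe: each $x^{Hom(-,f)}$ is a weak equivalence.

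For Reedy fibrancy, I identify the matching object. Since cotensors convert colimits of weights into limits, $M_i\,c(x)=\lim_{\partial(i\downarrow J)} x^{J^{j}}\cong x^{\partial J^i}$. The matching map is then $x^{J^i}\to x^{\partial J^i}$. This is a fibration by the first axiom, applied to the mono $\partial J^i\to J^i$ of finite presheaves and to the fibration $x\to 1$, using $1^K=1$.

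The main point to check is that $\partial J^i$ is finite, so that the cotensor exists and agrees with the weighted limit. For a direct $J$ (with finite latching categories, as implicitly assumed throughout, e.g.\ in \cref{cotensor_reedy_cubes}), $\partial J^i$ is a finite colimit of representables. One must also check that the enriched cotensor represents the unenriched weighted limit $Hom(K,\mathcal{T}(z,c(x)))$. This holds because the underlying set of $\underline{Hom}(z,x)^K$ is $Hom(K,\underline{Hom}(z,x))$, and $\underline{Hom}(z,x)_i=Hom(z,x^{J^i})=\mathcal{T}(z,c(x))_i$ by Yoneda. Consequently $c(x)^K\cong x^K$ for every finite $K$.

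\emph{$c$ is a tribe morphism.} We need four properties.

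\begin{itemize}
\item \emph{Terminal object.} $c$ preserves it, since $1^{J^i}=1$.
\item \emph{Fibrations.} Let $p:a\to b$ be a fibration. Using $c(a)^{K}\cong a^K$, the relative matching map of $c(p)$ at $i$ is $a^{J^i}\to a^{\partial J^i}\times_{b^{\partial J^i}} b^{J^i}$. This is a fibration by the first axiom, so $c(p)$ is a Reedy fibration.
\item \emph{Pullbacks along fibrations.} Cotensoring $x\mapsto x^{J^i}$ is a right adjoint in the enriched sense and preserves existing limits. Pullbacks of frames along Reedy fibrations are computed pointwise, so $c$ preserves them.
\item \emph{Anodyne maps.} By the third axiom, each $a^{J^i}\to b^{J^i}$ is anodyne. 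Anodyne maps in $J\text{-Fr}(\mathcal{T})$ are weak equivalences with the left lifting property against fibrations. Levelwise weak equivalence gives the first part. For the lifting property, I would transpose a lifting problem of $c(a)\to c(b)$ against a frame fibration $F\to F'$ into a lifting problem in $\mathcal{T}$. If that is awkward, I would instead use the characterisation, presumably established alongside \cref{J_frames_th}, that anodyne maps of frames are the levelwise homotopy equivalences compatible with the Reedy structure, and check it directly.
\end{itemize}

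I expect this last step, the preservation of anodyne maps, to be the main obstacle.
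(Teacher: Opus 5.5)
Your proposal follows the paper's proof for the frame condition, fibrations and limits: the paper also invokes the three axioms of \cref{semicubical_tribe} in the same way. Your check that $c(x)^K\cong x^K$, so that the matching object of $c(x)$ at $i$ really is $x^{\partial J^i}$, is a detail the paper leaves implicit.

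The gap is the step you flag yourself: you never show that $c$ preserves anodyne maps, and neither suggested route works. Transposition is not available. The lifting square consists of maps \emph{out of} $c(a)$ and $c(b)$, and nothing gives $c$ a right adjoint to transpose along; being built from cotensors, $c$ behaves like a right adjoint itself. Your fallback ``characterisation'' of anodyne maps of frames as levelwise homotopy equivalences compatible with the Reedy structure is not established in the paper. It is also false as stated: Reedy trivial fibrations are such maps, but a trivial fibration is anodyne only when it is an isomorphism.

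The missing ingredient is that a levelwise anodyne map between Reedy fibrant $J^{op}$-diagrams has the left lifting property against Reedy fibrations. This is what the paper's one-line appeal to the third axiom relies on; it is the fact quoted from \cite[Lemma 2.22]{ks2019internal} in the proof of \cref{R_tribe}. It follows by induction on degree. Take a square from $c(a)\to c(b)$ to a Reedy fibration $F\to F'$, and suppose the lift is already built in all degrees below $\deg i$. The lower components induce a map $b^{J^i}\to M_iF$. The component at $i$ is then a diagonal filler for the square whose left side is $a^{J^i}\to b^{J^i}$, anodyne by the third axiom. Its right side is the relative matching map $F_i\to F'_i\times_{M_iF'}M_iF$, a fibration because $F\to F'$ is a Reedy fibration. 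Compatibility with the lower components is built into the bottom-right corner of that square. With this lemma, the third axiom gives preservation of anodyne maps immediately, exactly as in the paper.
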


\begin{proof}
 As noted in \cref{mono_reedy}, the border inclusions $\partial J^i \to J^i$, for $i \in J$, are monomorphisms. Therefore, by the first condition of \cref{semicubical_tribe}, $x^{J^i} \to x^{\partial J^i}$ is a fibration, proving that the mapping takes values in Reedy fibrant diagrams. The second condition for the previous definition ensures that such diagrams are furthermore homotopically constant, and the third condition shows that the mapping $$c : \mathcal{T} \to J\text{-Fr}(\mathcal{T})$$ preserves anodyne maps. Since it is defined in terms of cotensors, it also preserves limits. Finally, it preserves fibrations by the first condition of \cref{semicubical_tribe}, concluding the proof that $c$ is a tribe morphism.
\end{proof}

The construction also works for $\pi$-tribes, as we now discuss. It has been established by Kapulkin and Lumsdaine that categories of homotopical inverse diagrams inherit $\Pi$-types (see \cite[Proposition 5.13]{kapulkin2021homotopical}, which is phrased in the context of categories with attributes).

\begin{proposition}[Adapted from {\cite[Proposition 5.14]{kapulkin2021homotopical}}]
\label[proposition]{htp_diag_trb}
 Let $\mathcal{T}$ be a $\pi$-tribe.
 \begin{enumerate}
  \item If $I$ is a homotopical inverse category with all arrows weak equivalences, then the full subcategory $\mathcal{T}_f^I \subset \mathcal{T}^I$ of Reedy fibrant homotopical diagrams admits a $\pi$-tribe structure.
  \item If $I \to J$ is a discrete opfibration between two inverse categories as above, then restriction along $I \to J$ yields a morphism of $\pi$-tribes $\mathcal{T}_f^J \to \mathcal{T}_f^I$.
 \end{enumerate}
\end{proposition}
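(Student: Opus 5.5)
I would deduce this from Kapulkin--Lumsdaine \cite[Propositions 5.13 and 5.14]{kapulkin2021homotopical} by translating between their categories with attributes and the tribe language. For part (1), I first equip $\mathcal{T}_f^I$ with the following tribe structure:
\begin{itemize}
\item fibrations are the Reedy fibrations;
\item weak equivalences are the pointwise homotopy equivalences;
\item anodyne maps are those with the left lifting property against Reedy fibrations.
\end{itemize}
This is the standard statement that Reedy fibrant diagrams on an inverse category form a tribe. Terminal object, stability of fibrations under base change, and closure of fibrations under composition come from the usual pullback-of-matching-objects arguments, as in the proof of \cref{cotensor_reedy_cubes}. Factorizations of a map into an anodyne map followed by a fibration are built by induction on degree: factor the map $X_i \to Y_i \times_{M_iY} M_iX$ in $\mathcal{T}$ at each $i$.

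The main point in this step is that the homotopical condition is preserved. The objects built are Reedy fibrant, and since all arrows of $I$ are weak equivalences, "homotopical" means every structure map is a weak equivalence. Anodyne maps in the Reedy structure are pointwise weak equivalences, so by 2-out-of-3 the middle object of a factorization of a map between homotopical diagrams is again homotopical. Pullbacks of homotopical Reedy fibrant diagrams along Reedy fibrations are homotopical by right properness: pullbacks along fibrations preserve weak equivalences in a tribe. Hence $\mathcal{T}_f^I$ is a full subcategory closed under these constructions.

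Next come the $\Pi$-types. Given Reedy fibrations $p : E \to A$ and $q : A \to B$ in $\mathcal{T}_f^I$, I would define $\Pi_q E$ at each $i$ by induction on degree, following Kapulkin--Lumsdaine. One takes the matching-corrected dependent product in $\mathcal{T}$, namely the pullback of $\Pi_{q_i}(E_i)$ over the matching data. The existence of this object and its Reedy fibrancy are exactly the content of \cite[Proposition 5.13]{kapulkin2021homotopical}, translated via the comprehension category $\mathbf{cod}$ of the tribe. The two remaining facts are the following:
\begin{itemize}
\item the result is homotopical, because $\Pi$ along fibrations preserves weak equivalences between fibrations in a $\pi$-tribe, which is the homotopy invariance of $\Pi$;
\item anodyne maps are stable under pushforward along fibrations.
\end{itemize}
I expect this to be the main obstacle, specifically checking that the strict dependent product on diagrams agrees with the pointwise one up to the matching correction. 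The homotopical hypothesis is exactly what Kapulkin--Lumsdaine use to guarantee this.

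For part (2), let $u : I \to J$ be a discrete opfibration. Restriction along $u$ preserves limits and pointwise weak equivalences. It preserves Reedy fibrations because the matching category of $i$ in $I$ is isomorphic to the matching category of $u(i)$ in $J$, using unique lifting of maps out of $i$; hence matching objects are computed identically. The same observation shows that the inductive construction of $\Pi$ commutes with restriction, so restriction preserves $\Pi$-types. It preserves anodyne maps because it has a left adjoint (left Kan extension along a discrete opfibration, computed as coproducts over fibers) that preserves Reedy fibrations. Together these give a morphism of $\pi$-tribes.
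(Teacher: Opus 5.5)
\textbf{Overall route and verdict.} Your route is the paper's. The paper takes the $\pi$-tribe of all Reedy fibrant diagrams from its earlier work. It then cites \cite[Proposition 5.13]{kapulkin2021homotopical} for closure of the homotopical diagrams under dependent products, and \cite[Proposition 5.14]{kapulkin2021homotopical} for part (2). Your additional checks are fine: closure of homotopical diagrams under factorizations and pullbacks, and invariance of matching categories under discrete opfibrations. However, two steps you supply yourself do not work as written.

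\textbf{Anodyne preservation in part (2).} This is the main gap. An adjunction $u_! \dashv u^*$ only gives $u_!(a) \pitchfork p \iff a \pitchfork u^*(p)$. That could show $u^*$ preserves \emph{fibrations}. To transfer a \emph{left} lifting class along $u^*$ you need a \emph{right} adjoint $u_*$ that preserves Reedy fibrations. Moreover, $u_!$ is built from coproducts, which a tribe need not have, and $u_*$ requires limits over $j \downarrow u$ that need not exist either.

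The fix is to use that anodyne maps in $\mathcal{T}_f^I$ are exactly the pointwise anodyne maps (compare \cite[Lemma 2.22]{ks2019internal}).
\begin{itemize}
\item A pointwise anodyne $X \to Y$ lifts against a Reedy fibration $E \to B$ by induction on degree: at each $i$, lift $X_i \to Y_i$ against the fibration $E_i \to B_i \times_{M_iB} M_iE$.
\item Conversely, a Reedy anodyne map is a retract of the pointwise anodyne first factor of its Reedy factorization.
\end{itemize}
Restriction along $u$ visibly preserves pointwise anodyne maps. Separately, that factorization must factor $X_i \to Y_i \times_{M_iY} M_iZ$, where $Z$ is the diagram under construction. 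Using $M_iX$ there, as you wrote, is incorrect.

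\textbf{Homotopicality of $\Pi_qE$.} Homotopy invariance of $\Pi$ in $\mathcal{T}$ does not by itself make $\Pi_qE$ homotopical, because $(\Pi_qE)_i$ is not $\Pi_{q_i}E_i$. It is the pullback, to $B_i \times_{M_iB} M_i(\Pi_qE)$, of $\Pi_{q_i}$ applied to the relative matching fibration $E_i \to A_i \times_{M_iA} M_iE$. That fibration is generally not trivial, even for homotopical $E$. For example, take the span $0 \leftarrow 1 \to 0'$, set $A = B = *$, and let $E$ be given by a path object $Px \to x \times x$.

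Showing that the structure maps of $\Pi_qE$ are weak equivalences is exactly the content of \cite[Proposition 5.13]{kapulkin2021homotopical}. Since you defer to it, as the paper does, this step is acceptable. The homotopy-invariance heuristic should not be presented as the argument, though.

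Finally, a $\pi$-tribe does not require $\Pi$ to preserve anodyne maps, only the existence of the right adjoints $\Pi_q$ inside $\mathcal{T}_f^I$. That item in your list can be dropped.
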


\begin{proof}
 In \cite[Proposition 3.1]{cherradi2026generalized}, we rephrased the result by Kapulkin and Lumsdaine in terms of $\pi$-tribes in the case of non-homotopical diagrams. But the subcategory of homotopical diagrams is also closed under dependent product by \cite[Proposition 5.13]{kapulkin2021homotopical} (since $\Pi$-types in the corresponding category with attributes correspond to dependent product in the $\pi$-tribe). Therefore, $\mathcal{T}_f^I$ admits dependent products, and is itself a $\pi$-tribe.
 
 Similarly, the second point follows from the functoriality statement in \cite[Proposition 5.14]{kapulkin2021homotopical}, having in mind the alluded correspondence between $\Pi$-types in a category with attributes and the dependent product in a $\pi$-tribe. 
\end{proof}

\begin{lemma}
\label[lemma]{lift_zigzag}
 Let $\mathcal{F}$ be a fibration category, $I$ be a finite direct category, and $n$ a natural number. Write $I_n$ for the full subcategory of $I$ spanned by the objects of degree less than $n$.
 
 Then, given any Reedy fibrant diagrams $X : I^{op} \to \mathcal{F}$ and $Y : I_n^{op} \to \mathcal{F}$ and any zig-zag of (Reedy) trivial fibrations in $\mathcal{F}^{I_n^{op}}$ between $Y$ and the restriction $X_n$ of $X$ to $I_n^{op}$, there exists a lift of the whole zig-zag to $\mathcal{F}^{I^{op}}$.
\end{lemma}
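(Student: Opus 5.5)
We argue by induction on the length of the zig-zag, so that it suffices to treat a single step. That step is a Reedy trivial fibration in $\mathcal{F}^{I_n^{op}}$ either of the form $q : Z \to X_n$ or of the form $q : X_n \to Z$. In either case we must produce a Reedy fibrant $Z' : I^{op} \to \mathcal{F}$ with $Z'|_{I_n} = Z$, together with a Reedy trivial fibration between $Z'$ and $X$ restricting to $q$. The new endpoint $Z'$ is again a Reedy fibrant diagram on all of $I$, so we can feed it into the next step of the zig-zag. For a single step we extend degree by degree: handling all objects of degree $n$ extends the data to $I_{n+1}$, and since $I$ is finite, iterating this (with $n+1$ in place of $n$) terminates. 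At an object $x$ of degree $n$, the matching objects $M_x Z$ and $M_x X$ only involve objects of degree $<n$. They exist by Reedy fibrancy, and $M_x q$ is a trivial fibration by the result of \cite{radulescu2006cofibrations} quoted in the proof of \cref{cotensor_reedy_cubes}.

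\emph{Case $q : Z \to X_n$.} Define
$$Z'_x := X_x \times_{M_x X} M_x Z.$$
This pullback exists because $X_x \to M_x X$ is a fibration. The matching map $Z'_x \to M_x Z$ is a base change of $X_x \to M_x X$, hence a fibration, so $Z'$ is Reedy fibrant. The relative matching map of $Z' \to X$ at $x$ is the identity of $X_x \times_{M_x X} M_x Z$, hence an isomorphism, so $Z' \to X$ is a Reedy trivial fibration. This case is purely formal.

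\emph{Case $q : X_n \to Z$.} This is the step I expect to be the main obstacle. We need an object $Z'_x$ with a fibration $Z'_x \to M_x Z$ and a map $X_x \to Z'_x$ such that the relative matching map
$$X_x \to Z'_x \times_{M_x Z} M_x X$$
is a trivial fibration. The naive choice $Z'_x = X_x$ gives a relative matching map $X_x \to X_x \times_{M_x Z} M_x X$. This map is a weak equivalence, by 2-out-of-3, since it is a section of the base change of the trivial fibration $M_x q$. However, it need not be a fibration. My first attempt is to factor the fibration $X_x \to M_x Z$ (a composite of two fibrations) as a weak equivalence $X_x \to Z'_x$ followed by a fibration $Z'_x \to M_x Z$. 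Then $X_x \to Z'_x \times_{M_x Z} M_x X$ is a weak equivalence by 2-out-of-3, because $Z'_x \times_{M_x Z} M_x X \to Z'_x$ is a trivial fibration. To make it a fibration as well, I would first refactor $X_x \to M_x X \times_{M_x Z} Z'_x$, using the factorization in $\mathcal{F}$, and replace $X$ at degree $n$ by the resulting fibrant middle object.

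This replacement alters the diagram we are supposed to lift, so I would absorb it using the first case. Concretely, the replaced object produces an extra backward trivial fibration onto $X$, and we lift that step by the pullback construction above, so that the whole configuration remains a zig-zag of Reedy trivial fibrations restricting correctly to $I_n$. If this bookkeeping becomes awkward, the fallback is to use the stability of trivial fibrations under pullback to transport the problem along $M_x q$. Everything else in the induction (existence of the pullbacks, and the Reedy fibration and trivial fibration checks) should be routine, following the pattern of \cref{cotensor_reedy_cubes_gap}.
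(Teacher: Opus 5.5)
Your first case is exactly the paper's argument. The gap is in the second case, and it comes from a requirement you added yourself: you want the lift of $q : X_n \to Z$ to be a Reedy \emph{trivial fibration} $X \to Z'$. Under that requirement your repair does not go through as written. Factoring $X_x \to Z'_x \times_{M_x Z} M_x X$ as $X_x \xrightarrow{\sim} \tilde X_x \twoheadrightarrow Z'_x \times_{M_x Z} M_x X$ produces a \emph{forward} weak equivalence $X \to \tilde X$, which is the identity on $I_n$. It does not produce a backward trivial fibration onto $X$. So the pullback construction of your first case has nothing to act on, and the zig-zag you end up lifting no longer starts at $X$. The ``fallback'' is not an argument. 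Relatedly, your plan to iterate ``with $n+1$ in place of $n$'' needs a Reedy trivial fibration $X_{n+1} \to Z'_{n+1}$ as input, which your first attempt does not supply.

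The missing observation is that a lift by levelwise weak equivalences between Reedy fibrant diagrams suffices. Each new endpoint is still Reedy fibrant, so the next step of the zig-zag can still be processed. This is also all the paper's own argument provides in this case, and it is enough for the application in \cref{jframes_pi}. With that, your first attempt already \emph{is} the paper's proof, provided you apply it at every degree $k \geq n$ and not only at degree $n$. Take the diagram equal to $Z$ on $I_n$ and to $X$ above it, and Reedy-factor it relative to $I_n$. Concretely, at each $x$ of degree $k$, factor $X_x \to M_x Z'$ as $X_x \xrightarrow{\sim} Z'_x \twoheadrightarrow M_x Z'$; the matching object exists because $Z'$ is already Reedy fibrant below degree $k$. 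The result is a levelwise weak equivalence $X \to Z'$ restricting to $q$, and no trivial-fibration hypothesis is needed to pass from one degree to the next.

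If you really want trivial fibrations, the correct repair of your refactoring is Brown's factorization lemma in the fibration category of fibrations over $M_x X$. It replaces $X_x \xrightarrow{\sim} \tilde X_x$ by a span $X_x \twoheadleftarrow W_x \twoheadrightarrow \tilde X_x$ of trivial fibrations over $M_x X$. Composing with $\tilde X \twoheadrightarrow Z'$ gives Reedy trivial fibrations $X_{n+1} \twoheadleftarrow W \twoheadrightarrow Z'$ restricting to the identity and to $q$ on $I_n$. Your first case then extends $W \to X_{n+1}$ to all of $I$. Iterating yields a lift of the form $X \twoheadleftarrow V \twoheadrightarrow Z'$, at the cost of inserting an extra step that is the identity on $I_n$.
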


\begin{proof}
There are two elementary cases to consider to lift a whole zig-zag. First, we may assume that we are given a trivial fibration $Y \to X_n$. As in the usual argument for constructing "Reedy" factorizations, we proceed by induction on the degree $k > n$ of the "missing" objects in $Y$, and we rely on the matching objects $M_i X$ for $i \in I$ of degree $k$ to define the lift $Y'$ of $Y$ at $i$ as $Y'(i) := M_i Y \times_{M_i X} X(i)$. The Reedy fibration criterion is then trivially satisfied for $Y'$ (the map $Y' \to X$ is a Reedy trivial fibration whose relative latching maps at $i$ are isomorphisms, for $\mathbf{deg}(i) > n$).

For the second case, we assume given a trivial fibration $X_n \to Y$. Then, we may define an extension $Y'$ such that $Y'(i) := X(i)$ for objects $i$ of degree strictly greater than $n$. The diagram so defined need not be fibrant, so we have to further factor the map $Y' \to *$ as a weak equivalence followed by a fibration, yielding a diagram $Y''$. Since the restriction of $Y'$ to $I_n^{op}$ is already Reedy fibrant (it coincides with $Y$), the factorization can be chosen so that $Y''$ also coincides with $Y$ when restricted to $I_n^{op}$.

This concludes the proof.
\end{proof}

\begin{proposition}
\label[proposition]{jframes_pi}
 Suppose that the degree function of $J$ takes values in the natural numbers, and that the full subcategories $J_n$ spanned by the objects of degree less than $n$ are finite.
 
 Then, if $\mathcal{T}$ is a $\pi$-tribe, so is the tribe $J\text{-Fr}(\mathcal{T})$.
\end{proposition}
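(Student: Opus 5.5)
We need dependent products in $J\text{-Fr}(\mathcal{T})$. The difficulty is that $J$ may be infinite, while \cref{htp_diag_trb} (which rests on Kapulkin–Lumsdaine) is used for the inverse categories $J_n^{op}$, each finite by hypothesis. The plan is to build $\Pi$ along a fibration of frames degree by degree, using the finite truncations, and to pass to the limit. Since a $J$-frame is determined by its compatible restrictions to the $J_n^{op}$, we can write $J\text{-Fr}(\mathcal{T})$ as the limit of the tower of tribes $J_n\text{-Fr}(\mathcal{T}) = \mathcal{T}_f^{J_n^{op}}$ along the restriction functors.

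\emph{Step 1: restrictions preserve $\Pi$.} Each $J_n \hookrightarrow J_{n+1}$ is the inclusion of a downward closed full subcategory, i.e.\ a sieve. Its opposite $J_n^{op} \to J_{n+1}^{op}$ is then a discrete opfibration between inverse categories in which all arrows are weak equivalences. By \cref{htp_diag_trb}, each $\mathcal{T}_f^{J_n^{op}}$ is a $\pi$-tribe and each restriction $\mathcal{T}_f^{J_{n+1}^{op}} \to \mathcal{T}_f^{J_n^{op}}$ is a morphism of $\pi$-tribes. In particular it preserves dependent products up to canonical isomorphism, hence up to a specified isomorphism once $\Pi$-structures are chosen.

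\emph{Step 2: compatible choices.} Take a fibration $p : E \to B$ and a fibration $q : X \to E$ of $J$-frames. Write $p_n, q_n$ for their restrictions to $J_n^{op}$. The goal is a sequence of fibrations $\Pi_n \to B_n$ in $\mathcal{T}_f^{J_n^{op}}$, each a dependent product of $q_n$ along $p_n$, with the restriction of $\Pi_{n+1}$ to $J_n^{op}$ equal to $\Pi_n$ on the nose, not merely isomorphic. Choose $\Pi_{n+1}$ arbitrarily among dependent products. Its restriction is then isomorphic to $\Pi_n$ over $B_n$, compatibly with the evaluation maps. Transport the structure of $\Pi_{n+1}$ along this isomorphism, replacing its values at objects of degree $< n$ by those of $\Pi_n$. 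An isomorphism is in particular a trivial fibration, so one could instead invoke \cref{lift_zigzag}. For isomorphisms, however, the direct transport is cleaner, and it preserves Reedy fibrancy and the universal property.

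\emph{Step 3: glue and verify.} Glue the $\Pi_n$ into a $J$-diagram $\Pi$ over $B$. Reedy fibrancy, homotopicality and being a fibration over $B$ are all checked objectwise at some finite level, where they hold. The evaluation maps glue likewise. For the universal property, take a frame $C \to B$ with a map $C \times_B E \to X$ over $E$. The transposes at each level are unique, so they are compatible with restriction and glue to a unique map $C \to \Pi$. Pullbacks in $J\text{-Fr}(\mathcal{T})$ are computed levelwise, so this is exactly dependent product in $J\text{-Fr}(\mathcal{T})$. It remains to check that $\Pi$ preserves fibrations, so that the $\pi$-tribe axiom holds. Fibrations and trivial fibrations of frames are Reedy conditions checked at each object, hence in some $J_n$, where \cref{htp_diag_trb} supplies them.

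The main obstacle is Step 2: making the choices strictly compatible, and checking that the transported object is still a dependent product in $\mathcal{T}_f^{J_{n+1}^{op}}$ together with its evaluation map. This needs care that the isomorphism at level $n$ lives over $B_n$ and commutes with evaluation. Uniqueness in the universal property gives exactly that.
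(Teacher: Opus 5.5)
Your argument is correct, but it takes a different route from the paper's.

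\textbf{The paper's route.} The paper starts from the $\pi$-tribe of all Reedy fibrant $J^{op}$-diagrams and proves only that homotopically constant diagrams are closed under its dependent products. To see that each restriction $(X^Y)_n$ is homotopically constant, it proceeds as follows:
\begin{enumerate}
\item It uses Cisinski's rigidification, $\mathbf{Ho}_\infty(\mathcal{T}^{J_n^{op}}) \simeq \mathbf{Ho}_\infty(\mathcal{T})^{\mathbf{N}(J_n^{op})}$, to extend $X_n$ and $Y_n$ up to equivalence to the cone $J_n^{\triangleleft}$.
\item It strictifies these extensions with \cref{lift_zigzag}.
\item It applies the first item of \cref{htp_diag_trb} on the finite cone and the second item to restrict back.
\item It reassembles the levels, again via \cref{lift_zigzag}.
\end{enumerate}

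\textbf{Your route.} You apply the first item of \cref{htp_diag_trb} directly to the finite $J_n^{op}$. You use the second item on the sieve inclusions to get a tower of $\pi$-tribe morphisms. You then build $\Pi$ on $J$ itself as a strictly compatible limit, with transport along isomorphisms doing the job of zig-zag lifting. This is enough because Reedy fibrancy, homotopicality, the universal property and preservation of (trivial) fibrations are all detected at finite levels.

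\textbf{What each buys.} Your version is more elementary. It needs no $\infty$-categorical rigidification, no cones, and no prior $\pi$-tribe structure on Reedy fibrant diagrams of infinite shape. In particular, it never needs a homotopically constant $J_n$-diagram to be equivalent to a constant one. The paper's cone extension tacitly requires that; it holds when $\mathbf{N}(J_n)$ is contractible, but the stated hypotheses do not imply it.

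The paper's formulation instead directly exhibits frames as closed under the ambient dependent product of Reedy fibrant diagrams. That is the form invoked later in \cref{pitribe_frames}. Your setup recovers it once that ambient $\Pi$ is available. The first item of \cref{htp_diag_trb} at each finite level is itself a closure statement, and the ambient $\Pi$ restricts along the sieves $J_n \subset J$. So the same levelwise check shows it is a frame and agrees with your glued $\Pi$.
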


\begin{proof}

 Using the first item of \cref{htp_diag_trb}, it is enough to check that homotopically constant diagrams in $\mathcal{T}^{J^{op}}_f$ are closed under dependent products.
 
 Write $\mathbf{Ho}_\infty(\mathcal{T})$ for the presented $(\infty,1)$-category.
 We have two towers of isofibrations
\[\begin{tikzcd}[ampersand replacement=\&]
	{\mathbf{Ho}_\infty(\mathcal{T}^{J^{op}})} \&\& {\mathbf{Ho}_\infty(\mathcal{T})^{\mathbf{N}(J^{op})}} \\
	{...} \&\& {...} \\
	{\mathbf{Ho}_\infty(\mathcal{T}^{J_1^{op}})} \&\& {\mathbf{Ho}_\infty(\mathcal{T})^{\mathbf{N}(J_1^{op})}} \\
	{\mathbf{Ho}_\infty(\mathcal{T}^{J_0^{op}})} \&\& {\mathbf{Ho}_\infty(\mathcal{T})^{\mathbf{N}(J_0^{op})}}
	\arrow[dashed, from=1-1, to=1-3]
	\arrow[from=1-1, to=2-1]
	\arrow[from=1-3, to=2-3]
	\arrow[two heads, from=2-1, to=3-1]
	\arrow[two heads, from=2-3, to=3-3]
	\arrow[from=3-1, to=3-3]
	\arrow[two heads, from=3-1, to=4-1]
	\arrow[two heads, from=3-3, to=4-3]
	\arrow[from=4-1, to=4-3]
\end{tikzcd}\]
where the horizontal solid arrows are equivalences of $(\infty,1)$-categories by \cite[Corollary 7.6.18]{cisinski2019} (using the Reedy fibration category structure on $\mathcal{T}^{J_n^{op}}$), so that we know every corresponding homotopy coherent diagram can be rigidified up to weak equivalence.
Now, consider homotopically constant diagrams $X$ and $Y$ in $\mathcal{T}$, and let us check that the exponential $X^Y$ is homotopically constant (for notational simplicity, we consider exponentials but the argument works for dependent products in general). We write $Z_n$ for the restriction of a diagram $Z \in \mathcal{T}^{J^{op}}$ to $J_n^{op}$.

The assumption that $X$ and $Y$ are homotopically constant implies the same for $X_n$ and $Y_n$. Thanks to the above observation (also applied to $J^\triangleleft$), we see that the latter is equivalent to the existence of "essential" extensions $X'_n$ and $Y'_n$ to the inverse category $(J_n^\triangleleft)^{op}$; that is, $X_n'$ and $Y_n'$ restrict to $J_n^{op}$ to yield diagrams equivalent to $X_n$ and $Y_n$. By \cref{lift_zigzag}, we can lift those to "real" extensions of $X_n$ and $Y_n$ (still denoted $X_n'$ and $Y_n'$).

By the first item of \cref{htp_diag_trb}, $X_n'^{Y_n'}$ is a homotopical diagram, and it restricts to $J$ to give a diagram isomorphic to $X_n^{Y_n}$, by the second item of \cref{htp_diag_trb}, since $J_n \subset J_n^\triangleright$ is a discrete fibration. Hence, $X_n^{Y_n}$ is also homotopically constant. 

Finally, we have homotopical diagrams $X_n^{Y_n}$ (which coincide with $(X^Y)_n$ since $J_n \subset J$ is a discrete fibration, again by \cref{htp_diag_trb}). By \cref{lift_zigzag}, we can inductively build a consistent sequence $(X^Y)'_n$ of diagrams, which together yield a diagram $(X^Y)' : (J^\triangleright)^{op} \to \mathcal{T}$ which extends $X^Y$. This means that $X^Y$ is homotopically constant, concluding the proof that the tribe $J\text{-Fr}(\mathcal{T})$ is a $\pi$-tribe.
\end{proof}
 
 \section{Generalized inverse diagrams in a tribe}
 
 From now on, $R$ is a generalized direct category (in particular, an EZ-category), in the sense of the following definition:
 
 \begin{definition}
 We say that a category $R$ is a generalized direct category when there exists a degree function $\mathbf{deg} : \mathbf{Ob}(R) \to \alpha$ for some ordinal $\alpha$ such that invertible morphisms preserve the degree, and non-invertible ones strictly raise it. Equivalently, this asks for the existence of a conservative functor $R \to \alpha$ (where $\alpha$ is seen as a posetal category).
 
 We say that $R$ is a generalized inverse category when $R^{op}$ is generalized direct.
\end{definition}
 
We assume that $R$ is monoidal, has finitely many objects and isomorphisms in each degree, and that it is equipped with a functor into it satisfying the condition below, introduced in \cite{cherradi2026generalized}:
 
 \begin{itemize}
 \item There is a degree-preserving monoidal functor $r : R_0 \to R$ from a strictly direct monoidal category, such that every arrow $f : a \to b$ in $R$ lifts to an arrow $k : x \to y$ in $R_0$ up to isomorphism, i.e., such that there are isomorphisms $w : a \simeq r(x)$ and $w' : b \simeq r(y)$ fitting in a commutative square:

\[\begin{tikzcd}[ampersand replacement=\&]
	a \&\& b \\
	\\
	{r(x)} \&\& {r(y)}
	\arrow["f", from=1-1, to=1-3]
	\arrow["w"', from=1-1, to=3-1]
	\arrow["{w'}", from=1-3, to=3-3]
	\arrow["{r(k)}"', from=3-1, to=3-3]
\end{tikzcd}\]
(in other words, the functor $r^\rightarrow : R_0^\rightarrow \to R^\rightarrow$ is required to be essentially surjective on objects).
\end{itemize}

The category of semi-cubes is our main example of such a category, and we will be able to apply directly the content of this section to that case (see Section 8).
 
  Recall the following definition (\cite[Definition 3.1]{cherradi2026generalized}):

 \begin{definition}
 \label[definition]{def_dr}
 Write $\mathbf{F}$ for the free category comonad on $\mathbf{Cat}$ stemming from the canonical adjunction with reflexive graphs.
 
 Form the pushout as in the diagram below:
\[\begin{tikzcd}[ampersand replacement=\&]
	{\mathbf{F}(R_0)} \&\& {\mathbf{F}(R)} \& \\
	\\
	{R_0} \&\& {\mathbf{F}_\simeq(R)} \\
	\&\&\& R
	\arrow[from=1-1, to=1-3]
	\arrow[from=1-1, to=3-1]
	\arrow["\iota", from=1-3, to=3-3]
	\arrow[curve={height=-6pt}, from=1-3, to=4-4]
	\arrow["{\iota_0}"', from=3-1, to=3-3]
	\arrow[curve={height=6pt}, from=3-1, to=4-4]
	\arrow["\ulcorner"{anchor=center, pos=0.125, rotate=180}, draw=none, from=3-3, to=1-1]
	\arrow["{p_0}", dashed, from=3-3, to=4-4]
\end{tikzcd}\]
 Finally, write $\mathbf{D}_r$ for the full subcategory of the twisted arrow category $\mathbf{Tw}(\mathbf{F}_\simeq(R))$ spanned by the objects consisting of an arrow $x \to y$ of the form $x \to z \to y$, where $x \to z$ is the image of a map in $R_0$ (possibly an identity map) through $\iota_0$, and $z \to y$ is an identity arrow or corresponds to a "free" isomorphism in $R$  (that is, an arrow obtained by taking the image under $\iota :\mathbf{F}(R) \to \mathbf{F}_\simeq(R)$ of a path of length one in $R$ whose underlying arrow is an isomorphism in $R$). 
 
 It comes with a projection $p : \mathbf{D}_r \to R$, which projects into the codomain.
 \end{definition}
  
  In \cite{cherradi2026generalized}, we established the following:
  
\begin{lemma}
  $\mathbf{D}_r$ inherits a direct category structure.
\end{lemma}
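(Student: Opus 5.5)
Since $\mathbf{D}_r$ sits inside $\mathbf{Tw}(\mathbf{F}_\simeq(R))$, I would prove the lemma by writing down an explicit degree function $\mathbf{deg}' : \mathbf{Ob}(\mathbf{D}_r) \to \omega\cdot\alpha$ (or a lexicographic product of ordinals) and checking that every non-identity morphism strictly raises it. The ingredients are the degree of $R$, the strict direct structure of $R_0$, and a length count for free isomorphisms.

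First I would describe objects and morphisms concretely. An object is a composite $x \xrightarrow{\iota_0(k)} z \xrightarrow{\phi} y$, where $k$ is a map of $R_0$ and $\phi$ is either an identity or a single free isomorphism $\iota([u])$ with $u$ invertible in $R$. A morphism in the twisted arrow category from $(x\to y)$ to $(x'\to y')$ is a pair $a : x' \to x$, $b : y \to y'$ with $b\circ f\circ a = f'$. With the convention that $\mathbf{D}_r$ projects to $R$ via the codomain, the relevant direction is $b$. I would then set, for an object $f : x \to z \to y$,
\[
\mathbf{deg}'(f) := \bigl(\mathbf{deg}_R(p_0 y),\ \mathbf{deg}_{R_0}(\text{source of } f\text{ in } R_0)\text{ reversed},\ \epsilon(f)\bigr),
\]
ordered lexicographically. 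Here $\epsilon(f)\in\{0,1\}$ records whether the free isomorphism is present. The exact orientation of each component has to be fixed after examining which composites are possible.

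The key steps are as follows:
\begin{enumerate}
\item Show that $p_0 : \mathbf{F}_\simeq(R)\to R$ is degree-compatible, so that $\mathbf{deg}_R(p_0 b)$ is weakly monotone, and strictly increasing unless $p_0(b)$ is invertible.
\item Treat the case where $p_0(b)$ is invertible in $R$. Since objects of $\mathbf{D}_r$ have a rigid normal form, $b$ must then either absorb or create the free isomorphism $\phi$, or be trivial. Because $\mathbf{F}_\simeq(R)$ is a pushout of a free category, its morphisms have normal forms as alternating words of $R_0$-maps and free arrows. Using this, I would argue that the constraint $b\circ f\circ a = f'$ with both $f,f'$ of length at most two forces $a$ to be an $R_0$-map. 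Then either $a$ is non-identity, and the $R_0$-degree component strictly changes because $R_0$ is strictly direct, or the $\epsilon$ component changes.
\item Verify that no non-identity endomorphism or isomorphism exists, which follows from the strict monotonicity.
\end{enumerate}

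I expect the main obstacle to be step 2: controlling the words in the pushout $\mathbf{F}_\simeq(R)$, and in particular showing that relations in $R_0$ cannot be used to shorten a word in a way that creates a loop. The approach I would try first is a rewriting or normal-form description of $\mathbf{F}_\simeq(R)$. Because $\mathbf{F}(R)$ is free and the pushout only identifies the image of $\mathbf{F}(R_0)$ with $R_0$, each morphism should be uniquely a reduced alternating word of non-identity $R_0$-composites and $R$-arrows not in the image of $r$. If this normal form holds, the length and degree bookkeeping above becomes routine.
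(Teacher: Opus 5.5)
The paper does not prove this lemma; it quotes it from \cite{cherradi2026generalized}. I therefore judge your plan on its own merits. Your invariants are the right ones: codomain degree, reversed domain degree, and the flag $\epsilon$ for the free isomorphism. But Step 2 would fail as written.

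\textbf{The gap in Step 2.} The claim that $b\circ f\circ a=f'$ with $p_0(b)$ invertible forces $a$ to be an $R_0$-map is false. Suppose $f=\mathrm{id}_x$ is an object of $\mathbf{D}_r$. Then for any object $f'=[u]\circ\iota_0(a_0)$ with codomain $x$, the pair $(a,b)=(f',\mathrm{id}_x)$ is a morphism $f\to f'$, and here $p_0(b)$ is an identity. For semi-cubes, $([\rho],\mathrm{id}):\mathrm{id}_{I^1}\to[\rho]$ is an example, and $a=[\rho]$ is not an $R_0$-map. Similarly, $b$ can never \emph{absorb} $\phi$; it can only create it.

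The normal form you want to rely on is also misdescribed. Reduced words need not alternate: $[\rho][\rho]$ is an irreducible word with two consecutive free arrows. When $r$ is not faithful or not injective on objects, the $R_0$-segments are determined only up to the identifications made by $r$, so the uniqueness you state is not available. So the step you identify as the crux is both unproved and partly wrong.

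\textbf{A repair that needs no normal form.} The counit $\mathbf{F}(R_0)\to R_0$ is bijective on objects and full, so its pushout $\iota$ is too. Hence every morphism of $\mathbf{F}_\simeq(R)$ is represented by a path of non-identity $R$-arrows. Use two functors out of $\mathbf{F}_\simeq(R)$: $p_0$, and $N:\mathbf{F}_\simeq(R)\to(\mathbb{N},+)$ counting the non-identity isomorphisms of $R$ in a representing path.
\begin{itemize}
\item $N$ is well defined. The identified paths consist of images of non-identity $R_0$-arrows. These strictly raise degree, since $r$ preserves degree and $R_0$ is strictly direct, so they are never invertible.
\item For an object $f:x\to y$ of $\mathbf{D}_r$, $N(f)\in\{0,1\}$ is your $\epsilon(f)$. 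This also shows $\epsilon$ is well defined, which is what you wanted the "rigid normal form" for.
\item For a morphism $(a,b):f\to f'$ we have $\mathbf{deg}\,y\le\mathbf{deg}\,y'$, $\mathbf{deg}\,x'\le\mathbf{deg}\,x$, and $N(f')=N(b)+N(f)+N(a)\ge N(f)$.
\item Suppose all three are equalities. Then $N(a)=N(b)=0$. Every arrow in paths representing $a$ and $b$ preserves degree, so it is invertible in $R$ and is counted by $N$. Both paths are therefore empty, and $(a,b)$ is an identity.
\end{itemize}
Consequently $\mathbf{deg}'(f)=\mathbf{deg}\,y-\mathbf{deg}\,x+N(f)\in\mathbb{N}$, or your lexicographic triple, strictly increases along non-identity morphisms.

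\textbf{Finite degrees are needed.} This argument uses that degrees are natural numbers, not a general $\omega\cdot\alpha$. Your reversed component is not ordinal-valued in general, and the statement itself fails for transfinite degrees. For $R_0=R=\omega+1$ one gets $\mathbf{D}_r=\mathbf{Tw}(\omega+1)$. This contains the infinite chain $\cdots\to(2\le\omega)\to(1\le\omega)\to(0\le\omega)$ of non-identity morphisms, so it is not direct.
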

 
 \begin{lemma}
  The canonical functor $p : \mathbf{D}_r \to R$ is absolutely dense (i.e., the precomposition functor $p^* : \mathbf{Set}^{R} \to \mathbf{Set}^{\mathbf{D}_r}$ is fully faithful).
 \end{lemma}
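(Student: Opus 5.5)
The plan is to use the elementary criterion for precomposition to be fully faithful, rather than computing left Kan extensions. Fix covariant functors $F,G : R \to \mathbf{Set}$. Faithfulness of $p^*$ will follow once $p$ is surjective on objects up to isomorphism. Fullness of $p^*$ amounts to showing that every natural transformation $\beta : Fp \Rightarrow Gp$ is of the form $\alpha p$ for a unique $\alpha : F \Rightarrow G$. The proof splits into three steps.

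\emph{Step 1 (essential surjectivity).} For $b \in R$, the lifting hypothesis applied to $\mathrm{id}_b$ gives an $x \in R_0$ and an isomorphism $w : b \simeq r(x)$. Consider the object $\iota_0(\mathrm{id}_x)$ followed by the free isomorphism $\iota(w^{-1}) : r(x) \to b$. Its codomain is $b$, so it lies in $\mathbf{D}_r$ and $p$ sends it to $b$. In fact $p$ is surjective on objects, so $\alpha_b$ is forced to be $\beta_d$ for any $d$ with $p(d)=b$. This already gives faithfulness of $p^*$.

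\emph{Step 2 (well-definedness).} We must show that $\beta_d$ depends only on $p(d)$. For this we show that each fibre $p^{-1}(b)$ is connected by morphisms of $\mathbf{D}_r$ that $p$ sends to $\mathrm{id}_b$; naturality of $\beta$ along such a morphism $u$ gives $\beta_d = \beta_{d'}$, since $F(p u)$ and $G(p u)$ are identities. A morphism in the twisted arrow category from $(x \to z \to y)$ to $(x' \to z' \to y)$ with identity codomain component is a map $x' \to x$ in $\mathbf{F}_\simeq(R)$ making the triangle commute. I would connect an arbitrary object $(x \xrightarrow{\iota_0 k} z \xrightarrow{\iota s} y)$ to the identity-type object $(z \xrightarrow{\mathrm{id}} z \xrightarrow{\iota s} y)$ via $k$ itself. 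Two such objects over the same $y$, given by free isomorphisms $s : z \to y$ and $s' : z' \to y$, would then be connected using the relations in the pushout $\mathbf{F}_\simeq(R)$, which identify composites of free arrows coming from $R_0$. Here the finiteness of isomorphisms in each degree and the explicit description of $\mathbf{D}_r$ from \cite{cherradi2026generalized} enter.

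\emph{Step 3 (naturality along all of $R$).} Define $\alpha_b := \beta_d$ for any $d$ over $b$. Given $f : a \to b$ in $R$, the hypothesis on $r$ gives $k : x \to y$ in $R_0$ and isomorphisms $w,w'$ with $w' f = r(k)\, w$. It then suffices to check naturality of $\alpha$ separately along isomorphisms of $R$ and along maps of the form $r(k)$.
\begin{itemize}
\item For an isomorphism $s : a \to b$, there is a morphism in $\mathbf{D}_r$ from $(a \xrightarrow{\mathrm{id}} a)$ to $(a \xrightarrow{\iota s} b)$ whose image under $p$ is $s$.
\item For $r(k)$, there is a morphism $(x \xrightarrow{\mathrm{id}} x) \to (x \xrightarrow{\iota_0 k} y)$ in $\mathbf{Tw}$ whose image under $p$ is $r(k)$.
\end{itemize}
Naturality of $\beta$ along these morphisms, together with Step 2, yields naturality of $\alpha$ along $f$. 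Uniqueness is immediate since $\alpha_b$ is forced by Step 1.

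I expect Step 2 to be the main obstacle, since it requires a concrete understanding of morphisms in the pushout $\mathbf{F}_\simeq(R)$. In particular, one must show that two different free-isomorphism presentations of the same object over $b$ become linked by a zigzag of fibrewise maps. If direct connectivity fails, my fallback is to prove instead that each comma category $p \downarrow b$ has a terminal-like cofinal family, so that $\operatorname{Lan}_p p^* F \cong F$ via a colimit computation. This would again give full faithfulness of $p^*$.
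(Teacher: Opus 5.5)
Your overall plan is a reasonable elementary route: set $\alpha_b := \beta_d$ for any $d$ over $b$, then check naturality along the maps $r(k)$ and along isomorphisms. The paper itself only cites \cite{cherradi2026generalized} for this lemma. Step 1 is fine. Step 2, however, has a genuine gap: its claim is false in the stated generality, and the proposed mechanism cannot work.

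\textbf{Why Step 2 fails.} The pushout defining $\mathbf{F}_\simeq(R)$ only identifies paths of letters $r(k)$, and none of these is invertible, because $r$ preserves degree and $R_0$ is strictly direct. So free isomorphisms are never subject to any relation. In fact the word of free-isomorphism letters of a morphism is an invariant: it is computed by a functor to the free monoid on the non-identity isomorphisms of $R$.

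A vertical morphism $d \to d'$ over $b$ is a $u$ with $d \circ u = d'$. Hence if $d = [s]\circ\iota_0(k)$ and $d' = [s']\circ\iota_0(k')$ with $s \neq s'$, there is no vertical morphism in either direction. Any vertical zig-zag between them must pass through an object $\iota_0(k'') : x \to b$ carrying no free isomorphism. Such objects exist only if $b$ lies in the image of $r$ on objects. In that case $\mathrm{id}_b$ is initial in the fibre, since it maps to every $d'$ via $(d', \mathrm{id}_b)$. So your claim does hold there, but for this reason and not because of pushout relations.

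The hypotheses do not make $r$ surjective on objects. For example, add to $\square^s_\sharp$ an isomorphic copy $c$ of $I^1$ (with the transported monoidal structure), and choose $\sigma : I^1 \cong c$. Then the fibre over $c$ splits into a piece containing $[\sigma]$ and a piece containing $[\sigma\rho]$, with no vertical zig-zag between them.

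Step 3 has the same blind spot: $(a \xrightarrow{\mathrm{id}} a)$ lies in $\mathbf{D}_r$ only when $a$ is of the form $r(x_0)$. You can fix this by applying your argument to $w^{-1}$ and $w'^{-1}$ instead, whose domains are in the image of $r$.

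\textbf{The missing idea.} Instead of fibrewise connectivity, use non-vertical morphisms lying over isomorphisms.
\begin{itemize}
\item For $s : z \to b$ with $z = r(z_0)$, the vertical map $(\iota_0 k, \mathrm{id}_b) : [s] \to [s]\circ\iota_0(k)$ gives $\beta_{[s]\circ\iota_0 k} = \beta_{[s]}$.
\item The morphism $(\mathrm{id}_z, [s]) : \mathrm{id}_z \to [s]$ gives $\beta_{[s]} = G(s)\,\beta_{\mathrm{id}_z}\,F(s)^{-1}$.
\item For an isomorphism $t : z \to z'$ between objects in the image of $r$, take the span $\mathrm{id}_z \to [t] \leftarrow \mathrm{id}_{z'}$ formed by $(\mathrm{id}_z, [t])$ and $([t], \mathrm{id}_{z'})$. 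The second map lies over $\mathrm{id}_{z'}$ because the free isomorphism sits in the domain component. This gives $\beta_{\mathrm{id}_{z'}} = G(t)\,\beta_{\mathrm{id}_z}\,F(t)^{-1}$.
\end{itemize}
Taking $t = s'^{-1}s$ yields $\beta_{[s]} = \beta_{[s']}$, which is exactly the well-definedness you need; the rest of your argument then goes through. Equivalently, you can verify the standard criterion for $p^*$ to be fully faithful: for each arrow $f$ of $R$, the category of factorizations of $f$ through $p$ is connected. The same morphisms establish this.
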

  
 \section{Generalized frames in a tribe}
 
 We now fix a tribe $\mathcal{T}$. Our goal is to carve out a subcategory of fibrant diagrams of $\mathcal{T}^R$ that enjoys a tribe structure.
  
 \begin{definition}
  We define the class of $p$-fibrations as the class of morphisms $m : F \to F'$ in $\mathcal{T}^{R^{op}}$ such that $p^* m$ is a Reedy fibration in $\mathcal{T}^{\mathbf{D}_r^{op}}$.
 \end{definition}
 
 \begin{definition}
  We define an $R$-frame in the tribe $\mathcal{T}$ to be a diagram mapped by $p^*$ to a $\mathbf{D}_r$-frame. We write $R\text{-Fr}(\mathcal{T})$ for the full subcategory of $\mathcal{T}^{R^{op}}$ spanned by the $R$-frames. We define the fibrations between two such diagrams to be the $p$-fibrations.
 \end{definition}
 
 It turns out that this definition yields frames in the expected sense:
 \begin{proposition}
  A diagram $R^{op} \to \mathcal{T}$ is an $R$-frame if and only if it is a $p$-fibrant homotopical diagram, where the category $R$ is given the homotopical structure with all maps weak equivalences.
 \end{proposition}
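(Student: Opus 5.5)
Unwinding the definitions, a diagram $F : R^{op} \to \mathcal{T}$ is an $R$-frame exactly when $p^*F : \mathbf{D}_r^{op} \to \mathcal{T}$ is Reedy fibrant and homotopical, where $\mathbf{D}_r$ carries the structure in which all maps are weak equivalences. By definition, $F$ is $p$-fibrant exactly when $p^*F$ is Reedy fibrant. The fibrancy halves of the two conditions therefore agree verbatim, and the statement reduces to the following claim.

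\emph{Claim.} $F$ is homotopical, i.e. sends every arrow of $R$ to a weak equivalence, if and only if $p^*F$ sends every arrow of $\mathbf{D}_r$ to a weak equivalence.

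For the forward direction, $p^*F = F \circ p^{op}$ and $p$ is a functor. So if $F$ inverts all arrows of $R$ up to weak equivalence, then $F \circ p^{op}$ sends every arrow of $\mathbf{D}_r$ to the image under $F$ of an arrow of $R$, which is a weak equivalence.

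For the converse, I will show that every arrow of $R$ is, up to isomorphism, the image under $p$ of an arrow of $\mathbf{D}_r$. Then 2-out-of-3 in $\mathcal{T}$, together with the fact that $F$ sends isomorphisms to isomorphisms, concludes. Concretely, let $f : a \to b$ be an arrow of $R$. By the lifting hypothesis on $r : R_0 \to R$, there are $k : x \to y$ in $R_0$ and isomorphisms $w : a \simeq r(x)$, $w' : b \simeq r(y)$ with $w' f = r(k) w$.

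It remains to realize $r(k)$ as $p$ of a morphism in $\mathbf{D}_r$. Objects of $\mathbf{D}_r$ include the identity arrows $\iota_0(x) \to \iota_0(x)$ (the composite of an identity from $R_0$ with an identity), and $p$ sends such an object to $r(x)$. In the twisted arrow category $\mathbf{Tw}(\mathbf{F}_\simeq(R))$, a morphism from $(\mathrm{id}_{\iota_0 y})$ to $(\mathrm{id}_{\iota_0 x})$, or in the reverse direction depending on the variance convention, is given by the pair $(\iota_0(k), \mathrm{id})$ for suitable placement. This is precisely the standard map witnessing $\mathrm{id}_x = \mathrm{id}_x \circ \ldots$ factorization, namely $\iota_0(k) \circ \mathrm{id} \circ \ldots$. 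Since $p$ projects onto the codomain composed with $p_0$, and $p_0 \iota_0 = r$, this morphism maps to $r(k)$ (or its opposite, which is what is needed given the variance of $\mathbf{Tw}$ and the direction of $F$).

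Hence $F(r(k))$ is the image under $p^*F$ of an arrow of $\mathbf{D}_r$, and so a weak equivalence. Then $F(f) = F(w)\, F(r(k))\, F(w')^{-1}$ (with appropriate ordering under contravariance) is a composite of isomorphisms and a weak equivalence, hence a weak equivalence.

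The main obstacle is the variance bookkeeping. I need to check that $p : \mathbf{D}_r \to R$, defined as projection to the codomain, is covariant in the right way on twisted-arrow morphisms, and that identity-type objects together with maps induced by $\iota_0(k)$ really lie in the full subcategory $\mathbf{D}_r$ with $p$-image $r(k)$. If the codomain projection turns out to be contravariant for morphisms built from precomposition, I will instead use the objects $\iota_0(k)$ itself (an arrow $x \to y$ from $R_0$, which is allowed) with morphisms to and from the identity objects, and read off $r(k)$ from the codomain component. Failing that, absolute density of $p$ also gives that every arrow of $R$ is a zigzag of images of arrows of $\mathbf{D}_r$ and isomorphisms, which suffices by 2-out-of-3.
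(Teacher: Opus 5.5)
Your argument is correct once you commit to your second fallback, and for the converse it takes a different route from the paper.

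\textbf{The step that needs fixing.} The first morphism you write down does not exist. A twisted-arrow morphism between the identity objects $\mathrm{id}_{\iota_0 x}$ and $\mathrm{id}_{\iota_0 y}$ is a pair $(u,v)$ whose composite $v\circ u$ is an identity. So a pair with components $\iota_0(k)$ and an identity exists only when $\iota_0(k)$ is itself an identity. What works is the morphism $(\mathrm{id}_{\iota_0 x},\iota_0(k))\colon \mathrm{id}_{\iota_0 x}\to\iota_0(k)$. Both objects lie in $\mathbf{D}_r$: each is an $R_0$-arrow followed by no free isomorphism. Since $p$ is the covariant codomain projection, it sends this morphism to $p_0\iota_0(k)=r(k)$. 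Your last fallback via absolute density is not justified as stated, since absolute density only gives factorizations of arrows of $R$ through objects $p(d)$. It is not needed.

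\textbf{Comparison with the paper.} The paper obtains the converse from \cref{constant_diagrams}, which says that $p_*$ sends Reedy fibrant homotopical $\mathbf{D}_r$-diagrams to homotopical $R$-diagrams. That corollary is proved with exact squares along Grothendieck fibrations, comma categories $r\downarrow p_n$ with initial objects, and homotopy right Kan extensions. The paper then implicitly uses $F\cong p_*p^*F$, which follows from absolute density of $p$.

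You instead use the lifting hypothesis on $r$ directly. Every arrow of $R$ is isomorphic in $R^{\rightarrow}$ to some $r(k)$, which is the image of an arrow of $\mathbf{D}_r$. Weak equivalences contain isomorphisms and are closed under composition, so you are done.

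Your argument is purely 1-categorical. It uses neither the Reedy fibrancy of $p^*F$ nor the degree and finiteness assumptions. The paper's corollary is heavier but more general: it applies to Reedy fibrant homotopical $\mathbf{D}_r$-diagrams that are not a priori of the form $p^*F$. That is how it is used in \cref{R_tribe}, so it does not become redundant.
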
 
 
 \begin{proof}
  If a diagram $X$ in $ \mathcal{T}^{R^{op}}_f$ is homotopically constant, then so is $p^*X$ by functoriality. The converse follows directly from \cref{constant_diagrams} below.
 \end{proof}

 It is well-known that left Kan extensions along opfibrations can be computed pointwise directly from fibers rather than from "lax" fibers. More generally, the following lemma is folklore:
 
 \begin{lemma}
 \label[lemma]{pullback_fib_exact}
  A pullback of categories  

\[\begin{tikzcd}[ampersand replacement=\&]
	{B \times_A C} \&\& B \\
	\\
	C \&\& A
	\arrow[from=1-1, to=1-3]
	\arrow[from=1-1, to=3-1]
	\arrow["p", from=1-3, to=3-3]
	\arrow["id"{description}, between={0.3}{0.7}, Rightarrow, from=3-1, to=1-3]
	\arrow["f"', from=3-1, to=3-3]
\end{tikzcd}\]
where $p : B \to A$ is a Grothendieck fibration, is an exact square.
 \end{lemma}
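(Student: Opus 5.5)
The plan is to verify exactness through Guitart's combinatorial criterion, which is phrased entirely in terms of certain categories of factorizations. Write $P := B \times_A C$, with projections $u : P \to B$ and $v : P \to C$, so that an object of $P$ is a pair $(b,c)$ with $p(b) = f(c)$. I will use the criterion in the following form. The square is exact if and only if, for every $b \in B$, every $c \in C$ and every arrow $\alpha : f(c) \to p(b)$ in $A$, the category $\mathcal{E}_\alpha$ is non-empty and connected. Here $\mathcal{E}_\alpha$ has objects the triples $(x, \gamma, \beta)$ with $x \in P$, $\gamma : c \to v(x)$ in $C$ and $\beta : u(x) \to b$ in $B$, subject to $p(\beta)\circ f(\gamma) = \alpha$. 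Its morphisms $(x,\gamma,\beta)\to(x',\gamma',\beta')$ are arrows $\varphi : x \to x'$ in $P$ with $v(\varphi)\gamma = \gamma'$ and $\beta' u(\varphi) = \beta$. If the paper's convention for exact squares runs the other way, the same argument applies with $p$ an opfibration and the dual construction; I will fix the orientation to match the later use of the lemma, namely computing Kan extensions fibrewise.

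\emph{Non-emptiness.} Choose a cartesian lift $\beta_0 : \alpha^*b \to b$ of $\alpha$. Then $p(\alpha^* b) = f(c)$, so $x_0 := (\alpha^*b, c)$ is an object of $P$. The triple $(x_0, \mathrm{id}_c, \beta_0)$ is an object of $\mathcal{E}_\alpha$, which I call the canonical object.

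\emph{Connectedness.} I will link an arbitrary object $(x,\gamma,\beta)$, with $x = (b'', c')$, to the canonical one by a zigzag of length two.
\begin{enumerate}
\item Set $g := p(\beta) : f(c') \to p(b)$ and take a cartesian lift $g^*b \to b$. Cartesianness factors $\beta$ as $b'' \xrightarrow{\delta} g^*b \to b$ with $\delta$ vertical, that is, $p(\delta) = \mathrm{id}_{f(c')}$. Hence $(\delta, \mathrm{id}_{c'})$ is a morphism of $P$, and it gives an arrow in $\mathcal{E}_\alpha$ from $(x,\gamma,\beta)$ to $((g^*b, c'), \gamma, g^*b \to b)$.
\item Since $g\circ f(\gamma) = \alpha$, the cartesian property of $g^*b \to b$ yields a unique $\epsilon : \alpha^*b \to g^*b$ lying over $f(\gamma)$ and compatible with the two maps to $b$.
\item The pair $(\epsilon, \gamma)$ is then a morphism of $P$, and it gives an arrow in $\mathcal{E}_\alpha$ from the canonical object to $((g^*b, c'), \gamma, g^*b\to b)$.
\end{enumerate}
So every object is joined to the canonical object, and $\mathcal{E}_\alpha$ is connected.

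The only delicate point is bookkeeping rather than mathematics. One must check that the two constructed pairs really are morphisms of the strict pullback $P$, meaning that their images in $A$ agree: $p(\delta) = \mathrm{id} = f(\mathrm{id}_{c'})$ and $p(\epsilon) = f(\gamma)$. One must also make sure the orientation of the criterion matches the paper's convention for exact squares. Both checks follow directly from the defining property of cartesian arrows.
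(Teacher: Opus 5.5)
Your proof is correct, but it takes a different route from the paper's. You check Guitart's combinatorial criterion directly. You build a canonical factorization $(x_0,\mathrm{id}_c,\beta_0)$ from a cartesian lift of $\alpha$, and you join every object to it by a cospan obtained from two further uses of cartesianness. The bookkeeping holds: $(\delta,\mathrm{id}_{c'})$ and $(\epsilon,\gamma)$ are arrows of the strict pullback, and they satisfy the two compatibilities required in $\mathcal{E}_\alpha$.

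Your hedge about orientation is unnecessary. $\mathcal{E}_\alpha$ is exactly the comma category $K\downarrow(b,\alpha)$ of the comparison functor $K : c\downarrow f^*p \to f(c)\downarrow p$, $(x,\gamma)\mapsto (u(x),f(\gamma))$. This matches the paper's convention: the paper works with $c\downarrow f^*p$ and with right Kan extensions along the fibration.

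The paper argues structurally instead. It pastes the square with the comma square at $c$. It then uses twice (for $f^*p$ and for $p$) that, for a fibration $q$, the fibre inclusion $q^{-1}(a)\to a\downarrow q$ has a right adjoint given by cartesian lifting, and is therefore initial. This reduces exactness to the fibrewise computation of right Kan extensions along $p$.

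Here is what each route buys. Yours is elementary and self-contained, but it yields only non-emptiness and connectedness, that is, ordinary (Guitart) exactness. That is what the statement asserts, and it suffices for commuting the strict right Kan extension with restriction as in the later corollary. The paper's left-adjoint argument carries over directly to homotopy exactness, since left adjoints are homotopy initial.

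If you want that stronger conclusion, your setup extends easily. The full subcategory of $\mathcal{E}_\alpha$ on objects with $\gamma=\mathrm{id}_c$ has $(x_0,\mathrm{id}_c,\beta_0)$ as a terminal object. It is also coreflective: the right adjoint sends $(x,\gamma,\beta)$ to $((f(\gamma)^*u(x),c),\mathrm{id}_c,\beta\theta)$, with $\theta$ a cartesian lift of $f(\gamma)$ at $u(x)$. Hence $\mathcal{E}_\alpha$ is weakly contractible.
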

 
 \begin{proof}
  Consider an object $c$ of $C$. It will be enough to check that composing on the left with the comma square below induces an exact square,

\[\begin{tikzcd}[ampersand replacement=\&]
	{(f^*p)^{-1}(c)} \& {c \downarrow f^*p} \&\& {B \times_A C} \&\& B \\
	\\
	\& {*} \&\& C \&\& A
	\arrow[""{name=0, anchor=center, inner sep=0}, dashed, from=1-1, to=1-2]
	\arrow[from=1-1, to=3-2]
	\arrow[""{name=1, anchor=center, inner sep=0}, "G"', shift right=3, curve={height=12pt}, from=1-2, to=1-1]
	\arrow[from=1-2, to=1-4]
	\arrow[from=1-2, to=3-2]
	\arrow[from=1-4, to=1-6]
	\arrow["{f^*p}"', from=1-4, to=3-4]
	\arrow["p", from=1-6, to=3-6]
	\arrow[between={0.3}{0.7}, Rightarrow, from=3-2, to=1-4]
	\arrow["c"', from=3-2, to=3-4]
	\arrow["id"{description}, between={0.3}{0.7}, Rightarrow, from=3-4, to=1-6]
	\arrow["f"', from=3-4, to=3-6]
	\arrow["\dashv"{anchor=center, rotate=99}, draw=none, from=0, to=1]
\end{tikzcd}\]
where $f^*p$ is a fibration as a pullback of one.

But the canonical map $(f^*p)^{-1}(c) \to c \downarrow f^*p$ admits a right adjoint $G$ (sending $c \to f^*p(x)$ to the domain of a cartesian lift). Therefore, $(f^*p)^{-1}(c) \to c \downarrow f^*p$ is an initial functor as a left adjoint, and the following square is exact:
\[\begin{tikzcd}[ampersand replacement=\&]
	{(f^*p)^{-1}(c)} \&\& {c \downarrow f^*p} \\
	\\
	{*} \&\& {*}
	\arrow[from=1-1, to=1-3]
	\arrow[from=1-1, to=3-1]
	\arrow[from=1-3, to=3-3]
	\arrow[between={0.3}{0.7}, Rightarrow, from=3-1, to=1-3]
	\arrow[from=3-1, to=3-3]
\end{tikzcd}\]

Hence, it is equivalent to establish that the outer rectangle in the first diagram is exact.
Again, since $p$ is a fibration by assumption, the functor $p^{-1}(fc) \to fc \downarrow p$ is initial so that the composite
\[\begin{tikzcd}[ampersand replacement=\&]
	{p^{-1}(fc) \simeq (f^*p)^{-1}(c)} \&\& {fc \downarrow p} \&\& B \\
	\\
	\&\& {*} \&\& A
	\arrow[""{name=0, anchor=center, inner sep=0}, dashed, from=1-1, to=1-3]
	\arrow[from=1-1, to=3-3]
	\arrow[""{name=1, anchor=center, inner sep=0}, shift right=3, curve={height=12pt}, from=1-3, to=1-1]
	\arrow[from=1-3, to=1-5]
	\arrow[from=1-3, to=3-3]
	\arrow["p", from=1-5, to=3-5]
	\arrow[between={0.3}{0.7}, Rightarrow, from=3-3, to=1-5]
	\arrow["fc"', from=3-3, to=3-5]
	\arrow["\dashv"{anchor=center, rotate=113}, draw=none, from=0, to=1]
\end{tikzcd}\]
is exact. We are therefore able to conclude.
 \end{proof}
 
We write $(\mathbf{D}_r)_{\leq n}$ for the full subcategory of $\mathbf{D}_r$ spanned by the arrows whose codomain's degree is at most $n$.
 
 \begin{corollary}
 \label[corollary]{constant_diagrams}
  The functor $$p_* : \mathcal{T}^{\mathbf{D}_r^{op}}_f \to \mathcal{T}^{R^{op}}$$ maps Reedy fibrant diagrams that are homotopically constant to homotopically constant diagrams.
 \end{corollary}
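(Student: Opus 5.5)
The plan is to compute $p_*$ fiberwise and then reduce homotopical constancy of $p_*X$ to homotopical constancy of $X$ on the fibers of $p$. Here $p_*$ is right Kan extension along $p^{op} : \mathbf{D}_r^{op} \to R^{op}$.

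First I will check that $p : \mathbf{D}_r \to R$, projection onto the codomain, is a Grothendieck fibration of the variance that \cref{pullback_fib_exact} requires, after passing to opposites as needed. Given an object $x \to z \to b$ of $\mathbf{D}_r$ and a map $f : a \to b$ in $R$, I will build a cartesian lift as follows:
\begin{itemize}
\item lift $f$ up to isomorphism to a map $r(k)$ of $R_0$, using the essential surjectivity of $r^\rightarrow$;
\item write the resulting arrow in $\mathbf{F}_\simeq(R)$ as a map of $R_0$ followed by a free isomorphism, so that it is again an object of $\mathbf{D}_r$;
\item check the universal property directly in the twisted arrow category.
\end{itemize}
With this in hand, \cref{pullback_fib_exact}, applied with $C = *$ and $f = a$, shows that the square formed by the fiber $p^{-1}(a)$ is exact. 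Hence $(p_*X)(a) \cong \lim_{p^{-1}(a)^{op}} X|_{p^{-1}(a)}$. This limit exists in $\mathcal{T}$ for three reasons: the fiber is a finite direct category, since $R$ has finitely many objects and isomorphisms in each degree; the inclusion of the fiber is a discrete (op)fibration on the relevant part; and restriction of a Reedy fibrant diagram along such a sieve-like inclusion stays Reedy fibrant, so the limit is built from iterated pullbacks along fibrations, as in \cref{cotensor_reedy_cubes}.

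The second step is to show that each fiber $p^{-1}(a)$ has an object $e_a$ that is initial in $p^{-1}(a)$, i.e.\ terminal for the diagram on $p^{-1}(a)^{op}$. I expect $e_a$ to be the object $\iota_0(\mathrm{id}) : a \to a$ followed by the identity, after replacing $a$ by an isomorphic $r(x)$ if necessary. Granting this, a Reedy fibrant diagram $Y$ on a finite direct category $I$ with such an object, all of whose structure maps are weak equivalences, has its limit mapping by a weak equivalence to $Y(e_a)$. I will prove this by induction along a skeletal filtration of $I$ by degree. At each stage the limit is a pullback of a matching-object fibration, and homotopical constancy of $Y$ together with 2-out-of-3 makes the comparison maps trivial fibrations, exactly as in the proofs of \cref{cotensor_reedy_cubes} and \cref{cotensor_reedy_cubes_gap}.

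Finally, for $f : a \to b$ in $R$, the map $(p_*X)(f)$ is induced by the reindexing functor $f^* : p^{-1}(b) \to p^{-1}(a)$ obtained from cartesian lifts. It sits in a commutative square with the projections to $X(e_b)$ and $X(e_a)$ and a structure map of $X$ between these two values. The two projections are weak equivalences by the previous step, and the structure map is a weak equivalence by homotopical constancy of $X$. Two-out-of-three then shows that $(p_*X)(f)$ is a weak equivalence.

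I expect the main obstacle to be the second step: the combinatorics of $\mathbf{D}_r$, namely the existence of the distinguished object in each fiber and the verification that restricting the Reedy fibrant $X$ to a fiber stays Reedy fibrant. If $e_a$ fails to be initial, I would instead try to show that its inclusion $* \to p^{-1}(a)$ is homotopy initial, using that $\mathbf{D}_r$ is built from the free category with freely adjoined isomorphisms. In that case any two objects over $a$ differ by a free isomorphism composed with an $R_0$-map, and these are all sent to weak equivalences.
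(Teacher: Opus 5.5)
\textbf{The gap is in your first step.} The functor $p:\mathbf{D}_r\to R$ is neither a Grothendieck fibration nor an opfibration, so $(p_*X)(a)$ is not a limit over the fiber $p^{-1}(a)$.

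Write a morphism of $\mathbf{D}_r$ as $(u,v):d\to d'$ with $d'=v\circ d\circ u$, which is the paper's twisted-arrow convention, so that $p(u,v)=p_0(v)$.

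\emph{No cartesian lifts.} Cartesian lifts as you set them up cannot exist. No object over $I^0$ maps to $\mathrm{id}_{I^1}$, since that would require an arrow $I^1\to I^0$.

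\emph{No opcartesian lifts either.} For $p_*=\mathrm{Ran}_{p^{op}}$ you would actually need $p$ to be an opfibration, and then the reindexing in your last step would go $p^{-1}(a)\to p^{-1}(b)$, not the other way. This fails because of the free isomorphisms. In $\square^s_\sharp$, take $k_0:[-1]\to[0]$ and $k:[-1]\to[1]$.
\begin{itemize}
\item The only morphism out of $\mathrm{id}_{I^0}$ over $\delta_1=\rho\delta_0$ is $(\mathrm{id},[\rho]\iota_0(k_0))$.
\item The morphism $(\mathrm{id},\iota_0(k))$ lies over $\delta_0\otimes\delta_0=(\delta_0\otimes\rho)\circ\delta_1$.
\item It does not factor through $(\mathrm{id},[\rho]\iota_0(k_0))$. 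The pushout defining $\mathbf{F}_\simeq(R)$ imposes only the relations of $R_0$. Counting generators outside the image of $r$ therefore gives a functor $\mathbf{F}_\simeq(R)\to(\mathbb{N},+)$, so $[\rho]$ can never be absorbed.
\end{itemize}
So the universal property in your third bullet fails.

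\emph{Consequence.} The category that actually computes $(p_*X)(a)$ is the full comma category $a\downarrow p^{op}\cong(p\downarrow a)^{op}$. The fiber inclusion into it is not final: in $p\downarrow I^1$, the object $([\rho],\rho)$ has no morphism to any $(e,\mathrm{id}_{I^1})$. Hence your second step (initiality of $\mathrm{id}_a$ in the fiber, which is true) and your fallback do not bear on $p_*X$. Fibers are also not sieves, so restricting to them need not preserve Reedy fibrancy; it is the discrete fibration $p\downarrow a\to\mathbf{D}_r$ that does. Your argument works exactly when there are no free isomorphisms, e.g. for $R=R_0$. Then $\mathbf{D}_r=\mathbf{Tw}(R_0)$, and the codomain functor is an opfibration with fibers $(R_0/a)^{op}$.

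\textbf{What the paper does, and a warning.} The paper never passes through fibers. It restricts to the sieve $R_{\le n}$ and works with the comma category directly. It asserts that this category has an initial object (the identity at $r$), hence a contractible nerve, and concludes from the behaviour of homotopy limits of homotopically constant diagrams. So the ingredient your proposal is missing is contractibility of $N(p\downarrow a)$, naturally in $a$.

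By my computation, however, this fails for $R=\square^s_\sharp$ and $a=I^1$, with $\mathbf{D}_r$ as defined here:
\begin{itemize}
\item $p\downarrow I^1$ is a poset with ten objects.
\item Every object lies below one of the two maximal objects $([\rho]\iota_0(k_0),\mathrm{id})$ and $([\rho]\iota_0(k_0),\rho)$.
\item Exactly the two incomparable objects $(\mathrm{id}_{I^1},\mathrm{id})$ and $(\mathrm{id}_{I^1},\rho)$ lie below both.
\end{itemize}
The nerve is therefore two cones glued along $S^0$, i.e. a circle, and $(\mathrm{id}_{I^1},\mathrm{id})$ is not initial. If this is right, take $X$ a Reedy fibrant replacement of a constant diagram at $x$. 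Then $(p_*X)(\delta_0)$ is the evaluation $x^{S^1}\to x$ in $\mathbf{Ho}_\infty(\mathcal{T})$. This is not an equivalence in general (for instance in Kan complexes, with $x$ a circle). So neither route can prove the statement in this generality.

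What does hold is the case $X\cong p^*Y$, which is what the converse direction of the preceding proposition uses:
\begin{itemize}
\item $p_*X\cong Y$ by absolute density.
\item Every arrow of $R$ has the form $w'^{-1}r(k)w$ with $w,w'$ invertible.
\item $Y(r(k))$ is the value of $X$ on the morphism $(\mathrm{id},\iota_0(k))$ of $\mathbf{D}_r$, hence a weak equivalence.
\end{itemize}
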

 
 \begin{proof}
  Let $X$ be a homotopically constant Reedy fibrant diagram of shape $\mathbf{D}_r^{op}$. It is enough to prove that the restriction of $p_* X$ to the full subcategory $R^{op}_{\leq n}$ of $R^{op}$ spanned by the objects of degree at most $n$ is homotopically constant, for every natural number $n$. Fix such a natural number $n$.
  We have the following pullback square, which is also an exact square (as a pullback along a Grothendieck fibration by \cref{pullback_fib_exact}):

\[\begin{tikzcd}[ampersand replacement=\&]
	{(\mathbf{D}_r)_{\leq n}} \&\& {\mathbf{D}_r} \\
	\\
	{R_{\leq n}} \&\& R
	\arrow[from=1-1, to=1-3]
	\arrow["{p_n}"', from=1-1, to=3-1]
	\arrow["\ulcorner"{anchor=center, pos=0.125}, draw=none, from=1-1, to=3-3]
	\arrow["id"{description}, between={0.3}{0.7}, Rightarrow, from=1-3, to=3-1]
	\arrow["p", from=1-3, to=3-3]
	\arrow[from=3-1, to=3-3]
\end{tikzcd}\]

 We may further consider the following comma square,

\[\begin{tikzcd}[ampersand replacement=\&]
	{r \downarrow p_n} \&\& {(\mathbf{D}_r)_{\leq n}} \\
	\\
	{*} \&\& {R_{\leq n}}
	\arrow["{q_n}", from=1-1, to=1-3]
	\arrow["{!}"', from=1-1, to=3-1]
	\arrow["{p_n}", from=1-3, to=3-3]
	\arrow[between={0.3}{0.7}, Rightarrow, nfold, from=3-1, to=1-3]
	\arrow["r"', from=3-1, to=3-3]
\end{tikzcd}\]
where $r \downarrow p_n$ is finite and admits an initial object (given by the identity arrow at $r$). In particular, $r \downarrow p_n$ has a contractible nerve.

This implies that the $\infty$-functor $$\mathbf{lim} : \mathbf{Ho}_\infty(\mathcal{T})^{(r \downarrow p_n)^{op}} \to \mathbf{Ho}_\infty(\mathcal{T})$$ maps a (homotopically) constant diagram, say at $x \in \mathcal{T}$, to $x$. In other words, the following square is exact:
\[\begin{tikzcd}[ampersand replacement=\&]
	{r \downarrow p_n} \&\& {*} \\
	\\
	{*} \&\& {*}
	\arrow[from=1-1, to=1-3]
	\arrow[from=1-1, to=3-1]
	\arrow[from=1-3, to=3-3]
	\arrow["id"{description}, between={0.3}{0.7}, Rightarrow, nfold, from=3-1, to=1-3]
	\arrow[from=3-1, to=3-3]
\end{tikzcd}\]

In particular, if $X \in \mathcal{T}^{(\mathbf{D}_r)_{\leq n}^{op}}_f$ is a homotopically constant diagram (at some $x \in \mathcal{T}$), so is its restriction $q_n^* X$, so that the homotopy limit $l_X$ of the latter diagram is equivalent to $x$. This shows that the homotopy right Kan extension of $X$ along $p_n$ is also constant at $x$. Since the right Kan extension functor $(p_n)_*$ computes the correct homotopy right Kan extension for fibrant diagrams, we can conclude.
\end{proof}

 \begin{theorem}
 \label[theorem]{R_tribe}
  $R\text{-Fr}(\mathcal{T})$ enjoys the structure of a tribe.
 \end{theorem}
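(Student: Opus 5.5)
The plan is to transport the tribe structure of $\mathbf{D}_r\text{-Fr}(\mathcal{T})$ along the fully faithful functor $p^* : \mathcal{T}^{R^{op}} \to \mathcal{T}^{\mathbf{D}_r^{op}}$. Full faithfulness follows from absolute density of $p$, since precomposition with an absolutely dense functor is fully faithful for diagrams valued in any category. By \cref{J_frames_th}, applied with $J = \mathbf{D}_r$, which is direct, the category $\mathbf{D}_r\text{-Fr}(\mathcal{T})$ is a tribe with Reedy fibrations as fibrations. By definition, $R\text{-Fr}(\mathcal{T})$ is the full subcategory of $\mathcal{T}^{R^{op}}$ of diagrams sent into $\mathbf{D}_r\text{-Fr}(\mathcal{T})$, and its fibrations are those maps sent to fibrations. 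So $p^*$ identifies $R\text{-Fr}(\mathcal{T})$ with a full, replete subcategory $\mathcal{E}$ of $\mathbf{D}_r\text{-Fr}(\mathcal{T})$, namely the essential image, whose fibrations are created by $p^*$. It therefore suffices to show that $\mathcal{E}$ is a sub-tribe: it contains the terminal object, it is closed under pullbacks along fibrations, and it is closed under the factorization of a map into an anodyne map followed by a fibration. The notion of anodyne map must also be shown to be reflected correctly.

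\textbf{Terminal objects and pullbacks.} The terminal object lies in $\mathcal{E}$ since $p^*$ preserves limits. Pullbacks in $\mathcal{T}^{R^{op}}$ along $p$-fibrations are computed pointwise. Indeed, for any $p$-fibration each component is a fibration, because $p$ is surjective on objects and Reedy fibrations are pointwise fibrations. Since $p^*$ commutes with pointwise pullbacks, a pullback of $R$-frames along a $p$-fibration is sent to a pullback of $\mathbf{D}_r$-frames along a Reedy fibration. That image is again a frame, so the pullback is an $R$-frame, and $p$-fibrations are stable under this base change. Isomorphisms and composites of $p$-fibrations are clearly $p$-fibrations.

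\textbf{Factorizations: the main obstacle.} Given a map $f : F \to G$ of $R$-frames, one cannot simply factor $p^*f$ in $\mathbf{D}_r\text{-Fr}(\mathcal{T})$, because the middle object need not lie in the image of $p^*$. My first attempt is to use the right adjoint $p_* : \mathcal{T}^{\mathbf{D}_r^{op}} \to \mathcal{T}^{R^{op}}$ (right Kan extension). Its counit at $p^*F$ satisfies $p_*p^* \cong \mathrm{id}$ by full faithfulness. I would factor $p^*f$ as an anodyne map $p^*F \to Z$ followed by a Reedy fibration $Z \to p^*G$, then apply $p_*$ to obtain $F \to p_*Z \to G$. \Cref{constant_diagrams} shows that $p_*Z$ is homotopically constant.

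One must then check three things. First, $p_*Z$ is $p$-fibrant, equivalently $p^*p_*Z$ is Reedy fibrant over $G$. Second, $F \to p_*Z$ is anodyne. Third, $p_*$ of a Reedy fibration is a $p$-fibration. For the first and third points, I would use the filtration of $\mathbf{D}_r$ by $(\mathbf{D}_r)_{\le n}$ together with the exact pullback squares of \cref{pullback_fib_exact}. These reduce the matching objects of $p^*p_*Z$ to limits over the finite comma categories $r \downarrow p_n$, which have initial objects. The resulting matching maps are built from matching maps of $Z$ by pullbacks and finite products, as in \cref{cotensor_reedy_cubes}, and are therefore fibrations.

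If this direct approach fails, I would instead run the factorization inductively on degree in $R$. At each degree, the isomorphisms act through finite groups, and the finiteness hypotheses make the required products finite, which allows the Reedy factorization to be built equivariantly.

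\textbf{Anodyne maps.} A map is anodyne precisely when it has the left lifting property against fibrations. Since $p^*$ is fully faithful and creates fibrations, lifting problems in $R\text{-Fr}(\mathcal{T})$ correspond exactly to lifting problems in $\mathcal{E}$. Anodyne maps in $\mathbf{D}_r\text{-Fr}(\mathcal{T})$ between objects of $\mathcal{E}$ are therefore anodyne in $R\text{-Fr}(\mathcal{T})$. Stability of anodyne maps under pullback along fibrations is inherited from $\mathbf{D}_r\text{-Fr}(\mathcal{T})$ by the pullback computation above. This reduces the claim that $F \to p_*Z$ is anodyne to showing that $p^*p_*$ preserves the relevant anodyne map, or to replacing it by a retract argument. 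I expect this, together with the fibrancy of $p_*Z$, to be the real work.
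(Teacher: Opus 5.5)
\textbf{There is a genuine gap: the factorization step, which is the real content of the theorem, is not proved.}

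Your handling of the terminal object, composition, and pullbacks along $p$-fibrations is correct. It is also more direct than the paper's, which invokes \cref{constant_diagrams}. Since $R$-frames are defined by $p^*$ landing in $\mathbf{D}_r$-frames, and $p^*$ preserves the pointwise pullbacks, closure is immediate.

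You reduce factorization to three claims about $p_*$ on Reedy fibrant diagrams:
\begin{enumerate}
\item $p_*Z$ exists in $\mathcal{T}$ at all. A tribe only has pullbacks along fibrations, so each $(p_*Z)(y)$, a limit of $Z$ over a comma category of $p$, must be built as an iterated pullback along fibrations.
\item $p_*$ sends Reedy fibrations to $p$-fibrations, i.e.\ $p^*p_*$ preserves Reedy fibrations.
\item $p_*$ sends the anodyne map $p^*F \to Z$ to an anodyne map of $R$-frames.
\end{enumerate}
None of these is established.
\begin{itemize}
\item Your sentence that the matching maps are ``built from matching maps of $Z$ by pullbacks and finite products, as in \cref{cotensor_reedy_cubes}'' is not an argument. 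The matching objects of $p^*p_*Z$ are limits of $Z$ over comma categories of $p$, not the skeletal-filtration limits of \cref{cotensor_reedy_cubes}, and $p^*p_*Z$ differs from $Z$ in general.
\item You leave the anodyne part explicitly open.
\item The equivariant, degree-by-degree fallback is only a gesture.
\end{itemize}
What you need is exactly that $p_*$ restricts to a tribe morphism from Reedy fibrant $\mathbf{D}_r$-diagrams to $p$-fibrant $R$-diagrams. This is what underlies the tribe structure on $p$-fibrant diagrams. The paper does not reprove it; it imports it from \cite[Theorem 2.3]{cherradi2026generalized} (compare the use of \cite[Proposition 2.2]{cherradi2026generalized} in \cref{rcan_frame}). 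With that in hand, the paper only checks that frames are closed under the tribe operations. Factorizations restrict because the middle object is pointwise weakly equivalent to a frame, so it is homotopical by 2-out-of-3.

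Your anodyne paragraph has a related hole. A map of $R$-frames may lift against all $p$-fibrations between $R$-frames without its image under $p^*$ lifting against all Reedy fibrations between $\mathbf{D}_r$-frames. So pullback-stability of anodyne maps is not simply ``inherited''. You first need $p^*$ to send anodyne maps of $R\text{-Fr}(\mathcal{T})$ to anodyne maps. That would follow by a retract argument once you know the first factor $F \to p_*Z$ of your factorization is pointwise anodyne: $p^*$ of it is then anodyne among Reedy fibrant diagrams by \cite[Lemma 2.22]{ks2019internal}, which is how the paper argues. This again rests on the missing property of $p_*$.
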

 
 \begin{proof}
  We already know from \cite[Theorem 2.3]{cherradi2026generalized} that the subcategory of $p$-fibrant diagrams in $\mathcal{T}^{R^{op}}$ admits a tribe structure with the fibrations being the $p$-fibrations. We still need to check that this restricts to a tribe structure on $R\text{-Fr}(\mathcal{T})$.
  
  The class of $p$-fibrations is stable under pullback (using \cref{constant_diagrams}) and composition. Moreover, pointwise anodyne maps are stable under pullback along $p$-fibrations because the pointwise anodyne maps in $\mathcal{T}^{\mathbf{D}_r^{op}}_f$ are the anodyne maps by \cite[Lemma 2.22]{ks2019internal}
  Overall, this means that $R\text{-Fr}(\mathcal{T})$ also enjoys a tribe structure, with the fibrations the $p$-fibrations.
\end{proof}

The construction also works with $\pi$-tribes:

\begin{proposition}
\label[proposition]{pitribe_frames}
 Assume that $R$ satisfies the assumptions made on $J$ in \cref{jframes_pi}.
 If $\mathcal{T}$ is a $\pi$-tribe, then so is $R\text{-Fr}(\mathcal{T})$.
\end{proposition}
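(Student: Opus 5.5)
Since $R\text{-Fr}(\mathcal{T})$ is already a tribe by \cref{R_tribe}, it remains to build dependent products along $p$-fibrations, stable under base change. The plan is to transport the $\pi$-structure from $\mathbf{D}_r$-frames through the adjunction $p^* \dashv p_*$. The direct category $\mathbf{D}_r$ has a degree function inherited from $R$ (via the codomain projection). Because $R$ has finitely many objects and isomorphisms in each degree, the subcategories $(\mathbf{D}_r)_{\leq n}$ are finite. So $\mathbf{D}_r$ satisfies the hypotheses of \cref{jframes_pi}, and $\mathbf{D}_r\text{-Fr}(\mathcal{T})$ is a $\pi$-tribe. The first thing I would check is that these finiteness conditions really hold for $\mathbf{D}_r$, which requires going back to \cref{def_dr}.

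Let $q : E \to B$ be a $p$-fibration between $R$-frames and $s : F \to E$ a $p$-fibration. Then $p^*q$ and $p^*s$ are fibrations of $\mathbf{D}_r$-frames, and we can form $\Pi_{p^*q}(p^*s) \to p^*B$ there. I would define the candidate $\Pi_q s$ as the pullback of $p_*\Pi_{p^*q}(p^*s) \to p_*p^*B$ along the unit $B \to p_*p^*B$. Since $p^*$ is fully faithful (absolute density of $p$), this unit is an isomorphism, so no pullback is needed. The universal property then follows formally: $p^*$ is fully faithful and commutes with pullbacks, and $p_*$ is right adjoint to $p^*$. Hence, for $Z \to B$, maps $Z \to \Pi_q s$ over $B$ correspond to maps $p^*Z \to \Pi_{p^*q}(p^*s)$ over $p^*B$, hence to maps $p^*Z \times_{p^*B} p^*E \to p^*F$ over $p^*E$, hence to maps $Z\times_B E \to F$ over $E$. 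Stability under base change follows similarly, since $p^*$ preserves pullbacks.

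The main obstacle is showing that $\Pi_q s$ is an $R$-frame and that $\Pi_q s \to B$ is a $p$-fibration. This amounts to showing that $p^*p_* G \cong G$ for $G = \Pi_{p^*q}(p^*s)$, i.e. that $G$ lies in the essential image of $p^*$. I would argue that the essential image of $p^*$ is closed under dependent products. Diagrams in this image are exactly those that are invariant under the free isomorphisms of $\mathbf{D}_r$: they send the arrows of $\mathbf{D}_r$ that become identities in $R$ to identities, compatibly with the actions of isomorphisms. Dependent products in $\mathcal{T}^{\mathbf{D}_r^{op}}_f$ are computed degreewise by the Kapulkin–Lumsdaine construction (\cref{htp_diag_trb}), using matching objects. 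I would check that this construction is natural with respect to automorphisms of $\mathbf{D}_r$ over $R$, or alternatively use the functoriality in the second item of \cref{htp_diag_trb} for the discrete (op)fibrations relating $\mathbf{D}_r$ to itself.

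If this invariance argument fails strictly, the fallback is to replace $p_*G$ by its $p$-fibrant form. I would then show that it still represents the dependent product up to isomorphism, using the fact that $p_*$ preserves Reedy fibrations as a right adjoint to a functor preserving the relevant cofibrations (the generalized Reedy structure from \cite{cherradi2026generalized}). Finally, homotopical constancy of $\Pi_q s$ would follow from \cref{constant_diagrams} applied to $G$, which is homotopically constant by \cref{jframes_pi}.
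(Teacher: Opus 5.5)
Your overall route is the paper's: set $\Pi_q s := p_*\Pi_{p^*q}(p^*s)$, and get homotopical constancy from \cref{jframes_pi} applied to $\mathbf{D}_r$ together with \cref{constant_diagrams}. Your adjunction computation of the universal property is also fine; it uses $p_*p^*B\cong B$, full faithfulness of $p^*$, and preservation of pullbacks.

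\textbf{The gap.} It is the step you yourself call the main obstacle: you never actually show that $p_*G\to B$ is a $p$-fibration.
\begin{itemize}
\item Reducing this to $p^*p_*G\cong G$ asks for more than is needed.
\item The invariance argument is only a promissory note. $\mathbf{D}_r$ is direct, and the Kapulkin--Lumsdaine value of $G$ at $d$ is built from the matching object at $d$. That object depends on $d$, not only on $p(d)$, so naturality alone does not show that $G$ factors through $R$.
\item The fallback is unsound as stated. A $p$-fibrant replacement of $p_*G$ is only weakly equivalent to it. It has no reason to satisfy the strict universal property that defines a dependent product, which you showed is satisfied by $p_*G$ itself.
\end{itemize}

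\textbf{How to close it.} The ingredient you mention in passing is the right one, but it must be used directly, with no replacement: $p_*$ sends Reedy fibrations of $\mathbf{D}_r$-diagrams to $p$-fibrations. This is what the paper imports from \cite{cherradi2026generalized} when it appeals to the construction of the dependent product in $\mathcal{T}^{R^{op}}_f$, given by $p_*\Pi_{p^*g}(p^*f)$. It is also the content of $p_*$ being a tribe morphism (\cite[Proposition 2.2]{cherradi2026generalized}, used in \cref{rcan_frame}). The paper then only has to add homotopical constancy.

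You can also check it by hand, in the spirit of \cref{g_cotensor_reedy_cubes}:
\begin{itemize}
\item We have $(p^*p_*X)^L\cong X^{p^*p_!L}$.
\item Hence the relative matching map of $p^*p_*G\to p^*p_*p^*B\cong p^*B$ at $d$ is the gap map of the fibration of $\mathbf{D}_r$-frames $G\to p^*B$. It is taken with respect to the monomorphism of finite presheaves $p^*p_!(\partial\mathbf{D}_r^d)\to p^*p_!(\mathbf{D}_r^d)$.
\item By \cref{cotensor_reedy_cubes_gap}, this gap map is a fibration.
\end{itemize}
So $p_*G$ is a $p$-fibrant homotopical diagram, fibred over $B$, hence an $R$-frame, and your argument goes through.

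\textbf{A minor point.} The degree on $\mathbf{D}_r$ cannot be the codomain degree. Twisted-arrow maps that only precompose on the domain lie over identities of $R$. So finiteness of $(\mathbf{D}_r)_{\le n}$ needs its own check; the paper also just asserts it.
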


\begin{proof}

 Given two $p$-fibrations $f : A \to B$ and $g : B \to C$, we claim that $p_* \Pi_{p^* g}(p^* f)$, which defines an internal product $\Pi_g f$ of $f$ along $g$, is an $R$-frame. But $\Pi_{p^* g}(p^* f)$ is homotopically constant since $A$, $B$ and $C$ are (applying \cref{jframes_pi}, since under the assumptions made on $R$, the same hold for $\mathbf{D}_r$). Therefore, the claim follows directly by definition of $R$-frames and by construction of the dependent product in $\mathcal{T}^{R^{op}}_f$.

This is enough to conclude that $R\text{-Fr}(\mathcal{T})$ forms a $\pi$-tribe.
\end{proof}

\section{Enrichment for \texorpdfstring{$R$}{\textbf{R}}-tribes}

In what follows, we assume that $R_0$ and $R$ are endowed with the structures of monoidal categories, and that the functor $R_0 \to R$ is (strong) monoidal.

\begin{definition}
 We equip $\mathbf{D}_r$ with a monoidal product as follows. Given two objects of $\mathbf{D}_r$ represented by arrows $x \to y$ and $x' \to y'$ in $R$ possibly followed by "free" isomorphisms $w : y \to z$ and $w' : y' \to z'$, we define their monoidal product as the arrow $x \otimes x' \to y \otimes y'$ in $R_0$ followed by the automorphism $w \otimes w'$ (if both lists of isomorphisms are empty, then we take the empty list). The axioms for a monoidal category then follow from those for the monoidal structure on $R_0$ and $R$.
\end{definition}

Note that the functor $p$ is monoidal with this definition of monoidal structure on $\mathbf{D}_r$.

 \begin{lemma}
 \label[lemma]{strong_lan}
 The functor $$p_! : \mathbf{Set}^{\mathbf{D}_r^{op}} \to \mathbf{Set}^{R^{op}}$$ is canonically a (strong) monoidal functor.
 \end{lemma}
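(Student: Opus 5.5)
The plan is to reduce the claim to a general fact about Day convolution. Left Kan extension along a strong monoidal functor between small monoidal categories is strong monoidal for the Day convolution products. We have already noted that $p : \mathbf{D}_r \to R$ is strong monoidal for the monoidal structure just defined on $\mathbf{D}_r$. So the statement follows once we write down the comparison maps and check they are invertible.

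\textbf{Construction of the comparison map.} I would first build a natural map $\mu_{K,L} : p_!(K \otimes L) \to p_!K \otimes p_!L$ and show it is invertible. All functors in sight preserve colimits in each variable separately:
\begin{itemize}
\item $p_!$ is a left adjoint;
\item the Day convolution on $\mathbf{Set}^{\mathbf{D}_r^{op}}$ and on $\mathbf{Set}^{R^{op}}$ is cocontinuous in each variable, by its coend formula.
\end{itemize}
Since every presheaf is canonically a colimit of representables, it suffices to construct $\mu$ naturally on representables $K = \mathbf{D}_r^{a}$, $L = \mathbf{D}_r^{b}$ and check invertibility there.

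\textbf{The representable case.} Using co-Yoneda, the Day convolution of representables is representable: $\mathbf{D}_r^{a} \otimes \mathbf{D}_r^{b} \cong \mathbf{D}_r^{a \otimes b}$, and likewise in $R$. Also $p_!$ sends the representable $\mathbf{D}_r^{a}$ to $R^{p(a)}$, again by Yoneda. The comparison on representables is then
$$p_!(\mathbf{D}_r^{a\otimes b}) \cong R^{p(a \otimes b)} \cong R^{p(a)\otimes p(b)} \cong R^{p(a)} \otimes R^{p(b)} \cong p_!\mathbf{D}_r^{a} \otimes p_!\mathbf{D}_r^{b},$$
where the middle isomorphism is the strong monoidal structure of $p$. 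The unit isomorphism $p_!(\mathbf{D}_r^{I}) \cong R^{p(I)} \cong R^{I}$ is obtained in the same way.

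\textbf{Naturality and extension.} Naturality in maps between representables follows from naturality of the structure isomorphisms of $p$. Hence $\mu$ extends uniquely to a natural isomorphism on all presheaves by cocontinuity. The associativity and unit coherence diagrams reduce, again by cocontinuity and density of representables, to the corresponding diagrams on representables. These are the images under Yoneda of the coherence axioms of the strong monoidal functor $p$.

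\textbf{Main obstacle.} The only step needing care is confirming that $p$ is genuinely strong monoidal, which the paper asserts after the definition. On free isomorphisms, $p$ sends $w \otimes w'$ to $p(w) \otimes p(w')$ via the monoidal structure of $R_0 \to R$. With that granted, the rest is the formal Day convolution argument: $p_! \dashv p^*$, and $p_!$ is strong monoidal whenever $p$ is.
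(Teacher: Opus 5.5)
Your proof is correct. It rests on the same two inputs as the paper's proof, namely strong monoidality of $p$ and (co-)Yoneda, but the packaging differs.

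The paper works globally with profunctors. It writes $p_!(K\otimes L)$ as the composite of the representable profunctors $Hom(p,*)$ and $Hom(\otimes,*)$ with the external product $K\overline{\otimes}L$. It then trades $Hom(p,*)$ followed by $Hom(\otimes,*)$ for $Hom(\otimes,*)$ followed by $Hom(p\times p,*)$, using $p\circ\otimes\cong\otimes\circ(p\times p)$. In coend terms, the two sides reduce to $\int^{d_1,d_2} R(r,p(d_1\otimes d_2))\times K(d_1)\times L(d_2)$ and the same expression with $p d_1\otimes p d_2$ in place of $p(d_1\otimes d_2)$. So the isomorphism is produced for all $K,L$ at once, with no extension step.

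You instead define the comparison on pairs of representables and extend it by separate cocontinuity in each variable. This needs the standard fact that natural transformations between such bifunctors correspond exactly to natural transformations between their restrictions to $\mathbf{D}_r\times\mathbf{D}_r$. What your route buys is that the same density argument also handles the unit isomorphism and the associativity and unit coherence conditions, reducing them to the coherence axioms of $p$. The paper's proof leaves these implicit under its appeal to functoriality.
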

 
\begin{proof}
   In terms of profunctors, presheaves $K$ and $L$ on $\mathbf{D}_r$ yield a profunctor $K \overline{\otimes} L : \mathbf{D}_r^{op} \times \mathbf{D}_r^{op} \to *$ (corresponding to the external tensor product). The Day convolution process then corresponds to precomposition with the representable profunctor $\mathbf{D}_r^{op}(\otimes, *) : \mathbf{D}_r^{op} \to \mathbf{D}_r^{op} \times \mathbf{D}_r^{op}$. In particular, we have that $p_!(K \otimes L)$ corresponds to the top composite in the diagram below,

\[\begin{tikzcd}[ampersand replacement=\&]
	{R^{op} } \&\& {\mathbf{D}_r^{op} } \&\& {\mathbf{D}_r^{op} \times \mathbf{D}_r^{op}} \&\& {*} \\
	\\
	\&\& {R^{op}  \times R^{op} }
	\arrow["{Hom(p, *)}", from=1-1, to=1-3]
	\arrow["{Hom(\otimes, *)}"', from=1-1, to=3-3]
	\arrow["{Hom(\otimes, *)}", from=1-3, to=1-5]
	\arrow["{K \overline{\otimes} L}", from=1-5, to=1-7]
	\arrow["{Hom(p \times p, *)}"', from=3-3, to=1-5]
\end{tikzcd}\]
which, by functoriality, coincides with $p_!(K) \otimes p_!(L)$.
\end{proof}
 
As a corollary, the adjunction $p_! \vdash p^*$ is a monoidal adjunction, where $p^*$ inherits the structure of a lax monoidal functor.

\begin{lemma}
\label[lemma]{change_enrichment}
 Let $\mathcal{T}$ be an $R$-tribe. Then $\mathcal{T}$ is also canonically enriched over $\mathbf{Set}^{\mathbf{D}_r^{op}}$, making it a $\mathbf{D}_r$-tribe.
\end{lemma}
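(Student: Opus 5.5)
We transport the enrichment along the lax monoidal right adjoint $p^*$ and then verify the three conditions of \cref{semicubical_tribe} for the resulting $\mathbf{D}_r$-enriched tribe.

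\textbf{Enrichment and cotensors.} By \cref{strong_lan}, $p_! \dashv p^*$ is a monoidal adjunction, so $p^*$ is lax monoidal. Change of base along $p^*$ turns the $\mathbf{Set}^{R^{op}}$-enriched category $\mathcal{T}$ into a $\mathbf{Set}^{\mathbf{D}_r^{op}}$-enriched category, with hom-objects $p^*\underline{Hom}(a,b)$.

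For cotensors, given a finite presheaf $K$ on $\mathbf{D}_r$, I would set $a^K := a^{p_!K}$. Two points must be checked.
\begin{itemize}
\item $p_!K$ is a finite presheaf on $R$: $p_!$ preserves colimits and sends representables to representables ($p_!\mathbf{D}_r^{d} = R^{p(d)}$), so a finite colimit of representables goes to one.
\item The universal property holds. We have $Hom(K, p^*\underline{Hom}(z,a)) \cong Hom(p_!K, \underline{Hom}(z,a)) \cong \mathcal{T}(z, a^{p_!K})$. The enriched version follows in the same way, using that $p_!$ is strong monoidal, so that $p_!(K\otimes L)\cong p_!K\otimes p_!L$.
\end{itemize}

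\textbf{The three axioms.}

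The second axiom is immediate. For $f : d \to d'$ in $\mathbf{D}_r$, the map $p_!Hom(-,f)$ is $Hom(-,p(f))$, and $a^{Hom(-,p(f))}$ is a weak equivalence by the second axiom for $R$-tribes.

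The third axiom is also immediate: for finite $K$, $p^K$ is $p^{p_!K}$, which is anodyne whenever $p$ is.

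The first axiom needs $p_!$ to preserve monomorphisms between finite presheaves. This is the main obstacle, because left Kan extension does not preserve monos in general.

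The first thing I would try uses \cref{mono_reedy} with $J=\mathbf{D}_r$: monomorphisms are generated, as a saturated class, by the boundary inclusions $\partial\mathbf{D}_r^{d} \to \mathbf{D}_r^{d}$. The gap-map condition is stable under pushouts, transfinite composites and retracts, since these give pullbacks, limits of towers and retracts of fibrations and trivial fibrations in $\mathcal{T}$. For finite presheaves, only finite cell complexes (and retracts) are needed, via the skeletal filtration of \cref{skeletal_filtration}, in the same way as in the proof of \cref{cotensor_reedy_cubes_gap}. It therefore suffices to check the generators.

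So it suffices to show that $p_!(\partial\mathbf{D}_r^{d}) \to R^{p(d)}$ is a monomorphism. Writing $d$ as $x \to z \to y$, I expect the latching object of $d$ to map onto the subpresheaf of $R^{p(d)}$ consisting of non-invertible maps into $p(d)$. This expectation rests on two facts:
\begin{itemize}
\item every non-invertible arrow of $R$ lifts through $r$ up to isomorphism;
\item the free isomorphisms in $\mathbf{D}_r$ account for the automorphism action.
\end{itemize}
That subpresheaf is a sieve, hence a subobject, and $p_!$ of the generator would be the boundary inclusion of $R$, which is mono because $R$ is EZ.

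If verifying that $p_!$ sends generators to monos turns out to fail, the fallback is to weaken the requirement. One would then check directly that the gap map for $p_!i$ is still a (trivial) fibration, using only that $p_!i$ is a relative cell complex built from the $R$-boundary inclusions, which the $R$-tribe axiom handles.
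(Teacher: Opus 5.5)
Your set-up matches the paper: change of base along the lax monoidal $p^*$, cotensors $a^K:=a^{p_!K}$ (forced by the adjunction), finiteness of $p_!K$, and the second and third axioms. The gap is in the first axiom, exactly where you located the difficulty. The expectation that $p_!$ sends $\partial\mathbf{D}_r^{d}\to\mathbf{D}_r^{d}$ to an $R$-boundary inclusion is false; the image need not lie among non-invertible maps, and the map need not even be monic.

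Take $R=\square^s_\sharp$ and let $d=[\rho]$ be the object of $\mathbf{D}_r$ given by $\mathrm{id}_{I^1}$ followed by the free reversal. The monoid $\mathrm{End}(I^1)$ in $\mathbf{F}_\simeq(R)$ is free on $[\rho]$. So the only non-identity maps into $d$ are the two twisted-arrow morphisms $\mathrm{id}_{I^1}\to[\rho]$, namely $(u,v)=([\rho],\mathrm{id})$ and $(\mathrm{id},[\rho])$, with $p$-images $\mathrm{id}$ and $\rho$. Moreover, $\mathrm{id}_{I^1}$ itself receives no non-identity map. Hence $\partial\mathbf{D}_r^{d}\cong\mathbf{D}_r^{\mathrm{id}_{I^1}}\amalg\mathbf{D}_r^{\mathrm{id}_{I^1}}$, and $p_!$ of the inclusion is $(Hom(-,\mathrm{id}),Hom(-,\rho))\colon R^{I^1}\amalg R^{I^1}\to R^{I^1}$. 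This map is two-to-one and hits invertible maps. These two morphisms are precisely what makes $p^*$ fully faithful, so the free isomorphisms cause the failure rather than cure it.

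It is not only about automorphisms. Take $d=\iota_0(\delta_0)\colon I^0\to I^1$, which receives one map from $\mathrm{id}_{I^0}$ and one from $\mathrm{id}_{I^1}$. You get $(Hom(-,\delta_0),Hom(-,\mathrm{id}))\colon R^{I^0}\amalg R^{I^1}\to R^{I^1}$, which is again not monic.

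Your fallback cannot rescue this. Here $p_!i$ is not a monomorphism, so it is not a cell complex of $R$-boundaries. In fact the axiom itself fails for this $i$. For the fibration $a\to 1$ the gap map is $\langle 1,\rho^*\rangle\colon a^{I^1}\to a^{I^1}\times a^{I^1}$. Composing with the automorphism $1\times\rho^*$ turns it into the diagonal of $a^{I^1}\simeq a$, which is not a fibration in general; for Kan complexes, only discrete objects have a fibrant diagonal.

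The paper's proof does not supply the missing step either. It argues that $(\mathrm{id}_r,\mathrm{id}_r)$ is initial in $r\downarrow p$, so that $p$ is flat and $p_!$ is left exact. But in $I^1\downarrow p$ the object $(\mathrm{id}_{I^1},\rho)$ receives no map from $(\mathrm{id}_{I^1},\mathrm{id})$, because the only endomorphism of $\mathrm{id}_{I^1}$ in $\mathbf{D}_r$ is the identity. The example above confirms that $p_!$ is not left exact. With this change-of-base enrichment, the first axiom can only be obtained for those monomorphisms $i$ for which $p_!i$ remains monic, and the generating boundary inclusions are not among them.
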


\begin{proof}
 By \cref{strong_lan}, the left Kan extension functor $$p_! : \mathbf{Set}^{\mathbf{D}_r^{op}} \to \mathbf{Set}^{R^{op}}$$ is strong monoidal with respect to the monoidal structure obtained by Day convolution, hence the precomposition functor $p^*$ is lax monoidal.
 
 We rely on the change-of-enrichment provided by $p^*$ (i.e., we define the hom-object of $\mathcal{T}$ to be the image under $p^*$ of the hom-object for the original enrichment over $\mathbf{Set}^{R^{op}}$).
 
 Suppose $K$ is a finite presheaf over $\mathbf{D}_r$. It follows from the pointwise formula for computing the left Kan extension functor $p_!$ that $p_! K$ is a finite presheaf over $R$, and the following natural isomorphisms prove the existence of cotensors by $K$:
 
 \begin{align*}  
 Hom_{\mathbf{Set}^{\mathbf{D}_r^{op}}}(K,p^* Hom_\mathcal{T}(x,y)) &\simeq &Hom_{\mathbf{Set}^{R^{op}}}(p_! K,Hom_\mathcal{T}(x,y))\\
 & \simeq &Hom_\mathcal{T}(x,y^{p_! K})
 \end{align*}
 
 Similarly, observe that any monomorphism $K \to L$ between finite presheaves over $\mathbf{D}_r$ is mapped by $p_!$ to a monomorphism between finite presheaves over $R$. Indeed, for every object $r \in R$, the comma category $r \downarrow p$, as in the following diagram,

\[\begin{tikzcd}[ampersand replacement=\&]
	{r \downarrow p} \&\& {\mathbf{D}_r} \\
	\\
	{*} \&\& R
	\arrow[from=1-1, to=1-3]
	\arrow[from=1-1, to=3-1]
	\arrow["p", from=1-3, to=3-3]
	\arrow[between={0.3}{0.7}, Rightarrow, from=3-1, to=1-3]
	\arrow["r"', from=3-1, to=3-3]
\end{tikzcd}\]

has an initial object (namely the object given by the identity arrow $id_r : r \to r$, thought of as an object of $\mathbf{D}_r$, together with $id_r$ as the component $p(id_r) \to r$). This means, in particular, that the category $r \downarrow p$ is cofiltered, so that the functor $p$ is (representably) flat; thus, the left Kan extension functor $$p_! : \mathbf{Set}^{\mathbf{D}_r^{op}} \to \mathbf{Set}^{R^{op}}$$
preserves finite limits. In particular, $p_!$ preserves monomorphisms, as required.

 Therefore, it follows that the cotensor by such morphisms is a fibration in $\mathcal{T}$. Finally, since $p_!$ maps representable presheaves to representable ones, the cotensor by any representable monomorphism $\mathbf{D}_r^x \to \mathbf{D}_r^y$ is always a weak equivalence (hence a trivial fibration). This concludes the proof that $\mathcal{T}$ is canonically a $\mathbf{Set}^{\mathbf{D}_r^{op}}$-tribe. 
\end{proof}

\begin{corollary}
\label[corollary]{rcan_frame}
 If $\mathcal{T}$ is an $R$-tribe, then there is a "canonical frame" tribe morphism $$c : \mathcal{T} \to R\text{-Fr}(\mathcal{T})$$ mapping $x \in \mathcal{T}$ to $r \mapsto x^{R^r}$. 
\end{corollary}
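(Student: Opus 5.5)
The plan is to reduce to the strictly direct case, \cref{jcan_frame}, through \cref{change_enrichment}. Viewing the $R$-tribe $\mathcal{T}$ as a $\mathbf{D}_r$-tribe via the change of enrichment along the lax monoidal functor $p^*$, \cref{jcan_frame} gives a tribe morphism
$$c' : \mathcal{T} \to \mathbf{D}_r\text{-Fr}(\mathcal{T}), \qquad x \mapsto \big(d \mapsto x^{\mathbf{D}_r^d}\big).$$
By the proof of \cref{change_enrichment}, the $\mathbf{D}_r$-cotensor $x^{\mathbf{D}_r^d}$ is the $R$-cotensor $x^{p_!\mathbf{D}_r^d}$, and $p_!\mathbf{D}_r^d = R^{p(d)}$ since left Kan extension sends representables to representables. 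So $c'(x) \cong p^*(r \mapsto x^{R^r})$, naturally in $x$.

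Define $c(x) := (r \mapsto x^{R^r})$. This is a functor $R^{op} \to \mathcal{T}$ by functoriality of cotensors in the weight and the Yoneda embedding, and it is functorial in $x$. By the identification above, $p^* c(x)$ is isomorphic to a $\mathbf{D}_r$-frame. By the definition of $R$-frames, $c(x)$ is therefore an $R$-frame, so $c$ lands in $R\text{-Fr}(\mathcal{T})$. Moreover $p^* \circ c \cong c'$.

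Next I will verify the tribe morphism properties.

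\begin{itemize}
\item \emph{Fibrations.} A map $c(f)$ is a fibration in $R\text{-Fr}(\mathcal{T})$ exactly when $p^* c(f) \cong c'(f)$ is a Reedy fibration. This holds when $f$ is a fibration, because $c'$ preserves fibrations.
\item \emph{Terminal object and pullbacks along fibrations.} Cotensors are limits, so $c$ preserves these. Such limits in $R\text{-Fr}(\mathcal{T})$ are computed pointwise in $\mathcal{T}^{R^{op}}$, as in the tribe structure of \cref{R_tribe} inherited from \cite[Theorem 2.3]{cherradi2026generalized}.
\item \emph{Anodyne maps.} The tribe structure on $R\text{-Fr}(\mathcal{T})$ is created from $\mathbf{D}_r$-frames via $p^*$. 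The anodyne maps, i.e.\ those with the left lifting property against $p$-fibrations, should be exactly those $w$ with $p^*w$ anodyne. Alternatively, I will use that pointwise anodyne maps are anodyne, as invoked via \cite[Lemma 2.22]{ks2019internal} in \cref{R_tribe}. Under that characterization, $c(w)$ is anodyne because $c'(w)$ is anodyne, or directly because each $w^{R^r}$ is anodyne by the third axiom of an $R$-tribe.
\end{itemize}

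I expect the anodyne step to be the main obstacle. The cleanest route is the pointwise one: each component $w^{R^r}$ is anodyne in $\mathcal{T}$. What remains is to show that a map between $R$-frames which is pointwise anodyne is anodyne in $R\text{-Fr}(\mathcal{T})$. For this I would check that $p^*$ of such a map is pointwise anodyne between Reedy fibrant $\mathbf{D}_r$-diagrams, hence anodyne there by \cite[Lemma 2.22]{ks2019internal}. Lifting problems against a $p$-fibration $q$ can then be transported through $p^*$: one solves the lift for $p^*q$ in $\mathcal{T}^{\mathbf{D}_r^{op}}_f$ and descends it along the fully faithful $p^*$, using the absolute density of $p$.
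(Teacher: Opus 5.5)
Your argument is correct, but it runs the factorization in the opposite direction from the paper. The paper writes $c$ as the composite $p_* \circ c'$. It then composes two tribe morphisms: $c'$ from \cref{jcan_frame}, and the right Kan extension $p_*$. The latter is asserted to restrict to a tribe morphism $\mathbf{D}_r\text{-Fr}(\mathcal{T}) \to R\text{-Fr}(\mathcal{T})$ on the strength of the non-homotopical statement \cite[Proposition 2.2]{cherradi2026generalized}.

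You instead use $p^* \circ c \cong c'$ and exploit that the whole structure of $R\text{-Fr}(\mathcal{T})$ is defined through $p^*$. So $c(x)$ is a frame and $c(f)$ is a fibration by definition, limits are pointwise, and anodyne maps are reflected because lifts against $p^* q$ descend along the fully faithful $p^*$.

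The paper's route is two lines, but it leans on an external result about $p_*$: that the right Kan extension exists on fibrant diagrams and preserves frames, fibrations and anodyne maps. Even the commutativity of its triangle, $p_* p^* c \cong c$, silently uses that $p^*$ is fully faithful. Your route avoids right Kan extensions altogether and is self-contained given \cref{change_enrichment} and absolute density.

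Two points to tidy up:
\begin{itemize}
\item Full faithfulness of $p^* : \mathcal{T}^{R^{op}} \to \mathcal{T}^{\mathbf{D}_r^{op}}$ is not literally the stated absolute density lemma, which concerns $\mathbf{Set}$-valued functors, so derive it. One way is to apply $\mathcal{T}(z,-)$ levelwise, descend, and use uniqueness of the descended maps to get naturality in $z$. Another is to note that such $p$ are exactly the lax epimorphisms in $\mathbf{Cat}$, a notion stable under passing to opposites.
\item Your second anodyne argument, via pointwise anodyne maps and \cite[Lemma 2.22]{ks2019internal}, is superfluous. Once lifts descend, the fact that $c'(w)$ is anodyne in $\mathbf{D}_r\text{-Fr}(\mathcal{T})$ already suffices.
\end{itemize}
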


\begin{proof}
 Observe that $c$ factors through the "canonical frame" functor $$c' : \mathcal{T} \to \mathcal{T}^{\mathbf{D}_r^{op}}$$ resulting from the $\mathbf{D}_r$-tribe structure on $\mathcal{T}$ (by \cref{change_enrichment}). Precisely, we have a commutative triangle 
\[\begin{tikzcd}[ampersand replacement=\&]
	{\mathcal{T}} \&\&\&\& {R\text{-Fr}(\mathcal{T})} \\
	\\
	\&\& {\mathbf{D}_r\text{-Fr}(\mathcal{T})}
	\arrow["c", from=1-1, to=1-5]
	\arrow["{c'}"', from=1-1, to=3-3]
	\arrow["{p_*}"', from=3-3, to=1-5]
\end{tikzcd}\]
and we conclude from the fact that $c'$ is a tribe morphism (by \cref{jcan_frame}), but also that $p_*$ restricts to a tribe morphism between tribes of frames (this follows directly from the non-homotopical version, established in \cite[Proposition 2.2]{cherradi2026generalized}).
\end{proof}

\section{The evaluation functor}
 
 When $R$ has a unique object of degree $0$, we have a canonical evaluation functor: $$R\text{-Fr}(\mathcal{T}) \to \mathcal{T}$$
 We would like to provide sufficient conditions under which this functor establishes a DK-equivalence. Under a contractibility assumption, we first get the following result, analogous to \cite[Proposition 3.9]{schwede2013p}:
 
 \begin{lemma}
 \label[lemma]{cone_frame}
 Suppose that $(\mathbf{D}_r)_{\leq n}$ has a contractible nerve for every natural number $n$. Also assume that $\mathbf{D}_r$ has only one object, denoted by $0$, with codomain an object of $R$ of degree $0$ (this is true if $R$ has only one object of degree $0$ admitting no automorphism).
  Consider a $\mathbf{D}_r$-frame $F$ in $\mathcal{T}$ and a morphism $x \to F_0$. Then, the latter can be extended to a cone $\Delta x \to F$.
 \end{lemma}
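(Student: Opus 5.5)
We build the cone degree by degree along the direct structure of $\mathbf{D}_r$, in the spirit of \cite[Proposition 3.9]{schwede2013p}. Write $\Delta x$ for the constant $\mathbf{D}_r^{op}$-diagram at $x$, and $D_{<k}$ for the full subcategory of $\mathbf{D}_r$ on the objects of degree less than $k$. A cone $\Delta x \to F$ restricted to $D_{<k}$ is a compatible family of maps $x \to F_d$. Such a family amounts to a map $x \to F^{K}$, where $K = \operatorname{colim}_{D_{<k}} \ast$ is the weighted-limit presheaf $\mathbf{colim}_{d\in D_{<k}} \mathbf{D}_r^{d}$. At the single object $0$ of degree $0$ the cone is the given map $x \to F_0$. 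The inductive step is as follows. Suppose the cone has been defined on all objects of degree less than $k$, and let $d$ have degree $k$. The already constructed components give a map $x \to M_d F$, which is compatible because it is assembled from the components on $\partial \mathbf{D}_r^d$. Extending the cone to $d$ then amounts to lifting this map along the matching map $F_d \to M_d F$. Since we have no reason to expect $x \to M_d F$ to factor through $F_d$ on the nose, the contractibility hypothesis is what must make the lift exist.

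To obtain the lift, I would first replace $x$ and the given map by an equivalent situation. Pass to the canonical model and work up to homotopy. By \cref{cotensor_reedy_cubes}, the object $F^{\partial \mathbf{D}_r^d}$ is the limit $M_dF$, and the map $F_d \to M_dF$ is a fibration. The key claim is that this fibration is in fact a trivial fibration onto the relevant component, or, better, that the map $F^{(\mathbf{D}_r)_{\leq n}\text{-cone}} \to F_0$ is a trivial fibration. Concretely, let $C_n$ denote the presheaf $\mathbf{colim}_{(\mathbf{D}_r)_{\leq n}}$ of representables, i.e.\ the terminal presheaf restricted to $(\mathbf{D}_r)_{\leq n}$. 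I would show that $F^{C_n} \to F^{\mathbf{D}_r^0} = F_0$ is a trivial fibration. It is a fibration by \cref{cotensor_reedy_cubes_gap} applied to the monomorphism $\mathbf{D}_r^0 \to C_n$, with $F' = \ast$. It is a weak equivalence because $F$ is homotopically constant, so the limit of $F$ over $(\mathbf{D}_r)_{\leq n}^{op}$ computes the homotopy limit. That homotopy limit is $F_0$ by contractibility of the nerve of $(\mathbf{D}_r)_{\leq n}$, by the same exactness argument as in \cref{constant_diagrams}. Trivial fibrations in a tribe have sections (lift against $\emptyset$-free anodyne data: any map into the base lifts through a trivial fibration, since trivial fibrations between fibrant objects admit sections and are stable under pullback). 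Pulling back along $x \to F_0$ therefore gives a map $x \to F^{C_n}$ extending the given one, that is, a cone on $(\mathbf{D}_r)_{\leq n}$.

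To pass to all of $\mathbf{D}_r$, I would make these choices compatible across $n$. I would do this inductively, using the relative version: the restriction $F^{C_{n+1}} \to F^{C_n}$ is a fibration by \cref{cotensor_reedy_cubes_gap}. It is also a weak equivalence by two-out-of-three, since both objects are equivalent to $F_0$ compatibly. So at each stage we lift the previously chosen map $x \to F^{C_n}$ through the trivial fibration $F^{C_{n+1}} \to F^{C_n}$. The resulting compatible family defines the cone $\Delta x \to F$, because a cone is determined degreewise and each object lies in some $(\mathbf{D}_r)_{\leq n}$. The main obstacle is the weak-equivalence claim $F^{C_n} \simeq F_0$. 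It requires two things: that the strict weighted limit over a possibly infinite (but degreewise finite) category $(\mathbf{D}_r)_{\leq n}$ is built by finitely many pullbacks of fibrations, so that it exists and is homotopically meaningful (the skeletal filtration of \cref{skeletal_filtration}); and that for a Reedy fibrant homotopically constant diagram over a category with contractible nerve, this limit is equivalent to any value. For the latter, I would argue via $\mathbf{Ho}_\infty(\mathcal{T})$ as in \cref{constant_diagrams}, or directly by induction on cells, comparing with the trivial case of a category with an initial object.
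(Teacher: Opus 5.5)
Your final argument is correct and is essentially the paper's proof. You induct along the filtration $(\mathbf{D}_r)_{\leq n}$, show that the restriction $F^{C_{n+1}} \to F^{C_n}$ between limits is a trivial fibration (both are limits of homotopically constant Reedy fibrant diagrams of contractible finite shape), and extend the cone through a section. The one difference is that you get the fibration property from \cref{cotensor_reedy_cubes_gap}, applied to $C_n \hookrightarrow C_{n+1}$ and $F \to \ast$, where the paper cites Radulescu-Banu's theorem on $(q_n)_*$. You should delete the opening detour, though: the matching maps $F_d \to M_d F$ are not trivial fibrations in general. For example, $\mathrm{id}_{I^1}$ is a minimal object of $\mathbf{D}_{r_\square}$, so its matching object is terminal. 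Nothing in your argument needs that claim.
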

 
 \begin{proof}
  We will inductively extend the cone to the full subcategories $\mathbf{D}_{R,\leq n}$ of $\mathbf{D}_r$, which are finite categories with contractible nerves. For $n = 0$, $\mathbf{D}_{R,\leq n}$ has only one element $0$ by assumption, and we already have a cone as part of the starting input data.

  Now, consider the functor $q_n$ from $\mathbf{D}_{R,\leq n+1}$ to the walking arrow category $\rightarrow$, mapping $\mathbf{D}_{R,\leq n}$ to the source, and mapping the remaining objects in $\mathbf{D}_{R,\leq n+1}$ to the target. We have the following comma square (which is also a pullback),
\[\begin{tikzcd}[ampersand replacement=\&]
	{\mathbf{D}_{\square^s_\sharp,\leq n}} \&\& {\mathbf{D}_{\square^s_\sharp,\leq n+1}} \\
	\\
	{*} \&\& \rightarrow
	\arrow[from=1-1, to=1-3]
	\arrow[from=1-1, to=3-1]
	\arrow["id"{description}, between={0.3}{0.7}, Rightarrow, from=1-3, to=3-1]
	\arrow["{q_n}", from=1-3, to=3-3]
	\arrow["s"', from=3-1, to=3-3]
\end{tikzcd}\]
so we can conclude, applying \cite[Theorem 9.4.3 (2)(b)]{radulescu2006cofibrations}, because $F$ is fibrant (hence so is its restriction to $\mathbf{D}_{R,\leq n+1}$), that the diagram $(q_n)^{op}_* F$ is a fibrant diagram (i.e., a fibration) with source the limit of the restriction of $F$ to $\mathbf{D}_{R,\leq n+1}$, and with target the limit of its restriction to $\mathbf{D}_{R,\leq n}$. This arrow is not only a fibration but moreover also a weak equivalence since both corresponding diagrams are homotopically constant and of contractible shape. As a trivial fibration in a tribe, it admits a section by \cite[Corollary 3.4.7]{joyal2017notes}. This section allows us to extend the cone from $x$ at stage $n$ to stage $n+1$, concluding the proof.
 \end{proof}
 
 The following result adapts Theorem 3.10 of \cite{schwede2013p}:
 \begin{proposition}
 \label[proposition]{ev0_funct}
  Under the assumptions of \cref{cone_frame}, and provided that $R$ has only one object $0$ of degree $0$, the functor $$\mathbf{ev}_0 : R\text{-Fr}(\mathcal{T}) \to \mathcal{T}$$
  mapping a frame $F$ to its value at $0$ is a DK-equivalence.
 \end{proposition}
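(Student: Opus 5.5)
We follow the pattern of \cite[Theorem 3.10]{schwede2013p}. We use Waldhausen's approximation properties for an exact functor between fibration categories. $\mathbf{ev}_0$ is exact: it preserves finite limits, since these are computed pointwise in $R\text{-Fr}(\mathcal{T})$. It preserves fibrations, since a $p$-fibration $F \to F'$ gives a Reedy fibration $p^*F \to p^*F'$, whose component at the object $0$ of $\mathbf{D}_r$ lying over $0 \in R$ is $F_0 \to F'_0$. It preserves weak equivalences, since these are levelwise.

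It therefore suffices to check the two approximation properties:
\begin{itemize}
\item[(AP1)] $\mathbf{ev}_0$ reflects weak equivalences;
\item[(AP2)] every morphism $x \to F_0$ in $\mathcal{T}$, with $F$ a frame, factors up to weak equivalence through $\mathbf{ev}_0$ of a morphism of frames.
\end{itemize}

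\textbf{(AP1).} Let $f : F \to F'$ be a map of frames with $f_0$ a weak equivalence. Every object $r$ of $R$ receives a map from $0$: this holds for the unique degree $0$ object in a generalized direct category with finitely many objects per degree, and if needed I would take it as part of the hypotheses via connectedness of $\mathbf{D}_r$. Both frames are homotopically constant, so the structure maps $F_r \to F_0$ and $F'_r \to F'_0$ are weak equivalences. By 2-out-of-3 applied to the naturality square, $f_r$ is a weak equivalence.

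\textbf{(AP2).} Given a frame $F$ and a map $g : x \to F_0$, I would:
\begin{enumerate}
\item Factor $g$ in $\mathcal{T}$ (or first replace $x$ fibrantly; every object is fibrant in a tribe) and reduce to constructing a frame $X$ with $X_0 \simeq x$ and a frame map $X \to F$ extending $g$ up to weak equivalence.
\item Apply \cref{cone_frame} to the $\mathbf{D}_r$-frame $p^*F$ and the map $x \to (p^*F)_0 = F_0$, obtaining a cone $\Delta x \to p^*F$.
\item Reedy-factor this cone in $\mathcal{T}^{\mathbf{D}_r^{op}}$ as a Reedy weak equivalence $\Delta x \to Y$ followed by a Reedy fibration $Y \to p^*F$. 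Then $Y$ is Reedy fibrant and levelwise equivalent to a constant diagram, hence a $\mathbf{D}_r$-frame.
\item Transport $Y$ back to $R$ by setting $X := p_* Y$. By \cref{constant_diagrams}, $X$ is homotopically constant. By the (non-homotopical) results of \cite{cherradi2026generalized}, $p_*$ preserves fibrant objects and fibrations, so $X$ is an $R$-frame.
\item Use the counit $p^*p_* \to \mathrm{id}$, which is an isomorphism on the image of $p^*$ by absolute density, to obtain the composite $X = p_*Y \to p_*p^*F \cong F$.
\item Check that $X_0 \to F_0$ agrees with $Y_0 \to F_0$ up to the weak equivalence $x \to Y_0$.
\end{enumerate}

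The main obstacle is the identification in step~6. We need $(p_*Y)_0 \to Y_0$ to be a weak equivalence, i.e. that the right Kan extension at $0$ does not alter the homotopy type. The fibre of $p$ over $0$ is the single object $0$, but $p_*$ is computed over the comma category $0\downarrow p$. I would argue, as in the proof of \cref{constant_diagrams}, that this comma category has an initial object, so $(p_*Y)_0 \cong Y_0$. Failing that, the homotopy limit of a homotopically constant Reedy fibrant diagram over a contractible finite stage is $Y_0$, which gives the weak equivalence needed to conclude.
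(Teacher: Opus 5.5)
Your proposal is correct and follows the paper's proof essentially step by step. The shared structure is: the approximation properties, (AP1) from homotopical constancy of frames, and (AP2) by applying \cref{cone_frame} to $p^*F$, Reedy-factoring the cone as $\Delta x \xrightarrow{\sim} Y \to p^*F$, and passing to $p_*Y \to p_*p^*F \cong F$. The key identification is also the paper's: the comma category computing $(p_*Y)_0$ is the terminal category, so $(p_*Y)_0 \cong Y_0$. This holds because any map $p(d) \to 0$ is invertible by minimality of degree, and $\mathbf{D}_r$ has a single object over degree $0$.

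The one slip is in (AP1). A unique degree-$0$ object need not map to every object of a generalized direct category, and connectedness of $R$ (inherited from that of $\mathbf{D}_r$) only gives zigzags from $0$. Your two-out-of-three argument propagates along zigzags equally well, however, and that is exactly how the paper concludes.
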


 \begin{proof}
  Since $R\text{-Fr}(\mathcal{T})$ and $\mathcal{T}$ are, in particular, fibration categories, we can use the characterization of DK-equivalences between fibration categories by means of the approximation property introduced by Cisinski in \cite{cisinski2010categories}. The first condition $(AP1)$ is easily checked: the assumption on $\mathbf{D}_r$ (contractibility of $(\mathbf{D}_r)_{\leq n}$) implies connectedness of $\mathbf{D}_r$, and thus of $R$, so that a map in $R\text{-Fr}(\mathcal{T})$ is a pointwise weak equivalence if and only if its restriction to the object $0$ is one. For the second, consider an arrow $x \to F_0 = \mathbf{ev}_0 F$, where $F$ is a frame. We use \cref{cone_frame} to provide a cone $\Delta x \to p^* F$. We can now take a factorization of $\Delta x \to p^* F$ as a weak equivalence followed by a Reedy fibration $F' \to p^* F$. This provides a diagram

\[\begin{tikzcd}[ampersand replacement=\&]
	x \&\& {\mathbf{ev}_0 F} \\
	\\
	x \&\& {\mathbf{ev}_0 (p_* F')}
	\arrow[from=1-1, to=1-3]
	\arrow["{id_x}", from=3-1, to=1-1]
	\arrow["\sim"{description}, from=3-1, to=3-3]
	\arrow[from=3-3, to=1-3]
\end{tikzcd}\]
where the map $x \to \mathbf{ev}_0 (p_* F')$ is a weak equivalence because it is obtained by restriction: in the corresponding computation for pointwise Kan extensions, following from the exact square below,
\[\begin{tikzcd}[ampersand replacement=\&]
	{p \downarrow 0} \&\& {\mathbf{D}_r} \\
	\\
	{*} \&\& R
	\arrow[from=1-1, to=1-3]
	\arrow[from=1-1, to=3-1]
	\arrow[between={0.3}{0.7}, Rightarrow, from=1-3, to=3-1]
	\arrow["p", from=1-3, to=3-3]
	\arrow["0"', from=3-1, to=3-3]
\end{tikzcd}\]
the category $p \downarrow 0$ is the terminal category.

This proves that $(AP2)$ also holds.
 \end{proof}
 
 \begin{remark}
  The previous result states that a subcategory of homotopical diagrams of shape $R$ in the $(\infty,1)$-category $\mathbf{Ho}_\infty (\mathcal{T})$ presented by $\mathcal{T}$ is equivalent to $\mathbf{Ho}_\infty (\mathcal{T})$. For the subcategory of all homotopical diagrams, this should reasonably hold (for arbitrary $\mathcal{T}$) only when $R$ has a contractible nerve. In particular, $R\text{-Fr}(\mathcal{T})$ need not present the $(\infty,1)$-category $\mathbf{Ho}_\infty (\mathcal{T})^R$. This is the case for semi-cubical frames, discussed in the last section, since the category of semi-cubes we consider does not have a contractible nerve.
 \end{remark}
 
 \section{The \texorpdfstring{$R$}{\textbf{R}}-tribe of \texorpdfstring{$R$}{\textbf{R}}-frames}
 
  In this section, we aim at providing a suitable enrichment to our tribe of $R$-frames, so as to make it an $R$-tribe.
 
  For $K$ a finite presheaf on $R$, and $F$ an $R$-frame, we also write $F^K$ for a representing object of the functor $\mathcal{T}^{op} \to \mathbf{Set}$ mapping $z$ to $Hom_{\mathbf{Set}^{R^{op}}}(K,\mathcal{T}(z,F))$, provided such an object exists.
 
 The results from Section 2 are easily transferred to this generalized setting.
 
 \begin{lemma}
\label[lemma]{g_cotensor_reedy_cubes}
 Consider a tribe $\mathcal{T}$ and an $R$-frame $F$ in $\mathcal{T}$.
 For any finite presheaf $K \in  \mathbf{Set}^{R^{op}}$, $F^K$ exists in $\mathcal{T}$. Moreover, the functor $F \mapsto F^K$ takes any  $p$-fibration (resp. trivial $p$-fibration) between frames to a fibration (resp. a trivial fibration), and it takes anodyne maps to anodyne maps.
\end{lemma}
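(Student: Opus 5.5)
The plan is to reduce the statement to \cref{cotensor_reedy_cubes}, applied to the $\mathbf{D}_r$-frame $p^*F$, by transporting the weight along the adjunction $p_! \dashv p^*$. For a finite presheaf $K$ on $R$, the goal is a finite presheaf $K'$ on $\mathbf{D}_r$ together with a natural isomorphism
\[
Hom_{\mathbf{Set}^{R^{op}}}(K,\mathcal{T}(z,F)) \simeq Hom_{\mathbf{Set}^{\mathbf{D}_r^{op}}}(K',\mathcal{T}(z,p^*F)),
\]
natural in $z$ and in $F$. Then $F^K$ can be taken to be $(p^*F)^{K'}$. Since $\mathcal{T}(z,p^*F) = p^*\mathcal{T}(z,F)$, the adjunction gives $Hom(K',p^*\mathcal{T}(z,F)) \simeq Hom(p_!K',\mathcal{T}(z,F))$. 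So it suffices to write $K \simeq p_!K'$ with $K'$ finite.

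To construct $K'$, first treat representables. For $r \in R$, $p$ sends the object $id_r$ of $\mathbf{D}_r$ to $r$, and $p_!$ preserves representables, so $p_!\mathbf{D}_r^{id_r} \simeq R^r$. A finite presheaf $K$ is a finite colimit of representables $R^{r_i}$. Since $p_!$ preserves colimits, it is enough to realize each connecting map $R^{r_i}\to R^{r_j}$ as $p_!$ of a map between finite presheaves on $\mathbf{D}_r$. Here I would use the lifting hypothesis: every arrow $f$ of $R$ lifts, up to isomorphism, to an arrow of $R_0$, and hence gives a map in $\mathbf{D}_r$ between suitable objects whose images under $p$ are the prescribed ones (composing with the free isomorphisms available in $\mathbf{D}_r$). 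Full faithfulness of $p^*$ from absolute density then shows that $p_!$ is essentially surjective enough on finite colimits of representables. Alternatively, and more cleanly, one can define $K' := $ the finite presheaf generated by the cells of $p^*K$ of bounded degree, and check that $p_!K' \simeq K$ via the counit, using density.

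For the fibration statements, note that a $p$-fibration (resp.\ trivial $p$-fibration, anodyne map) $F\to F'$ is sent by $p^*$ to a Reedy fibration (resp.\ Reedy trivial fibration, anodyne map) of $\mathbf{D}_r$-frames. For anodyne maps this uses that $p^*$ is a tribe morphism; this follows from the tribe structure of \cref{R_tribe}, whose anodyne maps are the pointwise ones, together with \cite[Lemma 2.22]{ks2019internal}. The map $F^K\to F'^K$ is identified with $(p^*F)^{K'}\to(p^*F')^{K'}$, and \cref{cotensor_reedy_cubes} then gives all three conclusions.

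The main obstacle is showing that every finite $K$ is of the form $p_!K'$ with $K'$ \emph{finite}. The difficulty is that $p^*K$ itself is typically infinite-ish, or at least has extra free-isomorphism cells. I would first try the counit $p_!p^*K\to K$, which is an isomorphism because $p^*$ is fully faithful. I would then restrict to the finite subpresheaf $K'\subseteq p^*K$ generated by lifts of the finitely many nondegenerate cells of $K$, and verify that $p_!K'\to K$ remains an isomorphism. This uses that $p_!$ preserves monomorphisms, as shown in the proof of \cref{change_enrichment}, together with a surjectivity check on cells, which relies on the finitely many isomorphisms per degree.
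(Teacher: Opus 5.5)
Your reduction is the paper's. Choosing $K'$ with $p_!K'\cong K$ and using $p_!\dashv p^*$ to set $F^K:=(p^*F)^{K'}$ is exactly what the paper does with $K'=p^*K$: its isomorphism $Hom(K,\mathcal{T}(z,F))\simeq Hom(p^*K,p^*\mathcal{T}(z,F))$ is full faithfulness of $p^*$, i.e.\ your counit isomorphism $p_!p^*K\cong K$.

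The gap is in the step you call the main obstacle. You set $p^*K$ aside as possibly infinite and commit instead to the subpresheaf $K'\subseteq p^*K$ generated by lifts of the nondegenerate cells of $K$. That choice fails. Suppose the lifts sit at the objects $\mathrm{id}_y$, as your use of $p_!\mathbf{D}_r^{\mathrm{id}_r}\simeq R^r$ suggests. A morphism $d\to\mathrm{id}_y$ in $\mathbf{D}_r\subseteq\mathbf{Tw}(\mathbf{F}_\simeq(R))$ is a factorization of $\mathrm{id}_y$ in $\mathbf{F}_\simeq(R)$. Such a factorization is trivial, since free isomorphisms have no inverses there and arrows coming from $R_0$ raise degree. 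So the lifted cells generate nothing further: $K'\cong\coprod_a\mathbf{D}_r^{\mathrm{id}_{y_a}}$, hence $p_!K'\cong\coprod_a R^{y_a}$, and the induced map to $K$ just classifies the chosen cells. This map is not injective as soon as two chosen cells share a face, e.g.\ for $K=R^{I^1}\amalg_{R^{I^0}}R^{I^1}$ in $\square^s_\sharp$. It is not even injective for $K=R^{I^1}$ if you lift every nondegenerate cell, since without degeneracies both $\mathrm{id}$ and $\rho$ qualify. The face relations of $K$ are recorded by the cells of $p^*K$ at objects $x\to y$ with $x\neq y$. For instance, $\delta_0:I^0\to I^1$ receives maps from both $\mathrm{id}_{I^0}$ and $\mathrm{id}_{I^1}$. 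Your $K'$ discards exactly those cells. Preservation of monomorphisms by $p_!$, borrowed from \cref{change_enrichment}, cannot rescue this: up to the counit isomorphism, $p_!$ of the inclusion $K'\hookrightarrow p^*K$ is this very non-injective map.

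The repair is to drop the detour, because $p^*K$ is already finite. A finite $K$ has finitely many cells, all over objects of degree at most some $N$. The finiteness hypotheses in force make $(\mathbf{D}_r)_{\le N}$ finite, as is the case for $\Delta_{a,\sharp}\to\square^s_\sharp$. So only finitely many objects of $\mathbf{D}_r$ lie over objects of degree $\le N$, and $(p^*K)(d)=K(p(d))$. Hence $p^*K$ has finitely many elements. Since $\mathbf{D}_r$ is direct with finite representables, $p^*K$ is a finite presheaf by \cref{skeletal_filtration}. In fact, the alternative you mention, the presheaf generated by the cells of $p^*K$ of bounded degree, is all of $p^*K$. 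Then $F^K=(p^*F)^{p^*K}$. Your final step, applying \cref{cotensor_reedy_cubes} to the $p^*$-image of a $p$-fibration, trivial $p$-fibration or anodyne map, then finishes the proof as in the paper.
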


\begin{proof}
 We have a natural isomorphism
 $$Hom_{\mathbf{Set}^{R^{op}}}(K,\mathcal{T}(z,F)) \simeq Hom_{\mathbf{Set}^{\mathbf{D}_r^{op}}}(p^*K,p^*\mathcal{T}(z,F))$$
 so that the existence of the representing object, which coincides with $(p^* F)^{p^* K}$, follows from \cref{cotensor_reedy_cubes}, as well as the stated properties (observing that $p^*$ preserves monomorphisms).
\end{proof}

\begin{lemma}
 \label[lemma]{g_cotensor_reedy_cubes_gap}
  For any monomorphism $i : K \to L$ between finite presheaves on $R$, and any fibration $q : F \to F'$ between $R$-frames, the gap map in the diagram below

\[\begin{tikzcd}[ampersand replacement=\&]
	{F^L} \\
	\& {F'^L \times_{F'^K} F^K} \&\& {F^K} \\
	\\
	\& {F'^L} \&\& {F'^K}
	\arrow[dashed, from=1-1, to=2-2]
	\arrow[curve={height=-12pt}, from=1-1, to=2-4]
	\arrow[curve={height=12pt}, from=1-1, to=4-2]
	\arrow[from=2-2, to=2-4]
	\arrow[from=2-2, to=4-2]
	\arrow["\ulcorner"{anchor=center, pos=0.125}, draw=none, from=2-2, to=4-4]
	\arrow[from=2-4, to=4-4]
	\arrow[from=4-2, to=4-4]
\end{tikzcd}\]
  is a fibration in $\mathcal{T}$, that is moreover a trivial fibration whenever $q$ is so.
 \end{lemma}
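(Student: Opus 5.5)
The plan is to reduce the statement to the direct case, \cref{cotensor_reedy_cubes_gap}, along the restriction functor $p^*$, in the same way \cref{g_cotensor_reedy_cubes} reduced to \cref{cotensor_reedy_cubes}. The basic observation is that for any finite presheaf $K$ on $R$ and any $R$-frame $F$ we have a natural isomorphism $F^K \simeq (p^*F)^{p^*K}$. This follows from the isomorphism
$$Hom_{\mathbf{Set}^{R^{op}}}(K,\mathcal{T}(z,F)) \simeq Hom_{\mathbf{Set}^{\mathbf{D}_r^{op}}}(p^*K,p^*\mathcal{T}(z,F)),$$
which holds because $p^*$ is fully faithful ($p$ is absolutely dense), together with the identification $p^*\mathcal{T}(z,F) = \mathcal{T}(z,p^*F)$. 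The isomorphism is natural in $F$ and in $K$, so it is compatible with the maps induced by $i : K \to L$ and $q : F \to F'$.

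With this in hand, I would rewrite the whole cospan $F'^L \to F'^K \leftarrow F^K$, together with the map from $F^L$, as the corresponding cospan for $p^*q : p^*F \to p^*F'$ and $p^*i : p^*K \to p^*L$. The gap map in the statement is then isomorphic to the gap map
$$(p^*F)^{p^*L} \to (p^*F')^{p^*L} \times_{(p^*F')^{p^*K}} (p^*F)^{p^*K}.$$
I would then check that the hypotheses of \cref{cotensor_reedy_cubes_gap} hold for $J = \mathbf{D}_r$:
\begin{itemize}
\item $p^*q$ is a Reedy fibration between $\mathbf{D}_r$-frames, by the definitions of $R$-frames and $p$-fibrations.
\item $p^*q$ is a Reedy trivial fibration whenever $q$ is a trivial fibration in $R\text{-Fr}(\mathcal{T})$, because weak equivalences are pointwise and $p$ is surjective on objects.
\item $p^*i$ is a monomorphism, since restriction preserves monomorphisms.
\end{itemize}
Applying \cref{cotensor_reedy_cubes_gap} then gives the conclusion, including the trivial-fibration case.

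The step I expect to be the main obstacle is checking that $p^*K$ and $p^*L$ are \emph{finite} presheaves on $\mathbf{D}_r$, since \cref{cotensor_reedy_cubes_gap} is stated only for finite ones. The same issue is implicit in \cref{g_cotensor_reedy_cubes}. I would argue cell by cell. For a representable $R^r$, $(p^*R^r)(d) = R(p(d), r)$, and a cell of $p^*R^r$ is a pair consisting of an object of $\mathbf{D}_r$ over some $r'$ and a map $r' \to r$. Every such cell should come from a cell of a representable of $\mathbf{D}_r$ whose codomain lies over $r$, using the lifting property of $R_0 \to R$ up to isomorphism and the free isomorphisms built into $\mathbf{D}_r$. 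Since $R$ has finitely many objects and isomorphisms in each degree, only finitely many nondegenerate cells should arise, so $p^*R^r$ is a finite colimit of representables. For general finite $K$, I would induct along the skeletal filtration of \cref{skeletal_filtration}, using that $p^*$ preserves colimits, in particular the pushouts of boundary inclusions.

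If finiteness of $p^*K$ turns out to be delicate, the fallback is to bypass it and redo the induction of \cref{cotensor_reedy_cubes_gap} directly on $R$. I would attach one orbit of cells $R^x \to L$ at a time and use that the relevant boundary-to-cell map is a fibration, trivial when $q$ is. This in turn follows from \cref{g_cotensor_reedy_cubes} applied to the sub-presheaves involved.
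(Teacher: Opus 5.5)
Your proposal is correct and is the paper's argument: the paper's proof also identifies $F^K$ with $(p^*F)^{p^*K}$ through full faithfulness of $p^*$, uses that $p^*$ preserves monomorphisms and sends (trivial) fibrations of $R$-frames to Reedy (trivial) fibrations of $\mathbf{D}_r$-frames, and then concludes by \cref{cotensor_reedy_cubes_gap}. The finiteness of $p^*K$ that you flag is left implicit in the paper, both here and in \cref{g_cotensor_reedy_cubes}; your counting approach settles it, but it needs one more input than the finiteness of objects and isomorphisms in each degree of $R$, namely that only finitely many objects of $\mathbf{D}_r$ lie over each object of $R$, which holds for $r_\square$ because $\Delta_{a,\sharp}$ has finitely many maps into each object and each cube of $\square^s_\sharp$ has finitely many automorphisms.
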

 
 \begin{proof}
  Just like in the proof of \cref{g_cotensor_reedy_cubes}, this follows from the corresponding property for $p^*F \to p^*F'$ since $p^*$ maps monomorphisms to monomorphisms, which is provided by \cref{cotensor_reedy_cubes_gap}.
 \end{proof}
 
 We may define an enrichment of $R\text{-Fr}(\mathcal{T})$ in $\mathbf{Set}^{R^{op}}$ constructed as for \cref{J_frames_th}. Namely, we define a "cotensor" $K \triangleright F$ of a frame $F$ by a presheaf $K \in \mathbf{Set}^{R^{op}}$ by the formula $$(K \triangleright F)_x := F^{R^x \otimes K}$$
 and define an enrichment by $$\underline{Hom}_{\text{Fr}(\mathcal{T})}(F,F')_x : = Hom(F, R^x \triangleright F')$$
 
 \begin{theorem}
 \label[theorem]{R_frames_th}
  Equipped with the previous enrichment over $\mathbf{Set}^{R^{op}}$, the category $R\text{-Fr}(\mathcal{T})$ of $R$-frames in $\mathcal{T}$ admits cotensors by finite presheaves on $R$. Moreover, the cotensors satisfy the required properties for $R\text{-Fr}(\mathcal{T})$ to be an $R$-tribe.
 \end{theorem}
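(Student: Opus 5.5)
The plan is to mimic the proof of \cref{J_frames_th}, reducing everything to the $\mathbf{D}_r$-level results through $p^*$, exactly as in \cref{g_cotensor_reedy_cubes} and \cref{g_cotensor_reedy_cubes_gap}.

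\emph{Step 1: the enrichment.} We first check that $K \triangleright F$ is again an $R$-frame. For $K$ finite, each $R^x \otimes K$ is finite, because the Day convolution of representables is representable and Day convolution commutes with finite colimits in each variable. So \cref{g_cotensor_reedy_cubes} guarantees that each $(K\triangleright F)_x = F^{R^x\otimes K}$ exists.

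\emph{Step 2: $K \triangleright F$ is an $R$-frame.} Homotopical constancy should come from the fact that, for $f : x \to y$ in $R$, the induced map $F^{R^y\otimes K} \to F^{R^x\otimes K}$ is a weak equivalence.
\begin{itemize}
\item We first treat $K$ representable, $K = R^z$. Here the map is $F^{R^{y\otimes z}} \to F^{R^{x\otimes z}}$, which is a weak equivalence because $F$ is homotopical.
\item We then pass to general finite $K$ by induction along the skeletal filtration, pulled back along $p$ via $p^*$ so that we work with a strictly direct shape. Each inductive step is a pullback along a fibration, using \cref{g_cotensor_reedy_cubes_gap}, and such pullbacks preserve weak equivalences between fibrations in a tribe.
\end{itemize}
For $p$-fibrancy we must show that $p^*(K\triangleright F)$ is Reedy fibrant over $\mathbf{D}_r$. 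Its matching object at $d$ is a cotensor of $F$ by $p_!(\partial\mathbf{D}_r^d)\otimes K$. The matching map is then the cotensor of $F$ by the monomorphism $p_!\partial\mathbf{D}_r^d\otimes K \to R^{p(d)}\otimes K$, so it is a fibration by \cref{g_cotensor_reedy_cubes_gap}. Here we use that $p_!$ preserves monos (shown in the proof of \cref{change_enrichment}) and that $-\otimes K$ preserves monos; the latter I would check on the generating boundary inclusions.

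\emph{Step 3: enrichment and cotensor adjunction.} The identification $\underline{Hom}(F, K\triangleright F') \cong \underline{Hom}(F,F')^K$ (internal hom of presheaves) follows formally from $F^{L\otimes K}\cong (F^{L})^{K}$. This isomorphism in turn comes from the universal property of $F^K$ as a weighted limit together with associativity of Day convolution. Composition and units are defined from the monoidal structure on $R$, just as for $J$-frames.

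\emph{Step 4: the three axioms of \cref{semicubical_tribe}.}
\begin{itemize}
\item The first axiom (pullback-cotensor) for $L\triangleright F \to (K\triangleright F)\times_{K\triangleright F'} (L\triangleright F')$ must hold as a $p$-fibration. Checking the Reedy condition objectwise over $\mathbf{D}_r$ reduces it to the gap map of \cref{g_cotensor_reedy_cubes_gap}, applied to the pushout-product monomorphism built from $p_!\partial\mathbf{D}_r^d\to R^{p(d)}$ and $K\to L$. The trivial case follows in the same way.
\item The second axiom follows from Step 2 applied with $K$ representable: the map $R^x\triangleright F \to R^y \triangleright F$ is a levelwise weak equivalence.
\item The third axiom follows from \cref{g_cotensor_reedy_cubes}, since anodyne maps are preserved levelwise by $F\mapsto F^{R^x\otimes K}$ and anodyne maps between frames are the pointwise ones.
\end{itemize}

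\emph{Main obstacle.} I expect the hardest step to be the mono-preservation of pushout-products under Day convolution over the non-direct category $R$. This is needed in both Step 2 and Step 4. I would first try to prove it by transporting along $p_!$: since $p_!$ is strong monoidal and preserves monos, it suffices to verify it on $\mathbf{D}_r$, or even on $R_0$. The difficulty is that $p_!$ is not essentially surjective onto finite presheaves on $R$, so I would also need to express each finite presheaf on $R$ as a filtered union of images under $p_!$.
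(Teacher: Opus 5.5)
\textbf{Verdict: same route as the paper, but with a genuine gap at the step you flag yourself; the paper leaves that step unproved too.}

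The paper's proof is a one-line appeal to \cref{g_cotensor_reedy_cubes}, \cref{g_cotensor_reedy_cubes_gap}, homotopicality of frames and \cref{R_tribe}. Your Steps 1--4 are what that appeal unpacks to: the matching object of $p^*(K\triangleright F)$ at $d$ is $F^{p_!\partial\mathbf{D}_r^d\otimes K}$, the relative matching maps are gap maps of pushout-products, and homotopical constancy follows by a gluing-lemma induction.

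The open step is this. Every use of \cref{g_cotensor_reedy_cubes_gap} needs the Day-convolution pushout-product of two monomorphisms of presheaves on $R$ to be a monomorphism; the paper assumes this without saying so. It does not follow from the standing hypotheses on $R$. Take $R=R_0=\{0<1\}$ with $\otimes=\max$. Then $R^1\otimes R^1=R^1$, so tensoring the mono $R^0\to R^1$ with $L=R^1\cup_{R^0}R^1$ gives the fold map $L\to R^1$, which is not injective. So the property must either be added as a hypothesis or checked in the example.

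Your plan for closing it rests on a false premise: $p_!$ \emph{is} essentially surjective.
\begin{itemize}
\item Absolute density is a self-dual condition, since $\int^{d}R(b',pd)\times R(pd,b)\cong R(b',b)$ is symmetric in $b,b'$. Hence $p^*\colon\mathbf{Set}^{R^{op}}\to\mathbf{Set}^{\mathbf{D}_r^{op}}$ is fully faithful, which is exactly the isomorphism used in the proof of \cref{g_cotensor_reedy_cubes}. The counit $p_!p^*K\to K$ is therefore invertible.
\item Your own Step 2 induction along the cells of $p^*K$ already relies on this.
\item So no filtered-union argument is needed. Since $p^*$ preserves monos, and $p_!$ is strong monoidal and preserves colimits and monos, the pushout-product property on $R$ follows from the same property on $\mathbf{D}_r$. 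That property on $\mathbf{D}_r$ still has to be proved.
\item $R_0$ is out of reach: $r$ is not absolutely dense in general (for semi-cubes $r^*$ forgets the reversals), which is why $\mathbf{D}_r$ is introduced at all.
\item Checking only on $\partial R^x\to R^x$ would not suffice either. Once $R$ has automorphisms these do not generate the monos; the maps $p_!\partial\mathbf{D}_r^d\to R^{p(d)}$ do.
\end{itemize}

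For $\square^s_\sharp$ you can bypass all of this. Let $S$ be the set of coordinates of $I^n$ sent into the first factor of $I^a\otimes I^b$. Any map $I^n\to I^a\otimes I^b$ factors uniquely as the shuffle $I^n\cong I^{|S|}\otimes I^{n-|S|}$, order-preserving on $S$ and on its complement, followed by some $f_1\otimes f_2$. The coend then collapses to $(K\otimes L)_n\cong\coprod_{S\subseteq\{1,\dots,n\}}K_{|S|}\times L_{n-|S|}$. Preservation of monos and of their pushout-products is immediate levelwise. Granting this property, the rest of your argument goes through.
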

 
 \begin{proof}
 
 Using the definition above, the properties expected from the cotensor hold by \cref{g_cotensor_reedy_cubes_gap}, \cref{g_cotensor_reedy_cubes} and by the fact that a frame is a homotopical diagram.
 
 This makes $R\text{-Fr}(\mathcal{T})$ an $R$-tribe, with the tribe structure provided by \cref{R_tribe}.
 \end{proof}

\section{Semi-cubical tribes}

 We now come to the application of our work above, i.e., we consider a category of semi-cubes. 
 Explicitly, we consider the free monoidal category $(\square_\sharp, \otimes, I^0)$ generated by two face maps $\delta_0, \delta_1 : I^0 \to I^1$ with domain the monoidal unit. This is a subcategory of the cube category $\mathbb{I}$ introduced in section 4 of \cite{grandis2003cubical}, excluding degeneracies. Thus, the objects of $\square_\sharp$ are of the form $I^n := I^1 \otimes ... \otimes I^1$ and can therefore be identified with the natural numbers (we may write $[n]$ by analogy to the semi-simplex category). The morphisms are generated from the two face maps $\delta_0$ and $\delta_1$ (and the identity maps) under the monoidal product. 
 We will be interested in a symmetric version: the free symmetric monoidal category $(\square_\sharp^s, \otimes, I^0)$ generated by two face maps $\delta_0, \delta_1 : I^0 \to I^1$ together with an involution of the interval (a reversal) $\rho: I^1 \to I^1$.
 
 There is a monoidal functor $r_\square : \Delta_{a,\sharp} \to \square_\sharp^s$ from the augmented semi-simplex category to the category of symmetric cubes we consider, defined by mapping $[n] \in \Delta_{a,\sharp}$ to $I^{n+1}$, and the unique map $[-1] \to [0]$ to $\delta_0$. Thanks to the presence of reversals, this functor satisfies the condition assumed on $R$ in Section 3, as proved in \cref{surj} below. Therefore, we may apply the results from Section 3 after checking that $\mathbf{D}_{r_\square}$ has a contractible nerve.
 
\begin{lemma}
\label[lemma]{surj}
  The functor $r_\square : \Delta_{a,\sharp} \to \square_\sharp^s$ induces a functor $$r_\square^\rightarrow : \Delta_{a,\sharp}^\rightarrow \to (\square_\sharp^s)^\rightarrow$$ between the arrow categories that is essentially surjective on objects.
\end{lemma}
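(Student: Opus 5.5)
We need to show that every morphism $f : I^m \to I^n$ of $\square_\sharp^s$ is isomorphic, in the arrow category, to $r_\square(k)$ for some injective monotone map $k : [m-1] \to [n-1]$ in $\Delta_{a,\sharp}$. The plan is to put $f$ into a normal form in $\square_\sharp^s$ and then absorb every isomorphism into the two vertical isomorphisms $w$ and $w'$ of the required square.

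\textbf{Step 1: normal form.} Since $\square_\sharp^s$ is the free symmetric monoidal category on $\delta_0,\delta_1 : I^0 \to I^1$ and the involution $\rho$, every morphism $I^m \to I^n$ is a tensor product of generators precomposed and postcomposed with symmetries. Using naturality of the symmetry, every morphism can be written as
$$ f = \sigma \circ \big(\mathrm{id}_{I^m} \otimes \delta_{\epsilon_1} \otimes \dots \otimes \delta_{\epsilon_{n-m}}\big) \circ \sigma' , $$
where $\sigma$ and $\sigma'$ are isomorphisms built from permutations and reversals on the relevant factors, $\epsilon_i \in \{0,1\}$, and the generators $\rho$ appearing on the copied coordinates have been pushed into $\sigma$ or $\sigma'$. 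The isomorphisms of $\square_\sharp^s$ are exactly the hyperoctahedral groups, i.e.\ signed permutations of the coordinates. This justifies the normal form: the free symmetric monoidal category is described by the evident combinatorics, with the maps $I^0 \to I^0$ being trivial.

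\textbf{Step 2: reduce the faces.} Since $\rho\delta_1 = \delta_0$ (as $\rho$ is an involution of the interval exchanging the endpoints), we use reversals in $\sigma$ to replace every $\delta_1$ by $\delta_0$. This is the one point where I expect care to be needed. If the reversal is only an involution that is not required to swap faces, then the two faces cannot be interchanged, and we must check the intended relation $\rho\delta_0=\delta_1$. I would verify this against the paper's description of reversals "reversing the direction", and if it is indeed intended, I would state it explicitly as the defining relation. After this step, $f \cong \mathrm{id}_{I^m}\otimes \delta_0^{\otimes (n-m)}$ up to isomorphisms on both sides.

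\textbf{Step 3: match with the image of $r_\square$.} Since $r_\square$ is monoidal, injective monotone maps in $\Delta_{a,\sharp}$ are generated under the ordinal sum/monoidal product by identities and $[-1]\to[0]$. Hence $r_\square$ of the inclusion $[m-1] \to [m-1]\oplus[n-m-1] = [n-1]$ is $\mathrm{id}_{I^m}\otimes\delta_0^{\otimes(n-m)}$, possibly up to a fixed reordering of the tensor factors, which is again an isomorphism. Taking $w = \sigma'^{-1}$ and $w'$ equal to the inverse of the composite isomorphism on the target, we obtain the commutative square required. The degenerate case $m=0$, i.e.\ a vertex inclusion $I^0\to I^n$, is included, since $[-1]$ is present in the augmented category.
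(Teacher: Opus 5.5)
Your proof is correct and follows essentially the same route as the paper. Both arguments write $f$ in a normal form (a tensor product of identities and faces, up to isomorphisms), replace $\delta_1$ by $\rho\circ\delta_0$, and use that $r_\square$ is monoidal with $r_\square([-1]\to[0])=\delta_0$ to absorb the remaining isomorphisms into the vertical maps; the paper needs only a post-composed automorphism, because it leaves faces interleaved with identities and lets monoidality handle the ordering. The relation $\rho\delta_0=\delta_1$ you flag is exactly what the paper uses implicitly, and one small slip: with $f=\sigma\circ(\cdots)\circ\sigma'$, the source isomorphism $w$ in the square should be $\sigma'$, not $\sigma'^{-1}$, which is harmless.
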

 
 \begin{proof}
  First, observe that every map $f : I^n \to I^m$ in $\square_\sharp^s$ is essentially generated by the identity map $id_{I^0} : I^0 \to I^0$ and the face maps $\delta_i : I^0 \to I^1$ in the sense that $f$ factors as $w \circ f'$, where $w : I^m \to I^m$ is an automorphism, and $f' :  I^n \to I^m$ is a tensor product of $id_{I^0}$, $\delta_0$ and $\delta_1$.
 Recall that, by definition, $\square_\sharp^s$ is generated under the tensor product by $id_{I^0}$, $\delta_i$, the transposition $I^2 \to I^2$ and the reversal $I^1 \to I^1$. The observation can then be proved inductively on $m$. If $m=0$, then $n=0$ and $f$ is an identity arrow, so the result is obvious. Otherwise, either $n < m$ and $f$ factors through $I^{m-1}$, or $n = m$ and $f$ is an isomorphism. In the latter case, the result is obvious, so we are only left with the former: by assumption we have a diagram

\[\begin{tikzcd}[ampersand replacement=\&]
	{I^n} \&\& {I^{m-1}} \&\& {I^m} \\
	\&\& {I^{m-1}}
	\arrow["{f_0'}", from=1-1, to=1-3]
	\arrow["{f_0}"', from=1-1, to=2-3]
	\arrow["{f_1'}", from=1-3, to=1-5]
	\arrow["\simeq"{description}, from=1-3, to=2-3]
	\arrow["{f_1}"', from=2-3, to=1-5]
\end{tikzcd}\]
 where $f_0'$ is a tensor product of the form we want to obtain, by our inductive hypothesis, and where $f_1$ (and $f_1'$) must be a tensor of isomorphisms and exactly one face $\delta_i$. The face map can be composed with an identity arrow $I^0 \to I^0$ in the decomposition of $f_0'$ to provide an arrow $f' : I^n \to I^m$ of the form we want, and the remaining isomorphisms in the decomposition of $f_1'$ define, together with the identity arrow $I^1 \to I^1$ an automorphism $w$ of $I^m$ such that $w \circ f'$ equals $f$. This concludes the proof of our observation.
 
 Since $r_\square$ is monoidal, and leveraging our previous observation, it will be enough to check that the generators $id_{I^0}$, $\delta_0$ and $\delta_1$ are in the essential image of $r_\square^\rightarrow$ in order to conclude that every arrow of $\square_\sharp^s$ is. This is obvious for $id_{I^0}$ and $\delta_0$ (which are in the exact image of $r^\rightarrow$), and it is also the case of $\delta_1$, which can be written $\rho \circ \delta_0$, where $\rho : I^1 \to I^1$ is the reversal.  
 \end{proof}
 
 \begin{lemma}
 \label[lemma]{contractible_nerves}
  The category $\mathbf{D}_{r_\square}$ has a contractible nerve. Moreover, the full subcategories of  $\mathbf{D}_{r_\square}$ spanned by objects whose codomain is at most $n$ also have contractible nerves.
 \end{lemma}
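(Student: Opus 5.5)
Since $\mathbf{D}_{r_\square}$ is the union of the increasing chain of full subcategories $(\mathbf{D}_{r_\square})_{\leq n}$, and the nerve commutes with this filtered union (every simplex of the nerve lies in some finite stage), contractibility of $\mathbf{D}_{r_\square}$ follows from contractibility of each $(\mathbf{D}_{r_\square})_{\leq n}$. So the plan is to prove the second claim, by induction on $n$. Before that I will identify the objects concretely. An object of $\mathbf{D}_{r_\square}$ is an arrow $x \to z \to y$ of $\mathbf{F}_\simeq(\square_\sharp^s)$, where $x \to z$ is the image of a map of $\Delta_{a,\sharp}$ and $z \to y$ is an identity or a single free automorphism $w$ of $I^m$. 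Its codomain degree is $m$, and morphisms are twisted-arrow morphisms.

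The first step is to exhibit a terminal object, or more robustly a zigzag of natural transformations contracting everything. In the twisted arrow category, a morphism from $(x \to y)$ to $(x' \to y')$ consists of a factorisation $x' \to x \to y \to y'$. The object $0$, the identity of $I^0 = r_\square([-1])$, has nothing that maps into it besides itself, since $I^0$ has degree $0$. I therefore expect objects to admit maps toward identity-type objects: any object $u : x \to z \xrightarrow{w} y$ receives (or emits, depending on the twisted-arrow variance) a canonical map relating it to the object $\mathrm{id}_x : x \to x$, using the factorisation $x = x \to x \xrightarrow{wu} y$. This yields a functor $S : (\mathbf{D}_{r_\square})_{\leq n} \to (\mathbf{D}_{r_\square})_{\leq n}$, sending an object to its domain identity, together with a natural transformation between $\mathrm{id}$ and $S$. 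The image of $S$ consists of identities on objects of $\Delta_{a,\sharp}$, and $\Delta_{a,\sharp}$ has an initial object $[-1]$. Using the map $[-1] \to [k]$ (the unique arrow $\emptyset \to [k]$, sent to a composite of $\delta_0$'s), I get a second natural transformation connecting $S$ to the constant functor at $\mathrm{id}_{I^0}$, via the objects $[-1] \to [k]$ of $\mathbf{D}_{r_\square}$. Each of these intermediate objects has codomain degree at most that of the original object, so the whole construction stays inside $(\mathbf{D}_{r_\square})_{\leq n}$. Natural transformations induce homotopies on nerves, so the identity of the nerve is homotopic to a constant map, which gives contractibility.

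The main obstacle is checking that these assignments really are functorial and natural inside the pushout category $\mathbf{F}_\simeq(\square_\sharp^s)$. There, the free isomorphisms are not identified with composites of $R_0$-maps, so I must verify that for a morphism of $\mathbf{D}_{r_\square}$ the induced maps between the associated identity objects and $[-1]$-objects exist and are compatible. The delicate point is that the $R_0$-part of a morphism can involve free isomorphisms on the codomain side, and these must not obstruct the factorisation through the domain. If the direct naturality fails, my fallback is to use Quillen's Theorem A on the functor $(\mathbf{D}_{r_\square})_{\leq n} \to \Delta_{a,\sharp,\leq n}$ taking domains. Its fibers (or comma categories) should have initial or terminal objects given by identity arrows, which reduces the question to the contractibility of the truncation of $\Delta_{a,\sharp}$; that truncation is contractible because it has the initial object $[-1]$.
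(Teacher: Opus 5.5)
Your main construction fails at its first step. $S$ is not a functor, and the obstruction is variance, not the free automorphisms.

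A morphism $\alpha \to \alpha'$ of $\mathbf{D}_{r_\square}$ is a pair $(u,v)$ with $\alpha' = v\alpha u$, where $u : \mathrm{dom}\,\alpha' \to \mathrm{dom}\,\alpha$ points backwards. A morphism $\mathrm{id}_x \to \mathrm{id}_{x'}$ would therefore need $u : x' \to x$ and $v : x \to x'$ with $vu = \mathrm{id}_{x'}$, which forces $x = x'$ for degree reasons. For example, the morphism $(!, \mathrm{id}) : \mathrm{id}_{[1]} \to ([-1] \to [1])$ has no possible image under $S$. The same objection applies to your intermediate assignment $\alpha \mapsto ([-1] \to \mathrm{dom}\,\alpha)$. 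It already arises in $\mathbf{Tw}(\Delta_{a,\sharp})$, so no care with free isomorphisms will rescue it.

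The fix is to keep the codomain and collapse the domain: set $d(\alpha) := \alpha \circ ([-1] \to \mathrm{dom}\,\alpha)$. This is functorial because domain components of morphisms of $\mathbf{D}_{r_\square}$ are images of maps of $\Delta_{a,\sharp}$, in which $[-1]$ is initial. There are natural transformations $\mathrm{id} \Rightarrow d \Leftarrow \mathrm{const}_{\mathrm{id}_{I^0}}$ with components $(!, \mathrm{id})$ and $(\mathrm{id}, d(\alpha))$, and none of them raises codomain degree. This is exactly the zigzag in the last step of the paper's proof. The paper first passes to the subcategory without free automorphisms, through a homotopy final and a homotopy initial inclusion. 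As far as I can check, though, the zigzag is already well defined and natural on all of $(\mathbf{D}_{r_\square})_{\leq n}$.

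Your fallback has the right idea, but its stated justification is wrong.

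\begin{itemize}
\item The domain functor is contravariant: $\pi : (\mathbf{D}_{r_\square})_{\leq n} \to \Delta_{a,\sharp,\leq n}^{op}$. This is harmless for nerves.
\item Identity arrows are indeed initial in the fibres of $\pi$. But Quillen's Theorem A is about comma categories, and there $(\mathrm{id}_x, \mathrm{id}_x)$ is neither initial nor terminal. In $\pi \downarrow x$, whose objects are pairs $(\alpha, g : x \to \mathrm{dom}\,\alpha)$, a map from it to $(\alpha, g)$ forces $g$ to be split monic, hence an identity. A map into it forces $x$ to be a retract of the codomain of $\alpha$.
\end{itemize}

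What is missing is the observation that $\pi$ is a Grothendieck opfibration. Precomposing with a map of $\Delta_{a,\sharp}$ stays inside $(\mathbf{D}_{r_\square})_{\leq n}$ and gives opcartesian lifts. So the fibre inclusion $\pi^{-1}(x) \hookrightarrow \pi \downarrow x$ has the left adjoint $(\alpha, g) \mapsto \alpha g$. Each comma category is then contractible, because the fibre has the initial object $\mathrm{id}_x$.

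With this added, Theorem A gives $N((\mathbf{D}_{r_\square})_{\leq n}) \simeq N(\Delta_{a,\sharp,\leq n})$, which is contractible since $[-1]$ is initial. The first claim then follows from your filtered-union reduction. This is a valid alternative to the paper's argument: it replaces the explicit zigzag and the two cofinality reductions with a single application of Theorem A.
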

 
 \begin{proof}
Consider the full subcategory $\iota_0 : \mathbf{D}_{r_\square,0} \subset \mathbf{D}_{r_\square}$ spanned by the arrows $\alpha : n \to m$ such that either $n=[-1]$ or there are no "free" automorphisms at the end. We claim that $\iota_0$ is homotopy final.

Consider a comma square
\[\begin{tikzcd}[ampersand replacement=\&]
	{\alpha \downarrow \iota_0} \&\& {\mathbf{D}_{\square^s_\sharp,0}} \\
	\\
	{*} \&\& {\mathbf{D}_{\square^s_\sharp}}
	\arrow[from=1-1, to=1-3]
	\arrow[from=1-1, to=3-1]
	\arrow["{\iota_0}", from=1-3, to=3-3]
	\arrow[between={0.3}{0.7}, Rightarrow, from=3-1, to=1-3]
	\arrow["\alpha"', from=3-1, to=3-3]
\end{tikzcd}\]
where $\alpha$ is of the form $\alpha_0 : n \to m$ followed optionally by a "free" automorphism $[w] : m \to m$. 

If $\alpha$ is in $\mathbf{D}_{r_\square,0}$, then $\alpha \downarrow \iota_0$ trivially has an initial object (given by the identity map $\alpha \to \alpha$), in particular $\alpha \downarrow \iota_0$ is contractible. Otherwise, $n$ is strictly greater than $-1$, and there is a "free" automorphism at the end. In this case,  $\alpha \downarrow \iota_0$ also has an initial object, namely the arrow $[-1] \to m \to m$ (ending with $[w]$), so it is also contractible.
This proves the claim.

Now consider the full subcategory $\iota_1 : \mathbf{D}_{r_\square,1} \to \mathbf{D}_{r_\square,0}$ spanned by the arrows containing no "free" automorphisms. This time, we claim that $\iota_1$ is homotopy initial.

Consider a comma square
\[\begin{tikzcd}[ampersand replacement=\&]
	{\iota_1 \downarrow \alpha} \&\& {\mathbf{D}_{r_\square,1}} \\
	\\
	{*} \&\& {\mathbf{D}_{r_\square,0}}
	\arrow[from=1-1, to=1-3]
	\arrow[from=1-1, to=3-1]
	\arrow[between={0.3}{0.7}, Rightarrow, nfold, from=1-3, to=3-1]
	\arrow["{\iota_1}", from=1-3, to=3-3]
	\arrow["\alpha"', from=3-1, to=3-3]
\end{tikzcd}\]
where, again, $\alpha$ is of the form $\alpha_0 : n \to m$ followed optionally by a "free" automorphism $[w] : m \to m$. 

As before, we may assume that $\alpha$ is not in $\mathbf{D}_{r_\square,1}$. This means that we can assume $\alpha$ to be of the form $[-1] \to [m] \to [m]$ for a "free" automorphism $[w] : [m] \to [m]$. This time, $\iota_1 \downarrow \alpha$ clearly has a terminal object, given by the unique factorization of $[-1] \to [m]$ through $\alpha$. Indeed, since $[-1]$ has no "free" automorphism, $m$ must be strictly greater than $[-1]$ and the free automorphism $[w]$ must appear in the bottom component of the factorization:
\[\begin{tikzcd}[ampersand replacement=\&]
	{[-1]} \&\& {[-1]} \\
	\&\& {[m]} \\
	{[m]} \&\& {[m]}
	\arrow["{\alpha_0}"', from=1-1, to=3-1]
	\arrow[from=1-3, to=1-1]
	\arrow["{\alpha_0}", from=1-3, to=2-3]
	\arrow["{[w]}", from=2-3, to=3-3]
	\arrow["{[w]}"', from=3-1, to=3-3]
\end{tikzcd}\]

Therefore, $\iota_1 \downarrow \alpha$ is contractible, which proves our second claim.
In particular, $\iota_0$ and $\iota_1$ induce weak equivalences of simplicial sets at the level of the nerves. To conclude that $\mathbf{D}_{r_\square}$ has a contractible nerve, it is thus enough to prove that the nerve of $\mathbf{D}_{r_\square,1}$ is contractible.

We define a functor $d : \mathbf{D}_{r_\square,1} \to \mathbf{D}_{r_\square,1}$ mapping an arrow $[m] \to [n]$ to $[-1] \to [n]$ (precomposing it with $[-1] \to [m]$). There is a natural transformation $id_{\mathbf{D}_{r_\square}} \to d$ whose components are given by the twisted squares below:
\[\begin{tikzcd}[ampersand replacement=\&]
	{[m]} \&\& {[-1]} \\
	\\
	{[n]} \&\& {[n]}
	\arrow[from=1-1, to=3-1]
	\arrow[from=1-3, to=1-1]
	\arrow[from=1-3, to=3-3]
	\arrow[from=3-1, to=3-3]
\end{tikzcd}\]
 
 Write $c : \mathbf{D}_{r_\square,1} \to \mathbf{D}_{r_\square,1}$ for the constant functor with value the arrow $[-1] \to [-1]$. There is a natural transformation 
 $c \to d$ defined by the squares of the following form: 

\[\begin{tikzcd}[ampersand replacement=\&]
	{[-1]} \&\& {[-1]} \\
	\\
	{[n]} \&\& {[-1]}
	\arrow[from=1-1, to=1-3]
	\arrow[from=1-1, to=3-1]
	\arrow[from=1-3, to=3-3]
	\arrow[from=3-3, to=3-1]
\end{tikzcd}\]
Overall, this provides a zig-zag between the identity functor on $\mathbf{D}_{r_\square,1}$ and a constant functor, so that the nerve of $\mathbf{D}_{r_\square,1}$ is contractible, concluding the proof that $\mathbf{D}_{r_\square}$ has a contractible nerve.

For the second assertion, it is enough to observe that the functors and natural transformations constructed above restrict suitably (i.e., they give rise to functors between the considered full subcategories and corresponding natural transformations).
\end{proof}

Write $\text{scFr}(\mathcal{T})$ for the category $\square^s_\sharp\text{-Fr}(\mathcal{T})$.
To sum up, we obtain the following result by specializing those of the previous sections:

\begin{theorem}
  The category $\text{scFr}(\mathcal{T})$ of semi-cubical frames in $\mathcal{T}$ enjoys the structure of a $\square^s_\sharp$-tribe, and the evaluation functor $$\mathbf{ev}_0 : \text{scFr}(\mathcal{T}) \to \mathcal{T}$$ establishes a DK-equivalence.
  
  Moreover, if $\mathcal{T}$ is a $\pi$-tribe, then so is $\text{scFr}(\mathcal{T})$.
 \end{theorem}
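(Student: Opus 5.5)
The theorem is a specialization of the general results of Sections 4–7 to $R = \square^s_\sharp$ with $R_0 = \Delta_{a,\sharp}$ and $r = r_\square$. The plan is to check the standing hypotheses on $R$ and then invoke, in order, \cref{R_frames_th}, \cref{ev0_funct} and \cref{pitribe_frames}.

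First, the hypotheses of Section 3. The category $\square^s_\sharp$ is generalized direct with $\mathbf{deg}(I^n) = n$. Indeed, every map $I^n \to I^m$ has $n \le m$, and it is an isomorphism exactly when $n = m$, since the maps are then generated by symmetries and reversals. It is symmetric monoidal, and each degree contains one object with finitely many automorphisms (the hyperoctahedral group). The augmented semi-simplex category $\Delta_{a,\sharp}$ is strictly direct. It is monoidal under ordinal sum (join), with $[n]\star[m] = [n+m+1]$ and unit $[-1]$, and $r_\square$ is strong monoidal with respect to this product: it sends $[n] \mapsto I^{n+1}$, so it sends $[-1]$ to the unit $I^0$ and $[n]\star[m]$ to $I^{n+m+2} \cong I^{n+1} \otimes I^{m+1}$. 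It is degree-preserving once degrees are shifted by one. The lifting condition, essential surjectivity of $r_\square^\rightarrow$, is \cref{surj}. The degree-finiteness assumption of \cref{jframes_pi} holds for $R$, and hence for $\mathbf{D}_{r_\square}$: the arrows landing in a fixed degree are finite in number, and so are the free isomorphisms. With these checks, \cref{R_tribe} and \cref{R_frames_th} give $\text{scFr}(\mathcal{T})$ the structure of a $\square^s_\sharp$-tribe.

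Second, the DK-equivalence, via \cref{ev0_funct}. This needs three hypotheses.
\begin{itemize}
\item The subcategories $(\mathbf{D}_{r_\square})_{\leq n}$ must have contractible nerves. This is \cref{contractible_nerves}.
\item $R$ must have a unique object of degree $0$. This is $I^0$.
\item $\mathbf{D}_{r_\square}$ must have exactly one object whose codomain has degree $0$. The only candidate is $\mathrm{id}_{[-1]}$, because $I^0$ is the monoidal unit and has no nontrivial automorphisms, so no free isomorphism can be appended to it. The parenthetical criterion in \cref{cone_frame} therefore applies.
\end{itemize}
\Cref{cone_frame} then holds, and \cref{ev0_funct} shows that $\mathbf{ev}_0$ is a DK-equivalence.

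Third, the $\pi$-tribe statement follows from \cref{pitribe_frames}, which needs the degree-finiteness hypotheses of \cref{jframes_pi}. These were checked in the first step.

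The main obstacle I expect is the bookkeeping around degrees and the monoidal structure. Degrees in $\Delta_{a,\sharp}$ start at $-1$, while those in $\square^s_\sharp$ start at $0$, so "degree-preserving" has to be read as preserving degree up to this shift. Likewise, strong monoidality of $r_\square$ depends on the choice of product on $\Delta_{a,\sharp}$, which I fix to be ordinal sum as above. I would also confirm that the transfer results of Section 7 do not rely on $R$ having finitely many objects overall; they only use finiteness of the degree-wise truncations, which holds here.
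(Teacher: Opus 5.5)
Your proposal is correct and follows the paper's proof. Like the paper, you verify the standing hypotheses via \cref{surj}, \cref{contractible_nerves} and the degree-wise finiteness of $\square^s_\sharp$, and then invoke \cref{R_tribe}, \cref{R_frames_th}, \cref{ev0_funct} and \cref{pitribe_frames}. You also spell out several checks the paper leaves implicit: the degree shift for $r_\square$, ordinal sum as the monoidal structure on $\Delta_{a,\sharp}$, and the uniqueness of the object of $\mathbf{D}_{r_\square}$ over degree $0$ (since $I^0$ has no nontrivial automorphisms).
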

 
 \begin{proof}
  We know from \cref{surj} that the functor $r_\square : \Delta_{a,\sharp} \to \square_\sharp^s$ satisfies the expected condition required to apply the construction of \cref{def_dr}. The finiteness condition on $R := \square_\sharp^s$ is clearly satisfied, and the contractibility assumption that we use to establish the result for the evaluation functor is supplied by \cref{contractible_nerves}. 
  
  Therefore, the tribe structure on $\text{scFr}(\mathcal{T})$ follows from \cref{R_tribe}, and \cref{R_frames_th} further proves that it is a semi-cubical tribe structure. The assertion about the evaluation functor is an instance of \cref{ev0_funct}. Finally, the additional statement for $\pi$-tribes follows from \cref{pitribe_frames} (the category $\square^s_\sharp$ satisfies the required assumptions).
 \end{proof}

\newpage

\printbibliography

\end{document}